\documentclass[11pt,letterpaper,notitlepage,twoside]{article}

%Apparence du document
\usepackage{titling}
\usepackage{enumitem}
\usepackage{titlesec}

\usepackage[top=4cm, bottom=3.5cm, left=3.8cm, right=3.8cm, columnsep=20pt]{geometry}
\setlength{\headheight}{13.6pt}

\setlength{\parskip}{5pt}
\setlength{\parindent}{0pt}

\usepackage[dvipsnames]{xcolor}
\definecolor{FrenchPink}{RGB}{255,118,164}
\definecolor{VividMulberry}{RGB}{192,17,215}
\definecolor{DenimBlue}{RGB}{47,60,190}
\usepackage{hyperref}
\hypersetup{
	colorlinks=true,     %Colore le texte plutôt que créer une boîte
	linkcolor=DenimBlue,    %Couleur des liens internes normaux
	citecolor=FrenchPink,     %Couleur des citations bibliographiques
	filecolor=VividMulberry,      %Couleur des URL locaux
	urlcolor=VividMulberry,       %Couleur des URL externes
}
\urlstyle{same}

\usepackage{fancyhdr}
\pagestyle{fancy}
\fancyhead[C]{}
\fancyhead[RO,LE]{\thepage}
\fancyhead[LO,RE]{}
\fancyfoot[C]{}

\fancypagestyle{first}{
	\pagestyle{fancy}
	\fancyhead[C]{}
	\fancyhead[RO,LE]{}
	\fancyhead[LO,RE]{}
	\fancyfoot[C]{}
	\fancyfoot[L]{{\footnotesize Latest update: \today }}
	
	}

%Symboles
\usepackage{amsmath,tabu}
\usepackage{amsthm}
\usepackage{amssymb}
\usepackage{mathtools}
\usepackage{dsfont}
\usepackage{etoolbox}
\newcommand{\rom}[1]{\uppercase\expandafter{\romannumeral #1\relax}}

%Tableaux et figures
\usepackage{tabularx}
\usepackage{graphicx}
\usepackage{caption}
\usepackage{subcaption}
\usepackage{float}
\usepackage{booktabs}
\usepackage{wrapfig}
\usepackage{tikz}
\usepackage{tikz-cd}

% Font
\usepackage{soul}

%Théorèmes
\newtheoremstyle{lem}{}{}{\slshape}{}{\bfseries}{}{.5em}{}
\theoremstyle{lem}
\newtheorem{thm}{Theorem}
\newtheorem{lem}[thm]{Lemma}
\newtheorem*{lem*}{Lemma}
\newtheorem{prop}[thm]{Proposition}
\newtheorem*{prop*}{Proposition}
\newtheorem{cor}[thm]{Corollary}
\newtheorem*{thm*}{Theorem}

\newtheorem*{defn*}{Definition}

\newtheorem{conj}{Conjecture}

\newtheorem{hyp}[conj]{Speculation}

\newtheoremstyle{rem}{}{}{\slshape}{}{\itshape}{.}{.5em}{}
\theoremstyle{rem}
\newtheorem{rem}{Remark}
\newtheorem*{ex*}{Example}

\newtheoremstyle{pr}{}{}{}{}{\scshape}{.}{.5em}{}
\theoremstyle{pr}
\newtheorem*{pr*}{Proof}
\AtEndEnvironment{pr*}{\null\hfill\qedsymbol}

\let\emph\relax % there's no \RedeclareTextFontCommand
\DeclareTextFontCommand{\emph}{\bfseries\em}

%Fonts
\usepackage{newpxtext,newpxmath}
\usepackage[utf8]{inputenc}
\usepackage[T1]{fontenc}

%Raccourcis

\newcommand{\Z}{\mathds{Z}}
\newcommand{\Q}{\mathds{Q}}
\newcommand{\R}{\mathds{R}}
\newcommand{\C}{\mathds{C}}

\newcommand{\D}{\mathds{D}}
\newcommand{\T}{\mathds{T}}
\newcommand{\Id}{\mathds{1}}
\newcommand{\Ker}{\operatorname{Ker}}

\newcommand{\Hom}{\operatorname{Hom}}

\newcommand{\del}{\partial}

%new commands added:
\newcommand{\om}{\omega}

\newcommand{\ra}{\rightarrow}

\newcommand{\eps}{\epsilon}
\newcommand{\la}{\lambda}

\newcommand{\brat}[1]{{\left< #1 \right>}}

%new defs added:
\def\mrm#1{{\mathrm{#1}}}
\def\cl#1{{\mathcal{#1}}}

%Math operators added:
\DeclareMathOperator{\Ab}{{\mrm{Ab}}}

\DeclareMathOperator{\Diff}{\mathrm{Diff}}

\DeclareMathOperator{\Ham}{\mathrm{Ham}}

\DeclareMathOperator{\rank}{\mathrm{rank}}
\DeclareMathOperator{\im}{\mathrm{im}}
\DeclareMathOperator{\id}{\mathrm{id}}

\let\phi\varphi
\let\epsilon\varepsilon
\let\emptyset\varnothing

\def\HR{$H$-rational}
\def\HE{$H$-exact}
\def\Ham{\operatorname{Ham}}\def\Symp{\operatorname{Symp}}
\def\LSymp{\m L{\mathrm{Symp}}} \def\LSympZ{\m L{\mathrm{Symp}_0}} \def\LHam{\m L{\mathrm{Ham}}} \def\LLag{\m L\mathrm{Lag}} \def\Man{\mathsf{SMan}} 
\def\bb#1{\mathds{#1}} \def\m#1{\mathcal{#1}}  
\def\wh#1{\widehat{#1}}  

% for footnote symbol
\makeatletter
\let\@fnsymbol\@arabic
\makeatother

% ===== pour les affiliations
\AtEndDocument{\bigskip{\footnotesize%
  \textsc{M. S. Atallah: Department of Pure Mathematics, University of Sheffield, Hicks Building, Hounsfield
Road, Sheffield, S3 7RH, UK} \par
  \textit{E-mail address}, \texttt{m.s.atallah@sheffield.ac.uk} \par
  \addvspace{\medskipamount}
  \textsc{J.-P. Chass\'e: CRM, Universit\'e de
  	Montr\'eal, C.P. 6128 Succ. Centre-Ville Montr\'eal, QC H3C 3J7, Canada} \par
  \textit{E-mail address}, \texttt{jean-philippe.chasse@umontreal.ca} \par
  \addvspace{\medskipamount}
  \textsc{R. Leclercq: Laboratoire de Math\'ematiques Jean Leray, Nantes Universit\'e, 2 Chemin de la Houssini{\`e}re, BP 92208, Nantes, France} \par
  \textit{E-mail address}, \texttt{remi.leclercq@univ-nantes.fr} \par
    \addvspace{\medskipamount}
    \textsc{E. Shelukhin: D\'epartement de Math\'ematiques et de Statistique, Universit\'e de
Montr\'eal, C.P. 6128 Succ. Centre-Ville Montr\'eal, QC H3C 3J7, Canada} \par
  \textit{E-mail address}, \texttt{egor.shelukhin@umontreal.ca} \par
  \addvspace{\medskipamount}

}}

\begin{document}

\title{Weinstein exactness of nearby Lagrangians and related questions}

\author{Marcelo S. Atallah\thanks{Partially supported by the FRQNT and the Fondation Courtois} , Jean-Philippe Chassé\thanks{Partially supported by the Swiss National Science Foundation (grant number  200021\_204107)} , Rémi Leclercq\thanks{Partially supported by the ANR grant 21-CE40-0002 (CoSy)} , Egor Shelukhin\thanks{Partially supported by NSERC, Fondation Courtois, and an Alfred P. Sloan Fellowship}}

\maketitle

\abstract{
 We address the following problem: if a Hamiltonian diffeomorphism maps a Lagrangian submanifold $L$ to a small Weinstein neighborhood of $L$, is the image necessarily Hamiltonian isotopic to $L$ inside that neighborhood? On the one hand, we show that the question can have a negative answer in any symplectic manifold of dimension at least six. On the other hand, we answer an \textit{a priori} weaker form of the question in the positive in various cases when $L$ satisfies a rationality condition: we prove that the image of $L$ is often exact inside the Weinstein neighborhood. We provide applications to the Lagrangian counterpart of the $C^0$ flux conjecture, to $C^0$-rigidity phenomena of Hamiltonian diffeomorphisms, and to topological properties of spaces of Lagrangians with the same rationality constraint. Moreover, we state and prove cases of an analogue of Viterbo's spectral norm conjecture for non-exact Lagrangians{; in the process, we} make progress on an old question of Viterbo regarding integer difference vectors between points of Lagrangians.
}

%%%%%%%%%%%%%%%%%%%%%%%%%%%%%%%%%%%%%%%%%%%%%%%%%%%

\section{Introduction}
\label{sec:introduction}

This paper aims to study the local topological properties of natural sets of Lagrangians, most notably the Hamiltonian and symplectic orbits of a given Lagrangian $L$, respectively
\begin{align*}
	\LHam(L) &:= \Ham(M) \cdot L = \{ \phi(L) \,|\, \phi\in\Ham(M) \}\,,\\
	\LSympZ(L) &:= \Symp_0(M) \cdot L = \{ \psi(L) \,|\, \psi\in\Symp_0(M) \}\,.
\end{align*}
Here, $\Ham(M)$ is the group of diffeomorphisms that are the time-$1$ map of an isotopy generated by a time-dependent vector field $X_t$ such that $\iota_{X_t}\omega$ is exact for all $t\in [0,1]$. The elements of $\Ham(M)$  are called Hamiltonian diffeomorphisms and are important automorphisms of a symplectic manifold. Hamiltonian diffeomorphisms belong to the identity component $\Symp_0(M,\omega)$ of the group of symplectic diffeomorphisms since they are, by definition, isotopic to the identity through a path of symplectic diffeomorphisms.

Note that for the sake of conciseness, we will refer throughout the paper to a {\it closed connected Lagrangian submanifold of a connected symplectic manifold without boundary} as a ``Lagrangian in a symplectic manifold''.

To enable our study, we first fix a metric $g$ on the underlying manifold $L$. Recall that the Weinstein neighbourhood theorem ensures that there exist $r>0$ and a symplectomorphism $\Psi : D^{*}_{r}L \to \m W_{r}(L)$ from the codisk bundle of $L$ of radius $r$ to a neighbourhood $\m W_{r}(L)$ of $L$ in $M$ which maps the 0-section to $L$.

Therefore, understanding $\LHam(L)$ \emph{locally} is intimately related to the nearby Lagrangian conjecture (or NLC for short), which completely characterizes Lagrangians which are in the Hamiltonian orbit of the 0-section in $T^*L$. Indeed, it states that those are precisely the exact Lagrangians. It is known to hold for $S^{1}$, $S^{2}$~\cite{Hind2004}, $\bb R\mathrm P^{2}$~\cite{HindPinsonnaultWu2016,Adaloglou2022}, and $\bb T^{2}$ \cite{DimitroglouGoodmanIvrii2016}. Without restriction on the diffeomorphism type, the most advanced result in the direction of the NLC states that the natural projection $\pi : T^{*}L \to L$ induces a simple homotopy equivalence between any exact closed Lagrangian and the 0-section \cite{AbouzaidKragh2018}. 
This latter result will play a crucial role in our study of the local structure of $\LHam (L)$.

Inspired by this conjecture, we propose that if $L' \in \LHam(L)$ is close to $L$, then there is an accordingly small Hamiltonian isotopy from $L$ to $L'$. More precisely, we consider the following speculation.

\begin{hyp} \label{conj:strong}
	Let $L$ be a Lagrangian in a symplectic manifold $M$. For every small enough neighbourhood $U$ of $L$, we have the following property. If $L'$ is Hamiltonian isotopic to $L$ in $M$ and $L'\subseteq U$, then there exists a Hamiltonian isotopy $\{\phi_t\}_{t\in [0,1]}$ supported in $U$ such that $\phi_1(L)=L'$.
\end{hyp}

If this holds for $L$, then its Hamiltonian orbit $\LHam(L)$ is locally path connected via Hamiltonian isotopies. Although this statement is new in the Lagrangian context, there are some results towards its Hamiltonian counterpart. More precisely, the group $\mathrm{Ham}_c(M)$ of compactly supported Hamiltonian diffeomorphisms of a symplectic manifold $M$ is locally path connected in the $C^0$ topology (via Hamiltonian isotopies) if $M$ is a closed surface or the open ball $B^{2n}$. The 2-dimensional case case follows from Fathi's work on homeomorphisms preserving a volume form~\cite{Fathi1980} and the folkloric fact that a path of such homeomorphisms on a closed surface can be $C^0$ approximated by a path of symplectomorphisms; see~\cite{Seyfaddini2013} for a proof. The case of the open ball was proved by Seyfaddini, also in~\cite{Seyfaddini2013}. Note, however, that the local path connectedness of $\mathrm{Ham}(M)$ does not yield Speculation~\ref{conj:strong}.\par

A confirmation of Speculation~\ref{conj:strong} is within reach only in the few cases where the NLC is known to hold. However, the following weaker form of the speculation should be much more accessible.\par

\begin{hyp} \label{conj:weak}
	Let $L$ be a displaceable Lagrangian in a symplectic manifold $M$. There exists a neighbourhood $U$ of $L$ with the following property. If $L'$ is Hamiltonian isotopic to $L$ in $M$ and $L'\subseteq U$, then $L\cap L'\neq\emptyset$.
\end{hyp}

Note that, if $U$ is a Weinstein neighbourhood, then the restriction of the symplectic form $\omega$ of $M$ to $U$ admits a primitive $\lambda$, i.e.\ $\omega|_U=d\lambda$. Therefore, inside a Weinstein neighbourhood, we can ask if $L'$ is \emph{exact}, i.e.\ if $\lambda|_{L'}=df$ for some $f:L'\to\R$. By a classic result of Gromov~\cite{Gromov1985}, if $L'$ is exact, then it must intersect $L$. In particular, this means that Speculation~\ref{conj:weak} is true whenever $L$ is already exact in $M$ or whenever $H^1(L;\R)=0$. In practice, we will investigate Speculation~\ref{conj:weak} by trying to prove such local exactness phenomenon.

Theorem~\ref{thm:Counter-Example} below shows that even Speculation~\ref{conj:weak} is too much to hope for in full generality; there exist certain {\it irrational} tori for which it fails.

\medskip

{To give an idea of the methods we use to prove our positive results, we introduce a Viterbo-style conjecture. To do so, we need the following rationality notions.}

\begin{defn*}
	Let $L$ be a Lagrangian of a symplectic manifold $(M,\omega)$.  We say that $L$ is \emph{$H$-rational} in $(M,\om)$ if the group of relative periods $\om(H_2(M,L;\Z)) = \tau_L \Z \subset \R$ is a discrete subgroup. We call $\tau_L \geq 0$ the \emph{$H$-rationality constant} of $L.$ We say that $L$ is \emph{$H$-exact} if $\tau_L = 0$. 
\end{defn*}

With these definitions in mind, we make the following conjecture, which we prove in several cases below.

\begin{conj}
	\label{conj: r Vit}
	Let $L$ be a closed connected manifold. Suppose that $K$ is an $H$-rational Lagrangian inside $D^*_r L$ such that the map $(\pi_K)_*: H_1(K;\R) \to H_1(L;\R)$ induced by $\pi_K = \pi|_K$ is not surjective. {Here, $\pi: T^*L \to L$ denotes the canonical projection.} Then, the $H$-rationality constant $\tau_K$ of $K$ satisfies \[\tau_K \leq C r\] for a constant $C$ depending only on $L$ and the choice of an auxiliary metric on it.
\end{conj}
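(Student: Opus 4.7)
My plan is to reinterpret $\tau_K$ via the canonical Liouville form $\lambda_{\mrm{can}}$ and then use the hypothesis on $(\pi_K)_\ast$ to exhibit a small relative 2-cycle. By Stokes applied to $d\lambda_{\mrm{can}} = \omega$, every class $A \in H_2(T^\ast L, K;\Z)$ satisfies $\omega(A) = \langle [\lambda_{\mrm{can}}|_K], \partial A\rangle$; the long exact sequence of $(T^\ast L, K)$, combined with $H_1(T^\ast L) = H_1(L)$, identifies the image of the connecting map $\partial \colon H_2(T^\ast L, K;\Z) \to H_1(K;\Z)$ with $\ker((\pi_K)_\ast \colon H_1(K;\Z) \to H_1(L;\Z))$. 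Hence
\[ \tau_K = \min\bigl\{ |\langle [\lambda_{\mrm{can}}|_K], \gamma\rangle| : \gamma \in \ker(\pi_K)_\ast,\ \langle [\lambda_{\mrm{can}}|_K], \gamma\rangle \neq 0 \bigr\}, \]
with the convention that $\tau_K = 0$ in the \HE\ case, for which the bound is trivial. Since $K \subset D^\ast_r L$, the pointwise estimate $|\lambda_{\mrm{can}}|_K|_{g^\ast} \leq r$ yields $|\int_\gamma \lambda_{\mrm{can}}| \leq r \cdot \mrm{length}_g(\pi_K \circ \gamma)$, so the task reduces to producing a loop $\gamma$ in $K$ with $(\pi_K)_\ast[\gamma] = 0$, non-zero Liouville period, and projection to $L$ of length bounded by a constant depending only on $(L,g)$.

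By Poincar\'e--de Rham duality, the non-surjectivity of $(\pi_K)_\ast$ on $H_1(\cdot;\R)$ yields a non-zero class $[\eta] \in H^1(L;\R)$ with $\pi_K^\ast [\eta] = 0$ in $H^1(K;\R)$. Fix a closed representative $\eta$ and a primitive $f \colon K \to \R$ of $\pi_K^\ast \eta$. This datum produces a symplectic isotopy $\phi_t(q,p) = (q, p - t\eta_q)$ of $T^\ast L$: it preserves $\omega$ (because $\eta$ is closed) and pulls back $\lambda_{\mrm{can}}$ by the $K$-exact 1-form $-t\pi^\ast\eta$, so $\tau_{\phi_t(K)} = \tau_K$ for every $t$. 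I would use this shear to align $K$ with a reference Lagrangian (the zero section, or a well-chosen graph of $s\eta$) inside a slightly enlarged neighborhood, while tracking how non-surjectivity of $(\pi_K)_\ast$ constrains the relative position of $K$ and this reference.

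The heart of the argument is to then construct the required small 2-chain via a Floer- or pseudo-holomorphic-curve argument tailored to \HR\ Lagrangians, along the lines developed in the earlier sections of the paper: set up a Floer complex for $K$ against a carefully chosen test Lagrangian $L'$ (e.g.\ a perturbed zero section, or a cotangent fiber of $L$); show that non-surjectivity of $(\pi_K)_\ast$ forces non-vanishing of the relevant Floer cohomology, so that $K$ cannot be displaced from $L'$ by a Hamiltonian of Hofer energy $\lesssim r$ (the energy estimate coming from the containment in $D^\ast_r L$); then extract from this a Floer strip or disk whose $\omega$-energy bounds a non-trivial multiple of $\tau_K$ by $Cr$.

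The main obstacle is bridging the purely topological non-surjectivity hypothesis with a quantitative Floer-theoretic conclusion---in particular, ensuring that the output trajectory realizes a genuinely non-trivial cycle in $\ker(\pi_K)_\ast$ rather than a null-homologous loop of $K$ projecting to a boundary in $L$. The required coupling of $\mrm{coker}((\pi_K)_\ast)$ with the Floer-theoretic duality pairing is delicate, and I expect this is where the ``several cases'' formulation in the paper enters: one likely restricts to monotone, weakly exact, or small-$\tau_K$ Lagrangians where bubbling is controlled and action filtrations are robust. The constant $C = C(L,g)$ should emerge naturally as an isoperimetric or filling-radius invariant of $(L,g)$, possibly combined with the $L^\infty$-norm of a basis of closed 1-form representatives of $H^1(L;\R)$.
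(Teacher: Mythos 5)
First, note that the statement you are proving is Conjecture~\ref{conj: r Vit}: the paper does \emph{not} prove it in general, only in the special cases of Theorem~\ref{thm:conjecture} (certain products, and anything covered by such), so a complete blind proof would go well beyond the paper. Your opening reduction is correct as far as it goes: by Stokes and the long exact sequence of $(T^*L,K)$, $\tau_K$ is the minimal nonzero value of $\lambda_{\mathrm{can}}|_K$ on the image of $\partial$, i.e.\ on $\ker\bigl((\pi_K)_*:H_1(K;\Z)\to H_1(L;\Z)\bigr)$, and the pointwise bound $|\lambda_{\mathrm{can}}|\leq r$ gives $|\int_\gamma\lambda_{\mathrm{can}}|\leq r\cdot\mathrm{length}_g(\pi_K\circ\gamma)$. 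But the problem you reduce to~---~finding a loop $\gamma$ in $K$ with $(\pi_K)_*[\gamma]=0$, nonzero Liouville period, and projected length bounded by a constant depending only on $(L,g)$~---~is not solvable as stated and is essentially the whole difficulty. The relevant classes live in $H_1(K;\Z)$, and the minimal projected length of a representative depends on the (unknown) geometry of $K$, not on $L$; nothing in the hypotheses prevents every loop in $\ker(\pi_K)_*$ with nonzero period from projecting to an extremely long null-homologous curve in $L$. (Compare Proposition~\ref{prop:estimate_taut-form}, where such a length bound is available precisely because the Lagrangian is {\HE} and hence homotopy equivalent to $L$, so one controls lengths of classes of $L$ itself.) Your subsequent Floer paragraph is a plan rather than a proof, and you acknowledge the gap yourself; moreover it points in the wrong direction: you propose to show $K$ is \emph{non}-displaceable from a test Lagrangian, whereas the working mechanism is that $K$ becomes \emph{displaceable} (or lies in a domain with vanishing symplectic homology) after a suitable enlargement, so that Chekanov's energy bound caps $\tau_K$.

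For comparison, the paper's route in the cases it can handle is: (i) Proposition~\ref{prop:finiteness_to_conjecture} shows the conjecture is \emph{equivalent} to finiteness of a Cieliebak--Mohnke-type capacity $c'_{\mathrm{all}}(D^*_RL)$; (ii) for $L$ a product as in Theorem~\ref{thm:conjecture}, the non-surjectivity of $(\pi_K)_*$ is used to select a factor $L_i$ whose codisk bundle can be replaced by a Liouville filling $W_i$ with $SH(W_i)=0$ \emph{without changing} $\ker f_*$, hence without changing the period group; then Zhou's theorem (Theorem~\ref{thm:Zhou}) bounds $A^H_{\min}$ in the filled domain; (iii) Proposition~\ref{prop: cover} propagates the bound to quotients by coverings. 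The torus case (Theorem~\ref{thm:conjecture_torus}) runs the same filling idea concretely by viewing $D^*_aS^1$ as a punctured disk and invoking Chekanov's displacement energy estimate. If you want to salvage your approach, the step to replace is precisely your ``short loop'' step: instead of bounding lengths, you should trade the non-surjectivity hypothesis for an area-preserving embedding of the problem into a domain where a displacement-energy (or $SH=0$) argument applies.
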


\begin{rem}
	\label{rmk:homotopy_proof}
	We could ask the same question by replacing $\tau_K$ by the rationality constant of $K$, i.e.\ the constant $\tau\geq 0$ such that $\om(\pi_2(M,L)) = \tau \Z$ when it exists. Likewise, it would be interesting to see if there is a version of Conjecture~\ref{conj: r Vit} {that} also applies to irrational Lagrangians, with the role of $\tau_K$ being replaced by $\hbar(K, J) = \min \{ \om(u) > 0\}$, where the minimum runs over all the non-constant $J$-holomorphic disks $u:\mathbb{D} \to M$ with boundary on $K$ for a fixed compatible almost complex structure $J.$ We leave this question for future work.
\end{rem}

\subsection{Main results}

\vspace{10pt}
The following result provides a counterexample to Speculations~\ref{conj:strong} and~\ref{conj:weak} (and to Speculation~\ref{conj:flux} below). It is not, however, a counterexample to the Viterbo-type conjecture. This shows that there is no hope to prove the above speculations in full generality, and that some rationality condition is required.

\begin{thm} 
	\label{thm:Counter-Example}
	In any symplectic manifold of dimension $2n \geq 6$, there exists a Lagrangian torus whose Hamiltonian orbit
	\begin{enumerate}%[label=(\roman*)]
		\item admits arbitrarily Hausdorff-close disjoint elements,
		\item is not closed in the Hausdorff topology inside the set of Lagrangian tori.
	\end{enumerate}
\end{thm}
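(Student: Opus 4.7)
Since any symplectic manifold of dimension $2n$ admits a Darboux chart, it suffices to construct the required torus inside a fixed ball of $(\R^{2n}, \om_{\mrm{std}})$; transplanting then gives the result in arbitrary $M$. My candidate for $L$ is an $H$-irrational product torus $L = \prod_{i=1}^n S^1(r_i)$ with the $r_i^2$'s $\Q$-linearly independent, so that the relative-period group $\om(H_2(\R^{2n},L;\Z)) = \sum \pi r_i^2 \Z$ is dense in $\R$. The dimension hypothesis $n \geq 3$ enters as providing at least three circle factors, which will be essential for the Hamiltonian constructions that follow.

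The core of the argument is to exhibit, for every $\delta > 0$, two elements $L_1, L_2 \in \LHam(L)$ satisfying $L_1 \cap L_2 = \emptyset$ and $d_H(L_1, L_2) < \delta$. My plan is to construct a continuous one-parameter family of Hamiltonian diffeomorphisms $\{\phi_t\}_{t \in [0,1]}$ of $\R^{2n}$, compactly supported, such that the Lagrangians $\phi_t(L)$ are pairwise disjoint for distinct nearby $t$, and vary continuously with respect to the Hausdorff topology; nearby parameter values then yield the required pairs. A key point is that this is \emph{not} equivalent to displacing $L$ close to itself~---~indeed, such a displacement is obstructed by the preservation of the Liouville class under Hamiltonian isotopy when the identification is done via the Hamiltonian itself~---~but rather to producing a continuous family whose successive terms are twisted with respect to one another in a topologically nontrivial way. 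The existence of such a family relies on the dimensional hypothesis essentially: in dimension $2n \geq 6$, one can perform a Whitney-trick-style argument for Lagrangian cobordisms of codimension $\geq 3$, providing the flexibility to construct a Hamiltonian path along which $L$ sweeps through disjoint copies of itself. In dimension $\leq 4$, symplectic rigidity~---~specifically Chekanov--Schlenk type rigidity for Lagrangian tori in $\R^4$ or analogous results~---~obstructs such a construction.

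Property (i) is then immediate from the existence of the family. For (ii), one arranges $\{\phi_t\}$ so that the flux classes of $\phi_t(L)$, regarded in $H^1(L; \R)$ modulo the period lattice of $L$, traverse a dense one-parameter arc~---~this is where the $H$-irrationality plays a role, ensuring the ambient flux subgroup is rich enough to support such arcs. Extracting a Hausdorff-convergent subsequence $\phi_{t_k}(L)$ whose flux classes converge to a class outside the (at most countable) set realized by $\LHam(L)$ then produces a limit Lagrangian torus not in the orbit. The main obstacle, and the point where the dimensional restriction enters essentially, is the Whitney-trick step in constructing $\{\phi_t\}$; this is also what pinpoints why no analogous counterexample can be built in dimension $\leq 4$.
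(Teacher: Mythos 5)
Your overall strategy---working in a Darboux ball with an irrational product torus and transplanting---matches the paper's, but the core of your argument contains a step that cannot work. You propose a \emph{continuous}, compactly supported Hamiltonian family $\{\phi_t\}$ such that $\phi_t(L)$ and $\phi_s(L)$ are disjoint for distinct nearby $t,s$. This is impossible: for $t$ close to $s$ the Hofer norm of $\phi_t\circ\phi_s^{-1}$ is bounded by $\int_s^t \mathrm{osc}(H_\tau)\,d\tau$, which tends to $0$, yet this diffeomorphism would displace the Lagrangian $\phi_s(L)$, contradicting the positivity of the displacement energy of Lagrangian submanifolds in tame symplectic manifolds (Chekanov's theorem, via the minimal area of holomorphic disks for a fixed $J$). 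The whole point of the phenomenon is that the disjoint, Hausdorff-close elements of the orbit are connected only by ``large'' Hamiltonian diffeomorphisms; closeness as subsets does not, and cannot, come from closeness along a single Hamiltonian path. Relatedly, the ``Whitney-trick-style argument for Lagrangian cobordisms of codimension $\geq 3$'' you invoke is not an available tool and is not the source of the dimension restriction.

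The paper's actual mechanism is Chekanov's classification of product tori: $T(a,a+b,a+c)$ is Hamiltonian isotopic to $L=T(1,2,1+\alpha)$ in $\C^3$ ($\alpha$ irrational) if and only if $a=1$ and $\mathrm{span}_\Z\{b,c\}=\mathrm{span}_\Z\{1,\alpha\}$. Taking $b=p_i-q_i\alpha$ and $c=p_{i+1}-q_{i+1}\alpha$ from consecutive continued-fraction convergents gives arbitrarily small positive $b,c$ with the correct span, so the orbit contains pairwise disjoint product tori $T(1,1+b,1+c)$ accumulating (even in $C^1$) on the monotone torus $T(1,1,1)$, which is not in the orbit since its span is trivial; density of admissible $(b,c)$ in $\R^2_{>0}$ gives (i). The need for \emph{two} free parameters $b,c$ whose integer span is a prescribed dense subgroup is exactly why $n\geq 3$ is required---in dimension $4$ Chekanov's classification shows each product torus is isolated in its own Hamiltonian orbit, so no such construction exists there. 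Finally, your argument for (ii) via an ``at most countable set of flux classes realized by $\LHam(L)$'' is unjustified (and the existence and smoothness of the limit Lagrangian would also need proof), whereas the paper's limit is the explicit monotone torus. The globalization step (product with $T(1)^{n-3}$, support control via Chekanov--Schlenk in a ball of capacity $n+2$, rescaling, Darboux) is the one part of your outline that aligns with the paper.
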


{Both claims actually hold for any reasonable notion of $C^{1}$ topology~---~see Section~\ref{sec:counter-example-all-conj} below.}

The existence of such tori follows directly from the characterization of product tori in the Hamiltonian orbit of a given product Lagrangian torus in $\bb C^{n}$ by Chekanov \cite{Chekanov1996} and in large enough balls by Chekanov and Schlenk~\cite{ChekanovSchlenk2016}. We present the details in Section~\ref{sec:counter-example-all-conj} below. \par

Again, we make the crucial observation that these Lagrangian tori are not \emph{rational}, and therefore do not contradict our positive results below. \par

On the other hand, we have positive results related to Conjecture~\ref{conj: r Vit}.

\begin{thm}
	\label{thm:conjecture}
	Conjecture \ref{conj: r Vit} holds for $L=L_0 \times L_1\times\dots\times L_k$, where $H_1(L_0;\R)=0$, and $L_i$, $i \geq 1$, satisfies $H_1(L_i;\R)=\R$ and admits a Lagrangian embedding in a Liouville domain $W_i$ with $SH(W_i)=0.$
	Furthermore, if {$L$ is any manifold covered} by a closed connected manifold $L'$ for which Conjecture \ref{conj: r Vit} holds, then it also holds for $L.$
\end{thm}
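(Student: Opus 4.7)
The theorem has two claims, which I would handle separately.

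For the \emph{covering reduction}, let $p : L' \to L$ be a finite covering (which is forced by the assumption that $L'$ is closed) for which Conjecture~\ref{conj: r Vit} is known. This extends canonically to a symplectic covering $\tilde p : D^*_r L' \to D^*_r L$ intertwining the canonical Liouville forms. Lift $K$ to a connected component $\tilde K$ of $\tilde p^{-1}(K)$; since $\tilde p$ is a local symplectomorphism, pushforward of 2-chains with boundary in $\tilde K$ sends them to 2-chains with boundary in $K$ of identical symplectic area, yielding $\omega(H_2(D^*_r L', \tilde K;\Z)) \subseteq \omega(H_2(D^*_r L, K;\Z)) = \tau_K \Z$, and hence $\tau_K \leq \tau_{\tilde K}$ when both are positive (the case $\tau_K = 0$ is trivial). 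Using Remark~\ref{rmk:homotopy_proof} together with a careful choice of connected component, I would verify that the Betti number inequality $b_1(\tilde K;\R) \leq b_1(L';\R)$ persists; applying the hypothesis to $\tilde K$ then delivers $\tau_K \leq \tau_{\tilde K} \leq C' r$.

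For the \emph{product case}, the plan begins by dualising the non-surjectivity of $(\pi_K)_*$ to a nonzero class $\eta \in H^1(L;\R)$ with $\pi_K^*\eta = 0$. The Künneth formula together with $H^1(L_0;\R) = 0$ yields $\eta = \sum_{i \geq 1} p_i^* \eta_i$ with some $\eta_i \neq 0$ in $H^1(L_i;\R) \cong \R$. Using the Weinstein embeddings $L_i \hookrightarrow W_i$ and a Weinstein neighbourhood $\m W(L_0)$ of the $0$-section in $T^*L_0$, assemble an embedding $D^*_r L \hookrightarrow \m W(L_0) \times \prod_{i\geq 1} W_i$ into a product Liouville manifold containing $K$. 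The vanishing $SH(W_i) = 0$ furnishes a compactly supported Hamiltonian in $W_i$ displacing $L_i$ from itself with finite energy, which by the product structure induces a Hamiltonian displacing $K$ in the ambient product. The key step is then to convert this into a quantitative bound by producing a 2-chain $u$ with $\partial u \subset K$ and $0 < \omega(u) \leq Cr$, either via a spectral/Floer argument leveraging $SH(W_i) = 0$, or by exhibiting a loop $\gamma \subset K$ pairing nontrivially against the Liouville primitive in the $i$-th factor using $\eta_i$ and capping it with a disc of controlled area, recovering $\tau_K \leq \omega(u) \leq Cr$ via Stokes.

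The \emph{main obstacle} I expect is the quantitative step in the product case: upgrading the qualitative vanishing $SH(W_i) = 0$ into a linear-in-$r$ estimate requires delicate action--energy bookkeeping in the spirit of Viterbo-style spectral estimates and quantitative displacement-energy bounds. The covering step is topologically cleaner, but the interplay between non-surjectivity, $H$-rationality, and the choice of connected component in the lift nonetheless deserves care.
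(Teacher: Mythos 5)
Your covering reduction is essentially the paper's own argument (Proposition~\ref{prop: cover}): extend $p$ to a covering of codisk bundles, take a connected component $\tilde K$ of the preimage, and compare periods by pushforward to get $\tau_K\leq\tau_{\tilde K}$. The one step you leave vague is the weak link: verifying the hypothesis upstairs via the Betti-number inequality $b_1(\tilde K;\R)\leq b_1(L';\R)$ is not clearly possible, since a finite covering $\tilde K\to K$ can strictly increase $b_1$ (e.g.\ the torus covering the Klein bottle), and the jumps on the $K$-side and the $L$-side need not match. The paper instead transfers the non-surjectivity hypothesis directly: since $\Ab(H)\otimes\Q\to\Ab(G)\otimes\Q$ is onto for any finite-index subgroup $H\leq G$ (Lemma~\ref{lma: ab rank}), applied to both $\pi_1(L')\leq\pi_1(L)$ and $\pi_1(\tilde K)\leq\pi_1(K)$, surjectivity of $(\pi_{\tilde K})_*$ would force surjectivity of $(\pi_K)_*$. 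You should replace your Betti-number step with that.

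The product case contains a genuine gap, in two places. First, you enlarge \emph{all} factors $D^*_{r_i}L_i$ to $W_i$ at once. To deduce anything about $\tau_K$ from a small-area relative class $u'$ in the enlarged space, you need $\partial u'$ to bound (up to a bounded integer multiple) a relative class back in $D^*_rL$, so that $\omega(u')$ is forced to lie in $\tau_K\Z$; equivalently, you need the enlargement $j$ to satisfy $\ker(j\circ f)_*=\ker f_*$ on $H_1(\cdot;\R)$. Enlarging every factor destroys this, and then a bound on areas in the big space says nothing about $\tau_K$. The paper's key combinatorial step (proof of Theorem~\ref{thm:products}) is to enlarge only \emph{one} factor, chosen so that the projection of $H_1(X;\R)\cong\R^k$ away from the $i$-th coordinate is injective on the image of $f_*$~---~possible precisely because $f_*$ is not surjective~---~which preserves the kernel up to torsion and costs only the bounded factor $B_i$. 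Second, your quantitative mechanism is off: $SH(W_i)=0$ does \emph{not} furnish a finite-energy Hamiltonian displacing $L_i$ (vanishing of symplectic homology does not imply displaceability of Lagrangians), and no displacement is used. The input is Zhou's theorem (Theorem~\ref{thm:Zhou}): $SH=0$ implies finiteness of the Cieliebak--Mohnke-type capacity $c_{\mathrm{all}}=\sup_L A^H_{\min}(L,\cdot)$, via a truncated Viterbo transfer. Finally, the linear dependence on $r$ that you flag as the main obstacle requiring ``action--energy bookkeeping'' comes for free from conformality of the capacity, $c'_{\mathrm{all}}(D^*_rL)=(r/R)\,c'_{\mathrm{all}}(D^*_RL)$ (Lemma~\ref{lem:c_basic-prop} and Proposition~\ref{prop:finiteness_to_conjecture}), so finiteness at a single radius suffices.
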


{Note that, in the above statement, $L_0$ may be a point.}

\begin{rem} \label{rem:conj-Virt_homotopy}
	The prototypical example covered by Theorem \ref{thm:conjecture} is when $L$ has the diffeomorphism type of $L_0\times \T^m$ with $H_1(L_0;\R)=0$. In fact, in this case, an alternative argument can be used to prove a stronger version of the conjecture where $K$ is only assumed to be rational on disks and $(\pi_k)_*$ is not surjective on homology; this is proven in Section \ref{subsec:proof_thm-conj-torus}, as Theorem \ref{thm:conjecture_torus}.
\end{rem}

{We now study Speculations~\ref{conj:strong} and~\ref{conj:weak}.} More precisely, we show that we have \emph{Weinstein neighbourhoods of exactness} for Hamiltonian isotopic Lagrangians. \par

%\begin{thm}
%	\label{thm:speculation-B}
%	Suppose that $L$ is a {\HR} Lagrangian submanifold of $(M,\omega)$ for which Conjecture~\ref{conj: r Vit} holds. Then, there exists a Weinstein neighbourhood $\m W$ of $L$ such that all $L' \in \LHam(L)$ included in $\m W$ is exact in it. \par
%	
%	In particular, Speculation~\ref{conj:weak} has a positive answer for any such manifold, e.g.\ {\HR} $n$-tori, and Speculation~\ref{conj:strong} has a positive answer for {\HR} 2-tori.
%\end{thm}

\begin{thm}
	\label{thm:speculation-B}
	Suppose that $L$ is a {\HR} Lagrangian submanifold of $(M,\omega)$ for which Conjecture~\ref{conj: r Vit} holds. Then, there exists a Weinstein neighbourhood $\m W$ of $L$ such that all $L' \in \LHam(L)$ included in $\m W$ is exact in it. In particular, Speculation~\ref{conj:weak} has a positive answer in this case. \par
\end{thm}

When combined with the NLC, Theorem~\ref{thm:speculation-B} provides a positive answer to Speculation~\ref{conj:strong} for {\HR} 2-tori.

In ongoing work \cite{AtallahShelukhin2024}, a result of local exactness, similar to Theorem~\ref{thm:speculation-B}, is obtained for graphs in $M\times M$ of $C^0$-small Hamiltonian diffeomorphisms for $M$ closed. In that case, there is no requirement that $M$ be rational, contrary to the setup of the present work.

\subsection{Related questions} \label{subsec:related-questions}

\vspace{10pt}

\subsubsection{A question of Viterbo}
An old question of Viterbo is the following. Is there a constant $A\geq 1$ such that every rational Lagrangian $K$ inside $\C^n$ with rationality constant $\tau_K > A$ must have two distinct points whose difference vector has {Gaussian} integer coordinates? {This is equivalent to finding the maximal rationality constant of a nullhomotopic rational Lagrangian in the standard torus $\C^n/(\Z^n+i\Z^n)$.}

By inspecting the proof of the homotopy version of Theorem~\ref{thm:conjecture}, we prove a stronger statement with the difference vector being also real, under the additional assumption that $K$ is contained in $\R^n + i X$ for $X \subset \R^n$ given by $X = \{ (p_1,\ldots, p_n)\;|\; \min_j |p_j| \leq 1\}.$

\begin{prop}\label{cor:Vit numbers}
	Let $K$ be a rational Lagrangian inside $\C^n$ with rationality constant $\tau_K > 2.$ If $K$ is contained in $\R^n + i X$, then there exist two distinct points on $K$ whose difference vector is real with integer coordinates. 
\end{prop}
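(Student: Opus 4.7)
The strategy is to argue by contradiction: assume no two distinct points of $K$ differ by a real vector with integer coordinates, and then deduce $\tau_K \leq 2$, contradicting the hypothesis.

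Let $q : \C^n \to T^*\T^n$ be the quotient map identifying real parts modulo $\Z^n$. Under my assumption, $q|_K$ is an injective local diffeomorphism between closed manifolds of equal dimension, hence a diffeomorphism onto its image $\bar K := q(K)$, an embedded closed Lagrangian of $T^*\T^n$. First I would verify that $\bar K$ inherits the rationality of $K$ with the same constant $\tau_{\bar K} = \tau_K > 2$: a relative disk $(D^2, \partial D^2) \to (T^*\T^n, \bar K)$ lifts through the simply connected cover $\C^n$ to a disk whose boundary lies on some integer translate of $K$, and translating back yields a relative disk on $(\C^n, K)$ of identical symplectic area. Next, I would show that the map $(\pi_{\bar K})_* : H_1(\bar K; \R) \to H_1(\T^n; \R)$ induced by the base projection vanishes: a loop in $\bar K$ lifts to a loop in $K \subset \C^n$ whose base projection is a loop in the contractible space $\R^n$, and hence remains null-homotopic after further projection to $\T^n$. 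In particular this map is non-surjective, as required by Theorem~\ref{thm:conjecture_torus} in the homological version mentioned in Remark~\ref{rem:conj-Virt_homotopy}.

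Finally, the inclusion $K \subset \R^n + iX$ becomes $\bar K \subset \T^n \times X$; that is, $\bar K$ lies in the union of the $n$ coordinate slabs $\T^n \times \{|p_j| \leq 1\}$, each of width $2$. The main obstacle is that this region is not a codisk bundle of $\T^n$, so Theorem~\ref{thm:conjecture_torus} does not apply verbatim as a black box. However, examining its proof for $L = \T^n$ with the flat metric should reveal that the underlying spectral or Hofer--Chekanov displacement estimate depends only on the width of the narrowest coordinate direction in which $\bar K$ sits locally, rather than on the full extent of an enclosing codisk bundle; the triviality of $(\pi_{\bar K})_*$ established above is what allows one to localize the displacement argument to a single $T^*S^1$-factor. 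Since each slab has width $2$, this sharper coordinate-by-coordinate bound gives $\tau_{\bar K} \leq 2$ and closes the contradiction.
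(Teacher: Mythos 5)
Your overall strategy is the same as the paper's: argue by contradiction, pass to the quotient $T^*\T^n=\C^n/\Z^n$, check that the embedded image $\overline{K}$ is rational with the same constant $\tau_{\overline{K}}=\tau_K$ via lifting of disks, observe that the induced map on $H_1$ (equivalently $\pi_1$) of the ambient space vanishes because loops of $\overline{K}$ lift to loops of $K$ in $\C^n$, and then feed this into the displacement-energy argument of Theorem~\ref{thm:conjecture_torus} with $a=1$. Those first three steps are correct and are exactly what the paper does (the paper phrases the vanishing as ``the map $f_*$ from the proof above vanishes identically''); note only that what the argument actually uses is the vanishing of $H_1(\overline{K};\R)\to H_1(T^*\T^n;\R)$, which is stronger than mere non-surjectivity and is precisely what lets one fill in the puncture of \emph{any} coordinate factor without changing the period group.

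The gap is in your last paragraph. The mechanism of the proof of Theorem~\ref{thm:conjecture_torus} is not a ``coordinate-by-coordinate width bound'': it identifies $D^*_1S^1$ with the punctured disk $D^2(2)'$, fills in the puncture of the $i$-th factor (legitimate because the kernel of $f_*$ is unchanged), and then displaces the resulting disk $D^2(2)$ of area $2$ inside $\C$; Chekanov's estimate applied to $\overline{K}$ in the filled space gives $\tau_K\le 2$. For this one needs $\overline{K}$ to lie in a region of the form $(\text{bounded})^{i-1}\times D^2(2)\times(\text{bounded})^{n-i}$ for a \emph{single} index $i$, i.e.\ one needs global control of the $i$-th momentum on all of $\overline{K}$. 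When $\overline{K}$ only lies in $\T^n\times X=\bigcup_j\{|p_j|\le 1\}$, different points of the connected manifold $\overline{K}$ may satisfy the momentum bound in different coordinates, so no single factor can be filled and displaced this way; moreover ``local width'' is not what the displacement estimate sees --- a circle of area $2$ sitting far from the capping point of a filled factor encloses a large disk around that point and has correspondingly large displacement energy, so the heuristic you invoke is not merely unproved but points in the wrong direction. To close the argument you must specify the exact ambient exact symplectic manifold in which $\overline{K}$ is displaced, verify that the period group of $\overline{K}$ there still contains $\tau_K\Z$ (here the vanishing of $f_*$ is what you use), and produce a displacing Hamiltonian of Hofer norm at most $2+\epsilon$ despite $\overline{K}$ spreading across several slabs. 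This is the entire content of the strengthening from $[-1,1]^n$ to $X$, and it is the one point your write-up (like the paper's one-sentence remark) leaves unjustified.
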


\subsubsection{Lagrangian flux conjectures}
We now move on to another speculation about Lagrangian submanifolds.

\begin{hyp}[Lagrangian $C^0$ flux conjecture] \label{conj:flux}
	Let $L$ be a Lagrangian in a symplectic manifold $M$. Its Hamiltonian orbit $\LHam(L)$ is Hausdorff-closed in the space $\LLag(L)$ of all Lagrangians that are Lagrangian isotopic to $L$.
\end{hyp}

As far as the authors know, this version of the conjecture has not been studied previously~---~we will talk about its $C^1$ cousin, which has been studied, below. The name that we give it here is in analogy to the famous $C^0$ flux conjecture for Hamiltonian diffeomorphisms, which states that the group $\mathrm{Ham}(M)$ of Hamiltonian diffeomorphisms of a closed symplectic manifold $M$ is $C^0$ closed in the identity component $\mathrm{Symp}_0(M)$ of the group of symplectomorphisms of $M$. This conjecture is only known to hold in some fairly specific cases~\cite{LalondeMcDuffPolterovich1998,Buhovsky2015}, and further results in this direction will appear in \cite{AtallahShelukhin2024}, which still do not resolve this question completely. This is in stark contrast with its $C^1$ cousin, which is known to hold in full generality~\cite{Ono2006}. 
\par

The Lagrangian $C^0$ flux conjecture does not imply the one for Hamiltonian diffeomorphisms. Indeed, it only implies that, whenever $\phi$ is $C^0$-small, $\operatorname{graph}\phi$ is Hamiltonian isotopic to the diagonal in $M\times M$. However, that Hamiltonian isotopy may not be through graphical Lagrangians.
For a similar reason, even if Speculation~\ref{conj:strong} holds for all graphs in $M\times M$, it does not imply the local path connectedness of $\mathrm{Ham}(M)$.
\par

To study this speculation, we can apply the techniques developed to prove Theorems~\ref{thm:nbhd-of-H-exactness} and~\ref{thm:from-H-exact-to-exact}, to provide positive results in a few listed examples.

\begin{thm} \label{thm:C0-flux}
	Suppose that $L$ is either a Lagrangian circle, 2-sphere, or projective plane, or a {\HR} Lagrangian 2-torus. Its Hamiltonian orbit $\LHam(L)$ and symplectic orbit $\LSympZ(L)$ are Hausdorff-closed in the space of Lagrangians diffeomorphic to $L$.
\end{thm}

\begin{rem} \label{rem:variants-flux}
	If we replace the Hausdorff topology by the $C^1$ one, we can fully remove the constraint on the diffeomorphism type of $L$ and still get the analogous result. Furthermore, similar techniques can be utilized to better understand the space of all Lagrangian of a given {\HR ity} constant. We will explore this further in Section~\ref{sec:discussion}.
\end{rem}

\subsubsection{$C^{0}$ rigidity of Hamiltonian diffeomorphisms}
There is a natural variant of Speculation~\ref{conj:weak} where we ask not that $L'$ be close to $L$, but rather that the Hamiltonian diffeomorphism sending $L$ to $L'$ be small. More precisely, we can make the following conjecture. \par

\begin{conj} \label{conj:ham-lag}
	Let $L$ be a displaceable Lagrangian in a symplectic manifold $M$. There exists $\delta>0$ with the following property. If $\phi$ is a Hamiltonian diffeomorphism of $M$ and $d_{C^0}(\Id,\phi)<\delta$, then $L\cap \phi(L)\neq\emptyset$.
\end{conj}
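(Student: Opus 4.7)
The natural plan is to reduce Conjecture~\ref{conj:ham-lag} to the symplectic nondisplacement result of Corollary~\ref{cor:neighborhoud_nondisplacement}, on the model of how Speculation~\ref{conj:weak} is deduced from the local exactness theorems. First, $d_{C^0}(\Id,\phi)<\delta$ means that every point of $M$ is moved by less than $\delta$ in whichever background metric one has fixed; in particular $\phi(L)$ is contained in the metric $\delta$-neighbourhood $N_\delta(L)$ of $L$. Choose $\delta$ small enough that $N_\delta(L)\subseteq\m W(L)$, where $\m W(L)$ is the Weinstein neighbourhood of local nondisplacement produced by combining Theorems~\ref{thm:nbhd-of-H-exactness} and~\ref{thm:from-H-exact-to-exact}. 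Then $\phi(L)\in\LHam(L)$ sits inside $\m W(L)$, and Corollary~\ref{cor:neighborhoud_nondisplacement} yields $L\cap\phi(L)\neq\emptyset$. This immediately handles every $L$ covered by the hypotheses of Theorem~\ref{thm:conjecture}~---~products $L_0\times L_1\times\cdots\times L_k$ as in that statement, their coverings, Lagrangian 2-tori, 2-spheres, projective planes, and the other examples of Section~\ref{sec:intro_examples}.

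The remaining and main obstacle is the genuinely general displaceable case, where no rationality hypothesis is available. The counterexample in Theorem~\ref{thm:Counter-Example} produces Hausdorff-close disjoint Lagrangian tori in a single Hamiltonian orbit, but it does not directly refute Conjecture~\ref{conj:ham-lag}: the Hamiltonian diffeomorphisms realising the motion between these tori are not known to be $C^0$-small on the whole ambient manifold, only on the tori themselves. To attack the irrational case one would need to convert $C^0$-smallness of $\phi$ into more information than the crude containment $\phi(L)\subseteq N_\delta(L)$~---~for instance, a bound on an appropriate Lagrangian spectral invariant or a barcode-theoretic quantity associated to the pair $(L,\phi(L))$. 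By analogy with Chekanov's energy-capacity inequality, which uses the Hofer rather than the $C^0$ norm, one would then expect nondisplacement to follow from a lower bound on such an invariant forced by a $C^0$-small displacement.

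An alternative worth trying is to approximate an irrational $L$ by nearby rational Lagrangians $L_\epsilon$~---~possible by an arbitrarily $C^1$-small perturbation, as the authors note in Section~\ref{sec:intro_examples}~---~apply the rational case to each $L_\epsilon$, and then pass to the limit using Theorem~\ref{thm:limits}. The hard part here is keeping $\delta(L_\epsilon)$ bounded away from zero, since a priori both the Weinstein radius of nondisplacement and the $H$-rationality constant $\tau_{L_\epsilon}$ that appear in Theorems~\ref{thm:nbhd-of-H-exactness} and~\ref{thm:from-H-exact-to-exact} can degenerate as $\epsilon\to 0$. Extracting a quantitative estimate that survives this limit, ideally phrased in terms of data (dimension, topology, symplectic area profile) that is stable under $C^1$-perturbations of $L$, is where I would expect the real work of extending this plan beyond the rational setting to concentrate.
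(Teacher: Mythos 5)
Your reduction is valid as far as it goes, but it proves much less than the paper does, and it misses the one idea that makes this conjecture ``much easier'' than Speculations~\ref{conj:strong} and~\ref{conj:weak}. By routing through Theorem~\ref{thm:nbhd-of-H-exactness} and Corollary~\ref{cor:neighborhoud_nondisplacement}, you only use the containment $\phi(L)\subseteq N_\delta(L)\subseteq\m W(L)$, and therefore you need Conjecture~\ref{conj: r Vit} to hold for $L$ --- i.e.\ the restrictive hypotheses of Theorem~\ref{thm:conjecture} (products of the prescribed form, embeddings into Liouville domains with vanishing symplectic homology, coverings thereof). The paper's Corollary~\ref{cor:ham-lag} establishes the conjecture for \emph{every} {\HR} Lagrangian, with no constraint on the diffeomorphism type of $L$ whatsoever. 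The missed idea is Lemma~\ref{lem:C0-small_Ham}: the hypothesis is not merely that $\phi(L)$ lands in a small neighbourhood of $L$, but that the \emph{map} $\phi$ is $C^0$-close to the identity. Choosing a metric that is Sasaki on $\m W(L)$ and $\delta$ below the injectivity radius of $TM|_L$, the unique short geodesics from $x$ to $\phi(x)$ stay in $\m W(L)$ and assemble into a homotopy, inside $\m W(L)$, from the inclusion $\iota:L\hookrightarrow\m W(L)$ to $\phi\circ\iota$. Hence $\phi\circ\iota$ is a homotopy equivalence, $H_2(\m W(L),\phi(L))=0$, and $\phi(L)$ is {\HE} in $\m W(L)$ for free --- no capacities, no Floer theory, no hypotheses on the topology of $L$. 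Theorem~\ref{thm:from-H-exact-to-exact} (case (a), since $\phi(L)\in\LHam(L)$) then upgrades {\HE}ness to exactness in a smaller Weinstein neighbourhood, and exact Lagrangians in $D^*_rL$ intersect the zero section. In short: Theorem~\ref{thm:nbhd-of-H-exactness} is not needed here at all, and invoking it is what artificially shrinks the class of Lagrangians you can treat.

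Your closing speculations about the irrational case are reasonable but not what the paper does either: rather than approximating $L$ by rational Lagrangians or introducing spectral invariants, the paper's Proposition~\ref{prop:ham-lag} pushes the same geodesic-homotopy argument further. One concatenates the homotopy $\{f_t\}$ from $\Id$ to $\phi$ with a reversed Hamiltonian isotopy to produce tori $g(t,s)=c(t)(\gamma(s))$ whose area equals $\lambda_0(\gamma)$, and then kills these areas using either rationality of $\omega$ on $\pi_2(M)$ or torsion of $\im(\pi_1(L)\to\pi_1(M))$ --- at the price of letting $\delta$ depend on the support of $\phi$. So the correct direction of generalization is again to exploit $C^0$-smallness of the map itself, not to take limits of rational approximations of $L$.
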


In other words, any Hamiltonian diffeomorphism displacing $L$ is uniformly $C^{0}$-bounded away from 0.

The existence of such a bound is not at all trivial: if $L$ is a displaceable $n$-dimensional submanifold which is not Lagrangian, then it can be displaced by an arbitrarily $C^0$-small Hamiltonian diffeomorphism~\cite{LaudenbachSikorav1994}. Moreover, this does not follow from the fact that Lagrangians have positive displacement energy, since there are Hamiltonian diffeomorphisms which are arbitrarily $C^0$-small, but arbitrarily Hofer-large.

However, this is not expected to be the case for the spectral metric, that is, $C^0$-small Hamiltonian diffeomorphisms should also have small spectral norm. More precisely, Conjecture~\ref{conj:ham-lag} follows from the fact that Lagrangians have positive spectral displacement energy~\cite{KislevShelukhin2022} in the cases where it is known that the spectral metric is $C^0$-continuous, i.e.\ when $M$ is $\C^n$~\cite{Viterbo1992}, a closed surface~\cite{Seyfaddini2013}, closed and symplectically aspherical~\cite{BuhovskyHumiliereSeyfaddini2021}, $\C P^n$~\cite{Shelukhin2022}, or closed and negative monotone~\cite{Kawamoto2022} or symplectically Calabi-Yau~\cite{SWilkins}.\par

In the context of this paper, Conjecture~\ref{conj:ham-lag} is implied by Speculation~\ref{conj:strong} or by the Hamiltonian version of Speculation~\ref{conj:weak} above when it holds. However, it turns out we can directly prove a much stronger statement.
\begin{prop} \label{prop:ham-lag}
	Let $L$ be a displaceable Lagrangian of $M$, and take a compact $K$ containing $L$ in its interior. Suppose furthermore that one of the following holds:
	\begin{enumerate}%[label=(\alph*)]
		\item $M$ is rational, i.e.\ $\omega(\pi_2(M))$ is discrete, or
		\item the image of $\pi_1(L)\to\pi_1(M)$ is torsion.
	\end{enumerate}
	Then, there exists $\delta>0$ with the following property. If $\phi$ is a Hamiltonian diffeomorphism supported in $K$ and $d_{C^0}(\Id,\phi)<\delta$, then $L\cap \phi(L)\neq\emptyset$.
\end{prop}

\begin{rem} \label{rem:app_ham-lag}
	As will be explored in Section \ref{subsec:proof_prop-ham-lag}, this has further applications for understanding the image of a {\HR} Lagrangian under $C^0$-limits of symplectomorphisms. This shows yet more $C^0$-rigidity phenomena for these Lagrangians.
\end{rem}

\subsubsection{A conjecture of Buhovsky and Opshtein}

Using our Proposition~\ref{prop:estimate_taut-form}, which is essentially implicit in \cite{MembrezOpshtein2021}, we settle a conjecture of Buhovsky and Opshtein \cite[Conjecture 3]{BO2016} previously known only for $L' = T^n$ by \cite[Proposition 5.2]{BO2016}. Namely, in Section \ref{sec:proof-theorems-From-H-exact-2-exact} we prove the following.

\begin{thm}\label{thm: BO}
	Let $L, L'$ be closed manifolds and $i_k: L \to T^*L',$ $k \geq 1,$ be a sequence of smooth Lagrangian embeddings which $C^0$-converges, as mappings, to a topological embedding $i:L \to T^*L'$ with $i(L) = 0_{L'}.$ Then $[i_k^*\lambda] \to 0$ in $H^1(L;\R),$ where $\lambda$ is the tautological one-form on $T^*L'.$ 
\end{thm}

In fact our proof applies in the slightly more general case where $i:L \to T^*L'$ is a continuous map with $i(L) = 0_{L'}$ and the resulting map $i: L \to 0_{L'}$ is a homotopy equivalence.

\subsubsection{Topological properties of $\LHam(L)$ in the Hausdorff topology}
{While we need the NLC in all cases where we can prove Speculation~\ref{conj:strong}, the fact that $\LHam(L)$ is locally path connected in the Hausdorff topology can be proved whenever Conjecture~\ref{conj: r Vit} holds. More precisely, we get the following.}

\begin{prop} \label{prop:LHam-loc-contractible}
	Suppose that $L$ is {\HR} and that Conjecture~\ref{conj: r Vit} holds for it. Then $\LHam(L)$ is locally contractible in the Hausdorff topology.
\end{prop}

\section{Irrational counterexamples}
\label{sec:counter-example-all-conj}

We now explain the construction of the Lagrangian tori from Theorem~\ref{thm:Counter-Example}. These tori support the fact that we need in general to require some type of \emph{rationality} condition on our Lagrangians for Speculations~\ref{conj:strong}, \ref{conj:weak} or \ref{conj:flux} to hold.

\medskip
We start with the case when $M=\C^3$. Consider the product torus $L=T(1,2,1+\alpha):=T(1)\times T(2)\times T(1+\alpha)$, where $\alpha>0$ is an irrational number and $T(A)\subseteq \C$ denotes the round circle enclosing area $A>0$. By work of Chekanov~\cite{Chekanov1996}, another product torus $T(a,a+b,a+c)$ with $a,b,c>0$ is Hamiltonian isotopic to $L$ in $\C^3$ if and only if $a=1$ and $\mathrm{span}_\Z\{b,c\}=\mathrm{span}_\Z\{1,\alpha\}=:G$. \par

Let $p_i/q_i$ be the $i$th convergent to $\alpha$ obtained from its infinite continued fraction. In particular $p_i, q_i$ are coprime and $|p_i-q_i \alpha| < \frac{1}{q_{i+1}}.$ Fix $\epsilon>0.$ Then for all $i \geq i_0$ sufficiently large $|p_i-q_i \alpha| < \epsilon.$ Moreover, the matrix
\begin{align*}
	\begin{pmatrix}
		p_i & q_i \\ p_{i+1} & q_{i+1}
	\end{pmatrix}
\end{align*}
has determinant $\pm 1$ and hence for $i \geq i_0,$ $b =p_i-q_i \alpha,$ $c = p_{i+1} - q_{i+1}\alpha$ satisfy $\mathrm{span}_\Z\{b,c\}=G$ while also $|b|<\epsilon, |c|<\epsilon.$ By changing the signs of $b,c$ if necessary, which preserves both conditions, we can ensure that $b>0, c>0.$ 

This means that we can take $T(1,1+b,1+c)$ which are all in the Hamiltonian orbit of $L$ but are arbitrarily $C^1$-close to the monotone torus $T(1,1,1)$. This proves \textit{(ii)} of Theorem~\ref{thm:Counter-Example} in $\bb C^{3}$ and shows that, without the {\HR} hypothesis on $L$, not even the Lagrangian $C^1$ flux conjecture is true in $\C^3$. \par

Note that a similar argument as above actually implies that the set of $b,c>0$ such that $T(1,1+b,1+c)$ is Hamiltonian isotopic to $L$ is dense in $\R^2_{>0}$. This means that any neighbourhood $U$ of such a torus $T(1,1+b,1+c)$ contains infinitely many $T(1,1+b',1+c')$ in the same Hamiltonian orbit. But $T(1,1+b,1+c)\cap T(1,1+b',1+c')=\emptyset$ if $b\neq b'$ or $c\neq c'$. This proves \textit{(i)} of Theorem~\ref{thm:Counter-Example} in $\bb C^{3}$  and shows that, without the {\HR} hypothesis on $L$, not even Speculation~\ref{conj:weak} is true in $\C^3$. {Note that this also shows that the Lagrangian flux group of Lagrangian isotopies need not be discrete since that of $T(1,2,1+\alpha)$ is not.}  \par

We now explain how to generalize the result to any manifold of dimension $2n\geq 6$. First note that by taking a product with $T(1)^{n-3}$, we get a counterexample to our speculations in $\C^n$ whenever $n\geq 3$. Furthermore, by Theorem~1.1(ii) of~\cite{ChekanovSchlenk2016}, the Hamiltonian isotopy from $T(1,\dots,1,1+b,1+c)$ to $T(1,\dots,1,1+b',1+c')$ can be taken to be fully supported in the ball $B^{2n}(A)$ of capacity $A=n+1+\max\{b+c,b'+c'\}$, i.e.\ of radius $\sqrt{\frac{A}{\pi}}$. In particular, for $b$, $b'$, $c$, and $c'$ small enough, it can be supported in the ball of capacity $n+2$. Therefore, we get a counterexample in $M=B^{2n}(n+2)$ But then, by simply rescaling the ball, we get a counterexample in the ball $B^{2n}(A)$ for any $A>0$. By the Darboux theorem, any symplectic manifold $M^{2n}$ admits a symplectic embedding of the ball $B^{2n}(A)$ for $A$ small enough, which gives the counterexample for every $M$ with $\dim M\geq 6$. \par

\begin{rem} \label{rem:counterexample_4dim}
	Interestingly enough, the above counterexample does not work in dimension 4. Indeed, Chekanov's classification of product tori implies that every product torus $L$ in $\C^2$ has a $C^1$ neighbourhood $U$ such that $\LHam (L)\cap U=\{L\}$. In particular, the $C^1$ version of Speculation~\ref{conj:strong} holds for $L$, and if its Hamiltonian orbit is not closed, then the limit cannot be a product or a Chekanov torus. By Theorem~1.3 of~\cite{ChekanovSchlenk2016}, the same holds for product tori in small enough Darboux balls in subtame symplectically aspherical symplectic 4-manifolds. \par
	
	However, we expect that one can use Theorem~1.5 of~\cite{ChekanovSchlenk2016} to construct~---~in a similar fashion as above~---~a counterexample to Speculation~\ref{conj:weak} in any (spherically) irrational symplectic 4-fold. 
\end{rem}

\section{Relations between homological rationality and exactness and their standard counterparts}
\label{sec:central-lemma-and-H2pi}

In this section, we discuss relations between standard rationality/exactness and {\HR}ity/{\HE}ness. In Section~\ref{sec:central-lemma}, we prove the general following fact: for a closed Lagrangian of the cotangent bundle, being {\HE} is equivalent to being isotopic to an exact Lagrangian. In Section~\ref{sec:from-HE-exact}, we explain some specific situations in which {\HR}ity reduces to rationality. Finaly in Section~\ref{sec:why-h-rationality}, we give an example which illustrates why we generally work with the {\HR}ity condition rather than the standard rationality one. \par

\subsection{The characterization lemma}\label{sec:central-lemma}
\vspace{10pt}

The following lemma will prove to be quite useful in order to prove the main results of this work.
\begin{lem}
	\label{lemma:central}
	Let $L$ be a closed Lagrangian in $T^{*}Q$, the following are equivalent:
	\begin{enumerate}%[label=(\roman*)]
		\item $L$ is {\HE};
		\item the Liouville form $[\lambda_0|_L]$ is in the image of the map in $H^1(\cdot;\R)$ induced by the composition  $L\to T^*Q\to Q$;
		\item the composition $L\to T^*Q\to Q$ is a homotopy equivalence;
		\item $L$ is isotopic to an exact Lagrangian through Lagrangian submanifolds;
		\item $L$ is symplectically isotopic to an exact Lagrangian;
		\item $L$ is a shift of an exact Lagrangian by a closed one-form on $Q$.	
	\end{enumerate}
\end{lem}
\begin{proof}
	{We first show that \textit{(i)} and \textit{(ii)} are equivalent. To see this, note that {\HE ness} is equivalent to $\lambda_0|_L$ vanishes on the image of $H_2(T^*Q,L;\R)\to H_1(L;\R)$. But this is equivalent to $\delta^*[\lambda_0|_L]=0$, where $\delta^*:H^1(L;\R)\to H^2(T^*Q,L;\R)$ is the coboundary operator. By the long exact sequence in cohomology, this is equivalent to $[\lambda_0|_L]$ being in the image of the map induced by the inclusion $i:L\hookrightarrow T^*Q$. We then conclude using the fact that the projection $\pi: T^*Q\to Q$ is a homotopy equivalence.}
	
	{Suppose we have \textit{(ii)}.} This means that there exists a closed 1-form $\sigma \in \Omega^{1}(Q)$ such that $[i^{*}\lambda]=[i^{*}(\pi^{*}\sigma)]$. Now, $\sigma$ induces a fibrewise symplectomorphism $\psi_{\sigma}$ of $T^{*}Q$ which satisfies $[\psi_{\sigma}^{*}(i^{*}\lambda)]=0$ so that $\psi_{\sigma}$ maps $L$ to an exact Lagrangian. {Furthermore, $\psi_\sigma$ generates a shift by $\sigma$.} This shows that \textit{(ii)} yields \textit{(vi)}, which obviously yields \textit{(v)} and thus \textit{(iv)}.

	Note also that \textit{(iv)} implies by definition that the inclusion $i: L\hookrightarrow T^*Q$ is homotopic to the inclusion of an exact Lagrangian. Indeed, $L$ being Lagrangian isotopic to $L'$ means precisely that there is a smooth map $F:[0,1] \times L \to T^*Q$ such that $F_0 = i,$  $F_t$ for all $t$ is a Lagrangian embedding, and $F_1(L) = L',$ where $F_t: L \to T^*Q$ is given by $F_t(x) = F(t,x).$ But, when $L$ is exact, the composition $L\to T^*Q\to Q$ is a (simple) homotopy equivalence~\cite{AbouzaidKragh2018}, i.e. \textit{(iii)} holds. {Finally, note that \textit{(iii)} directly implies \textit{(ii)}.}
\end{proof}

\begin{rem}
	The notions \textit{(vi)} and \textit{(ii)} appear in \cite{AtallahShelukhin2024} under the names of {\em almost exact} and {\em $H_1$-standard} Lagrangians in $T^*Q$, respectively.
\end{rem}

\subsection{When does {\HR}ity reduce to rationality?}
\label{sec:from-HE-exact}
Obviously, {\HR ity} implies usual rationality, i.e.\ $\omega(H_2(M,L))$ being discrete implies that $\omega(\pi_2(M,L))$ also is. Furthermore, in many cases, these conditions are equivalent. This is the case, for example, when $\pi_1(M)=0$. Indeed, in this case, the relative Hurewicz morphism $\pi_2(M,L)\to H_2(M,L;\Z)$ can be shown to be surjective. Expanding on this idea, we get the following.

\begin{lem} \label{lem:omega_pi-vs-H}
	Suppose that $[\pi_1(M),\pi_1(M)]$ is finite. Then, we have that
	\begin{align*}
		N\omega(H_2(M,L;\Z))\subseteq\omega(\pi_2(M,L))+\omega(H_2(M;\Z)),
	\end{align*}
	where $N$ is the order of $[\pi_1(M),\pi_1(M)]$. In particular, if $\pi_1(M)$ is abelian, then we have equality.
\end{lem}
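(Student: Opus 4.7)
The plan is to exploit the long exact sequences of the pair $(M,L)$ in both singular homology and homotopy, together with the naturality of the Hurewicz map. Fix a basepoint $x_{0}\in L$ (possible because $L$ is connected, by the paper's convention), and let $c\in H_{2}(M,L;\Z)$ be arbitrary. I aim to produce $u\in\pi_{2}(M,L,x_{0})$ and $s\in H_{2}(M;\Z)$ with $Nc=[u]+j_{*}s$ in $H_{2}(M,L;\Z)$, after which applying $\omega$ immediately yields the claimed inclusion.

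First, I would manufacture the relative disk $u$. By exactness of the long exact sequence in singular homology, $\partial c\in H_{1}(L;\Z)$ maps to zero in $H_{1}(M;\Z)$. The abelianization $\pi_{1}(L,x_{0})\to H_{1}(L;\Z)$ is surjective (this uses connectedness of $L$ to represent $\partial c$ by a single based loop), so I can lift $\partial c$ to $\gamma\in\pi_{1}(L,x_{0})$. Its image $h:=\iota_{*}(\gamma)\in\pi_{1}(M,x_{0})$ abelianizes to zero in $H_{1}(M;\Z)=\pi_{1}(M)^{\mathrm{ab}}$, so $h$ lies in $[\pi_{1}(M),\pi_{1}(M)]$. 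Since this subgroup has order $N$, Lagrange's theorem forces $h^{N}=1$; equivalently $\gamma^{N}\in\ker(\iota_{*}\colon\pi_{1}(L,x_{0})\to\pi_{1}(M,x_{0}))$. Exactness of the homotopy long exact sequence of $(M,L)$ then produces $u\in\pi_{2}(M,L,x_{0})$ with $\partial u=\gamma^{N}$.

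Second, I would transfer to singular homology using naturality of Hurewicz, which makes the boundary map $\partial\colon\pi_{2}(M,L)\to\pi_{1}(L)$ compatible with $\partial\colon H_{2}(M,L;\Z)\to H_{1}(L;\Z)$. Thus the singular class $[u]\in H_{2}(M,L;\Z)$ satisfies $\partial[u]=[\gamma^{N}]=N[\gamma]=N\partial c$ in $H_{1}(L;\Z)$. Hence $Nc-[u]\in\ker\partial=j_{*}H_{2}(M;\Z)$, giving $s\in H_{2}(M;\Z)$ with $Nc-[u]=j_{*}s$. Pairing with $\omega$ — which descends to a well-defined functional on $H_{2}(M,L;\Z)$ because $\omega|_{L}=0$, and which coincides with the usual disk evaluation on $\pi_{2}(M,L)$ and with the inclusion-induced value on $H_{2}(M;\Z)$ — produces $N\omega(c)=\omega(u)+\omega(s)\in\omega(\pi_{2}(M,L))+\omega(H_{2}(M;\Z))$. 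When $\pi_{1}(M)$ is abelian one has $N=1$, and the reverse inclusion is trivial since both $\pi_{2}(M,L)\to H_{2}(M,L;\Z)$ and $j_{*}\colon H_{2}(M;\Z)\to H_{2}(M,L;\Z)$ intertwine the respective $\omega$-evaluations.

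The step that genuinely uses the hypothesis, and is therefore the main obstacle, is the $N$-th power trick producing $h^{N}=1$: if $[\pi_{1}(M),\pi_{1}(M)]$ were infinite there would be no uniform exponent killing $h$, and the argument would collapse. This is also precisely why the hypothesis of finiteness of the commutator subgroup cannot be dispensed with.
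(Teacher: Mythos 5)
Your proof is correct and follows essentially the same route as the paper's: both arguments are the same diagram chase through the homotopy and homology long exact sequences of $(M,L)$ linked by the Hurewicz morphisms, with the key step being that the image of the lifted boundary loop lands in $[\pi_1(M),\pi_1(M)]$ and is therefore killed by raising to the power $N$. Your explicit invocation of Lagrange's theorem and the basepoint bookkeeping are just a more spelled-out version of the paper's ``$i(Na)=0$'' step.
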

\begin{pr*}
	We consider the following commutative diagram.
	\begin{center}
		\begin{tikzcd}
			\pi_2(M) \arrow[r, "j"] \arrow[d, "h_2"] & \pi_2(M,L) \arrow[r, "\del"] \arrow[d, "h''_2"] & \pi_1(L) \arrow[r, "i"] \arrow[d, "h'_1"] & \pi_1(M) \arrow[d, "h_1"] \\
			H_2(M) \arrow[r, "j"] & H_2(M,L) \arrow[r, "\del"]  & H_1(L) \arrow[r,"i"]  & H_1(M)
		\end{tikzcd}
	\end{center}
	Here, the rows are the long exact sequences of the pair $(M,L)$ in homotopy and homology with integer coefficients, respectively, and the columns are the various Hurewicz morphisms; it commutes by naturality of the Hurewicz map. \par
	
	The proof follows from a straightforward diagram chasing argument, but we still give the details. Let $A\in H_2(M,L)$. Since $h'_1$ is surjective~---~the Hurewicz morphism in first degree is simply the abelianization morphism~---~there is some $a\in\pi_1(L)$ such that $\del (A)=h'_1(a)$. But note that 
	\begin{align*}
		h_1i(a)=ih'_1(a)=i\del(A)=0.
	\end{align*}
	Therefore, $i(a)\in\Ker h_1=[\pi_1(M),\pi_1(M)]$. By hypothesis, $i(Na)=0$. Therefore, there is some $u\in \pi_2(M,L)$ such that $\del(u)=Na$. But note that
	\begin{align*}
		\del(NA-h_2''(u))=h'_1(Na)-h'_1(\del u)=0.
	\end{align*}
	There is thus some $B\in H_2(M)$ such that $NA=j(B)+h_2''(u)$. To conclude, we only note that $\omega(j(B))=\omega(B)$ and $\omega(h''_2(u))=\omega(u)$.
\end{pr*}

Note that this corollary recovers the statement at the start of the subsection that {\HR ity} and rationality are the same when $\pi_1(M)=0$. In the case $M=D^*_rL$, the condition that $\omega(H_2(M;\Z))=0$ is automatically satisfied since $\omega$ is exact, and the condition on the commutator subgroup of $\pi_1(M)$ becomes that $[\pi_1(L),\pi_1(L)]$ be finite. {Therefore, we get directly the following result.}

\begin{cor} \label{cor:r-Vit_homotopy-vs-homology}
	{Suppose that $[\pi_1(L),\pi_1(L)]$ is finite. Then, Conjecture~\ref{conj: r Vit} is equivalent to its homotopical version, i.e.\ the one where $K$ is only rational, and we get a bound on its rationality constant.} 
\end{cor}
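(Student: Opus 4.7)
The plan is to apply Lemma~\ref{lem:omega_pi-vs-H} in the setting of Conjecture~\ref{conj: r Vit}, i.e.\ with the ambient symplectic manifold being $M = D^*_r L$. Two simplifications arise: first, the symplectic form $\omega = d\lambda_0$ is exact on $T^*L$, so $\omega(H_2(D^*_r L;\Z)) = 0$; second, $D^*_r L$ deformation retracts onto the zero section $L$, so $\pi_1(D^*_r L) = \pi_1(L)$ and the hypothesis ``$[\pi_1(L),\pi_1(L)]$ finite'' translates directly to the hypothesis of the lemma. Writing $N$ for the order of this finite commutator subgroup and applying the lemma with $K$ playing the role of ``$L$'' in its statement, we obtain
\begin{equation*}
N\,\omega(H_2(D^*_r L, K;\Z)) \;\subseteq\; \omega(\pi_2(D^*_r L, K)).
\end{equation*}
Combined with the Hurewicz-induced inclusion $\omega(\pi_2(D^*_r L, K)) \subseteq \omega(H_2(D^*_r L, K;\Z))$, this sandwich immediately implies that one of these two subgroups of $\R$ is discrete if and only if the other is. In particular, $K$ is $H$-rational if and only if it is rational; writing $\tau_K$ and $\sigma_K$ for the respective rationality constants, elementary divisibility on the sandwich yields $\tau_K \leq \sigma_K \leq N\tau_K$.

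With this comparison in hand, the equivalence of the two versions of the conjecture becomes a formal matter. Assuming the homological Conjecture~\ref{conj: r Vit} with constant $C$, let $K \subset D^*_r L$ be rational with rationality constant $\sigma_K$ and such that $(\pi_K)_*$ is not surjective on $H_1(\,\cdot\,;\R)$. By the comparison, $K$ is also $H$-rational with $\tau_K \leq \sigma_K \leq N\tau_K$; the homological conjecture applies and yields $\tau_K \leq Cr$, whence $\sigma_K \leq NCr$. The converse implication is the same argument in reverse, and the explicit relation $\tau_K \leq \sigma_K \leq N\tau_K$ is the announced bound on the rationality constant.

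I do not see a serious obstacle here: Lemma~\ref{lem:omega_pi-vs-H} does all the work, and the remainder is bookkeeping. The only point requiring mild care is the reading of ``the homotopical version'': I interpret this phrase as weakening only the rationality hypothesis while preserving the non-surjectivity condition on $H_1(\,\cdot\,;\R)$~---~which is consistent with the qualifier ``only'' in the statement. Replacing $H_1$-non-surjectivity by $\pi_1$-non-surjectivity would not follow from the comparison above: a degree-$2$ self-cover of $S^1$ is not surjective on $\pi_1$ but is surjective on $H_1(\,\cdot\,;\R)$, so such a strengthening of the homotopical version would require an entirely different argument.
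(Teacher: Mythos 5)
Your proposal is correct and follows exactly the paper's route: the paper deduces the corollary "directly" from Lemma~\ref{lem:omega_pi-vs-H} by observing, just as you do, that for $M=D^*_rL$ the term $\omega(H_2(M;\Z))$ vanishes by exactness of $\omega$ and that $\pi_1(D^*_rL)=\pi_1(L)$, giving the sandwich $N\,\omega(H_2(D^*_rL,K;\Z))\subseteq\omega(\pi_2(D^*_rL,K))\subseteq\omega(H_2(D^*_rL,K;\Z))$ and hence the comparison of rationality constants. Your reading of "the homotopical version" (weakening only the rationality hypothesis, keeping non-surjectivity on $H_1(\,\cdot\,;\R)$) matches the paper's intent, and the explicit bound $\tau_K\leq\sigma_K\leq N\tau_K$ is the "bound on its rationality constant" the statement refers to.
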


\begin{rem} \label{rem:here_vs_thm-conj-torus}
	{The homotopical version of Theorem~\ref{thm:conjecture}, that is, Theorem~\ref{thm:conjecture_torus} below, proves the homotopical version of Conjecture~\ref{conj: r Vit} when $L=L_0\times\T^m$ with $H_1(L_0;\R)=0$. This intersects with the above corollary precisely in the case when $\pi_1(L_0)$ is finite.} 
\end{rem}

Using Corollary~\ref{cor:r-Vit_homotopy-vs-homology}, we can obtain a homotopical version of Theorem~\ref{thm:nbhd-of-H-exactness}. However, the proof of Theorem~\ref{thm:from-H-exact-to-exact} truly requires {\HE ness}. One exception to this is when $\pi_2(M,L)\to\pi_1(L)$ is surjective, but an argument similar to the above shows that rationality is then equivalent to {\HR ity} as long as $q\omega(H_2(M;\Z))= \omega(\pi_2(M,L))$ for some $q\in\Q$, which is the case generically (and is always the case if $[\pi_1(L),\pi_1(L)]$ is finite).

\subsection{Why {\HR}ity in general?}
\label{sec:why-h-rationality}
We end this section with an example which showcases the need to work with {\HR} Lagrangians and not just rational Lagrangians for many applications. Note that this example is such that $[\pi_1(L),\pi_1(L)]$ is not finite.

In~\cite{Polterovich1993}, Polterovich constructs for any vector $v\in \R^n$ and any flat manifold $Q$ a Lagrangian torus $L_v$ in $T^*Q$. This torus has the property that
\begin{enumerate}%[label=(\roman*)]
	\item for a contractible open $U\subseteq Q$, $L_v\cap T^*Q|_U=U\times\{v\}\subseteq U\times\R^n$;
	\item the map $L_v\to Q$ given by restriction of $\pi:T^*Q\to Q$ is a covering.
\end{enumerate}
We concentrate our efforts on the simplest case: $n=2$ and $Q=K$ is the Klein bottle. In that case, $L_v\to K$ is the 2:1 cover. \par

First note that $L_v$ is weakly exact in $T^*K$. To see this, denote by $p:\T^2\to K$ the 2:1 cover and take $\widetilde{p}:T^*\T^2\to T^*K$ to be its lift using the flat metrics on $\T^2$ and $K$. Point (i) gives that $\widetilde{p}^{-1}(L_v)=\T^2\times\{v\}\subseteq T^*\T^2=\T^2\times\R^2$. But any disk $u$ with boundary along $L_v$ admits a lift $\widetilde{u}$ in $T^*\T^2$ with boundary along $\T^2\times\{v\}$. Since $\T^2\times\{v\}\hookrightarrow T^*\T^2$ is a homotopy equivalence, $\pi_2(T^*\T^2,\T^2\times\{v\})=0$, and the lift $\widetilde{u}$ is contractible. But then, so must be $u$, and we have that $\pi_2(T^*K,L_v)=0$. \par

On the other hand, $L_v$ is \emph{not} {\HE}. Indeed, let $\gamma:S^1\to K$ be a loop admitting a lift to $L_v$, that is, $[\gamma]\in p_*(\pi_1(\T^2))$. Since $L_v\to K$ is a 2:1 cover, there are two lifts $\widetilde{\gamma}_1$ and $\widetilde{\gamma}_2$ of $\gamma$. Furthermore, each lift $\widetilde{\gamma}_i$ defines a cylinder $C_i$ in $T^*K$ by taking $C_i(s,t)=t\widetilde{\gamma}_i(s)$, $(s,t)\in S^1\times [0,1]$. Note that $\del C_i=\widetilde{\gamma}_i\sqcup -\gamma$, where the minus sign denotes the reversal of orientation. Therefore, $C:=C_1\cup_\gamma -C_2$ is a cylinder in $T^*K$ with boundary along $L_v$. Furthermore, it has area
\begin{align*}
	\omega_0(C)=\lambda_0(\widetilde{\gamma}_1)-\lambda_0(-\widetilde{\gamma}_2)=2\int_{S^1}\langle v,\dot{\gamma}(s)\rangle ds,
\end{align*}
where $\langle\cdot,\cdot\rangle$ denotes the Euclidean scalar product. In particular, if we take $\gamma$ to be a simple loop corresponding to a straight line in the fundamental domain of $\R^2$ defining $K=\R^2/\pi_1(K)$ and $v$ to be positively proportional to $\dot{\gamma}$, then $\omega_0(C)=2|v|>0$. Therefore, such an $L_v$ is indeed not {\HE}. \par

Finally, note that, as $v\to 0$, $L_v\to K$ in the Hausdorff topology. Therefore, however small we take a neighbourhood of the zero-section of $T^*K$, there is a weakly exact Lagrangian in that neighbourhood which is not exact. {Therefore, there is no homotopical equivalent of Theorem~\ref{thm:from-H-exact-to-exact} with $\LHam(L)$ replaced by the space of all $\tau$-rational Lagrangians if $[\pi_1(L),\pi_1(L)]$ is not finite.} In particular, many applications in the introduction do not have equivalents in spaces of $\tau$-rational Lagrangians.

\section{Proof of Theorem~\ref{thm:conjecture}} \label{sec:nbhd-of-H-exactness}

We now turn our attention to Theorems~\ref{thm:conjecture} and~\ref{thm:speculation-B}. More precisely, we will prove the following weaker form of Theorem~\ref{thm:speculation-B}.
\begin{thm}
	\label{thm:nbhd-of-H-exactness}
	Suppose that $L$ is a Lagrangian submanifold of $(M,\omega)$ for which Conjecture~\ref{conj: r Vit} holds. Then, for each $\tau\geq 0$, there exists a Weinstein neighbourhood $\m W(L)$ of $L$ such that all $\tau$-{\HR} $L'$ included in $\m W(L)$ is {\HE} in $\m W(L)$.
\end{thm}

To obtain a proof, we first introduce some capacities inspired by work of Cieliebak and Mohnke~\cite{CieliebakMohnke2018} (Section~\ref{subsec:capacities}), and we explain how their finiteness implies Theorem~\ref{thm:conjecture} (Section~\ref{subsec:finiteness-capacities}). We show that this straightforwardly yields Theorem~\ref{thm:nbhd-of-H-exactness} (Section~\ref{subsec:proof-nbhd-H-exactness}). We then prove that Conjecture~\ref{conj: r Vit} is closed under covering, which concludes the proof of Theorem~\ref{thm:conjecture} (Section~\ref{subsec:conj-r-Vit_corverings}). Finally, we state and prove its homotopical variant (Section~\ref{subsec:proof_thm-conj-torus}). Note that the methods developed here will also be central to the proof of the results announced in Section~\ref{subsec:related-questions}.

\subsection{Some capacities \textit{\`a la} Cieliebak--Mohnke} \label{subsec:capacities}
In~\cite{CieliebakMohnke2018}, Cieliebak and Mohnke introduce~---~and compute in some cases~---~a capacity which measures, in a given domain, the largest possible area of a minimal disk with boundary along a Lagrangian torus. We start by introducing a small tweak in their definition, which will turn out to be quite useful in our setting. \par

Let $Q$ be a closed connected $n$-manifold. For any $2n$-dimensional symplectic manifold $(X,\omega)$, we define {three} classes of Lagrangians:
\begin{align*}
	\mathscr{L}_Q(X)
	&:=\{L=\mathrm{Im}(f:Q\hookrightarrow X)\ |\ f^*\omega=0,\ \omega(H_2(X,L;\Z))\neq 0\} \\
	{\mathscr{L}_Q^{'}(X)} 
	&:=\{L=f(Q)\in\mathscr{L}_Q(X) \ |\  \im(H_1(f)\otimes\R)\neq H_1(X;\R) \mbox{ or } H_1(X;\R)=0 \}\\
	\mathscr{L}_Q^0(X)
	&:=\{L=f(Q)\in\mathscr{L}_Q(X) \ |\  H_1(f)\otimes\R= 0\}
\end{align*}
where $H_1(f)\otimes\R$ is the map induced by $f$ on first homology with real coefficients.

\begin{lem}\label{lem:relation-between-curlyLs}
	We always have the inclusions $\mathscr{L}^{0}_Q(X)\subset\mathscr{L}^{'}_Q(X)\subset\mathscr{L}_Q(X)$.
	
	Moreover, we have the following particular cases.
	\begin{itemize}
		\item If $H_1(X;\R)=0$, we have $\mathscr{L}^{0}_Q(X) = \mathscr{L}^{'}_Q(X) = \mathscr{L}_Q(X)$.
		\item 	If $\dim H_1(Q;\R)=1$ and $X$ is exact, we have $\mathscr{L}^{0}_Q(X) = \mathscr{L}^{'}_Q(X) = \mathscr{L}_Q(X)$.
		\item  If $\dim H_1(Q;\R)\leq \dim H_1(X;\R)$ and $X$ is exact, we have $\mathscr{L}^{'}_Q(X)=\mathscr{L}_Q(X)$.
	\end{itemize}
\end{lem}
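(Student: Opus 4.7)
The plan is to reduce the entire lemma to one algebraic observation coming from exactness, after first disposing of the inclusions and the trivial particular case by unwinding definitions. First I would check that $\mathscr{L}_Q^0(X)\subset\mathscr{L}_Q^{'}(X)$: if $L=f(Q)\in\mathscr{L}_Q^0(X)$, then $H_1(f)\otimes\R=0$, so either $H_1(X;\R)=0$ (the second clause in the definition of $\mathscr{L}_Q^{'}$ is satisfied) or $\im(H_1(f)\otimes\R)=0\neq H_1(X;\R)$ (the first clause is satisfied). The inclusion $\mathscr{L}_Q^{'}(X)\subset\mathscr{L}_Q(X)$ is tautological, and the first particular case is immediate since $H_1(X;\R)=0$ forces $H_1(f)\otimes\R$ to vanish.

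Next I would extract the key consequence of exactness that drives the remaining two cases. If $X$ is exact with Liouville primitive $\lambda$, then Stokes' theorem gives $\omega(A)=\langle[\lambda|_L],\partial A\rangle$ for every $A\in H_2(X,L;\Z)$, where $\partial:H_2(X,L;\Z)\to H_1(L;\Z)$ is the connecting homomorphism. The long exact sequence of $(X,L)$ identifies $\im\partial$ with $\ker(i_*:H_1(L;\Z)\to H_1(X;\Z))$, whence
\[\omega(H_2(X,L;\Z))=[\lambda|_L]\bigl(\ker i_*\bigr).\]
Since $[\lambda|_L]\in H^1(L;\R)=\Hom(H_1(L;\Z),\R)$ annihilates torsion classes, I conclude that whenever $\ker i_*$ is pure torsion, $L\notin\mathscr{L}_Q(X)$.

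The remaining two particular cases then reduce to a dimension count. For the third case, I would argue by contradiction: suppose $L\in\mathscr{L}_Q(X)\setminus\mathscr{L}_Q^{'}(X)$, so that $H_1(X;\R)\neq 0$ and $H_1(f)\otimes\R$ is surjective. Combined with $\dim H_1(Q;\R)\leq\dim H_1(X;\R)$, this forces $H_1(f)\otimes\R$ to be an isomorphism, hence $\ker(H_1(L;\R)\to H_1(X;\R))=0$ and $\ker i_*$ is pure torsion, contradicting the observation above. The second case is then analogous: if $H_1(X;\R)=0$ the first particular case applies, while otherwise $\dim H_1(Q;\R)=1\leq\dim H_1(X;\R)$ yields $\mathscr{L}_Q^{'}(X)=\mathscr{L}_Q(X)$ by the argument just given, and for any $L\in\mathscr{L}_Q(X)$ the one-dimensionality of $H_1(L;\R)$ makes $H_1(f)\otimes\R$ either zero or injective, injectivity again rendering $\ker i_*$ torsion and thus excluded.

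I do not anticipate any substantive obstacle: everything is formal once one records that $[\lambda|_L]$ vanishes on integer torsion, so that surjectivity (respectively injectivity) of $H_1(f)\otimes\R$ controls $\ker i_*$ up to torsion rather than exactly. The only mild care needed is in keeping track of when real or integer coefficients are being used, since the definitions of the three classes are phrased at the level of $H_1(\,\cdot\,;\R)$ while the hypothesis defining $\mathscr{L}_Q(X)$ lives at the level of $H_2(X,L;\Z)$.
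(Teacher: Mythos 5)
Your proof is correct and follows essentially the same route as the paper: both arguments hinge on the observation that, for exact $X$, the long exact sequence of $(X,L)$ forces $\omega(H_2(X,L;\Z))$ to vanish whenever $H_1(f)\otimes\R$ is injective, and both reduce the third case to a rank count showing surjectivity plus the dimension hypothesis would give injectivity. Your phrasing via the integral kernel being torsion and $[\lambda|_L]$ annihilating torsion is a minor repackaging of the paper's real-coefficient version of the same step, and your deduction of the second case from the dichotomy ``zero or injective'' matches the paper's argument.
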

\begin{pr*}
	The general statement and the first point are obvious.

	Now, we fix $Q$ and $X$ as in the second point. We can assume that there is a Lagrangian embedding $f:Q\hookrightarrow X$; otherwise all sets are empty. Since $\dim H_1(Q;\R)=1$, $H_1(f)\otimes\R$ is either 0 or injective. Suppose that it is injective. By the long exact sequence in homology, we then get that the boundary map $\del:H_2(X,L;\R)\to H_1(L;\R)$ is zero, where $L=f(Q)$. Since $\omega(H_2(X,L;\R))=\lambda(\del(H_2(M,L;\R)))$ whenever $\omega=d\lambda$, we then conclude that $L$ is {\HE}. In particular, $L\notin\mathscr{L}_Q(X)$. Therefore, we have that $\mathscr{L}_Q(X)=\mathscr{L}^0_Q(X)$.

	Finally, for the third point, $\omega(H_2(X,L;\Z))\neq 0$ implies by the long exact sequence of a pair that $H_1(f)\otimes\R$ has non-trivial kernel and therefore $\dim\im(H_1(f)\otimes\R)\leq \dim H_1(X;\R)-1$.
\end{pr*}
In turn, this defines {three} capacities:
\begin{align*}
	c_Q(X) &:=\sup\{A^H_\mathrm{min}(L,X)\ |\ L\in\mathscr{L}_Q(X)\}\in [0,+\infty]; \\
	c_Q^{'}(X) &:= \sup\{A^H_\mathrm{min}(L,X)\ |\ L\in\mathscr{L}_Q^{'}(X)\}\in [0,+\infty];\\
	c_Q^0(X) &:=\sup\{A^H_\mathrm{min}(L,X)\ |\ L\in\mathscr{L}_Q^0(X)\}\in [0,+\infty],
\end{align*} where
\begin{align*}
	A^H_\mathrm{min}(L,X):=\inf\{\omega(u)\ |\ u\in H_2(X,L;\Z), \omega(u)>0\}.
\end{align*}
We take the convention that $c_Q(X)=0$ (respectively $c_Q^0(X)=0$, {$c_Q^{'}(X)=0$}) if $\mathscr{L}_Q(X)=\emptyset$ (respectively $\mathscr{L}_Q^0(X)=\emptyset$, {$\mathscr{L}_Q^{'}(X)=\emptyset$}). Obviously, we have that $c_Q^0\leq c'_Q\leq c_Q$. Finally, we set
\begin{align*}
	c_\mathrm{all}(X) &:=\sup c_Q(X), \\
	{c_\mathrm{all}^{'}(X)} &:= \sup c_Q^{'}(X), \quad \mbox{and}\\
	c_\mathrm{all}^0(X) &:=\sup c_Q^0(X),
\end{align*}
where the supremum runs over all closed connected $n$-dimensional manifolds.

\begin{rem} \label{rem:Cieliebak-Mohnke}
	The main differences between our definition and Cieliebak--Mohnke's are that we work with homology instead of homotopy, we allow any $Q$ and not only tori, and we only look at Lagrangians which do bound some homology class with nonvanishing area. The latter is central to our argument, as we will mainly be interested in the case $X=D^*Q$, but such a manifold obviously admits an exact Lagrangian $Q$. Therefore, without this restriction, $c_Q(D^*Q)$ would be infinite for trivial reasons, which runs counter to our purpose here. \par
	
	{However, we could develop an entirely analogous theory using homotopy and get some homotopical version of Theorem~\ref{thm:nbhd-of-H-exactness}~---~see Section~\ref{sec:from-HE-exact} for a discussion.}
\end{rem}

The following properties follow directly from the definition of the capacities.
\begin{lem} \label{lem:c_basic-prop}
	Let $c$ denote $c_Q$, {$c'_Q$}, $c^0_Q$, $c_\mathrm{all}$, {$c'_\mathrm{all}$} or $c_\mathrm{all}^0$. We have the two following properties.
	\begin{enumerate}%[label=(\roman*)]
		\item For all $\alpha\neq 0$, we have that $c(X,\alpha\omega)=|\alpha|c(X,\omega)$.
		\item If there is a 0-codimensional symplectic embedding $\iota:X\hookrightarrow X'$ such that $H_2(X',\iota(X);\R)=0$, then $c(X)\leq c(X')$.
	\end{enumerate}
\end{lem}

The problem with the monotonicity property (ii) when $H_2(X',\iota(X);\R)\neq 0$ is that there could then be homology classes in $X'$ with smaller area than those in $X$~---~thus inverting the expected direction of the inequality. However, the capacity $c^0_Q$ partially goes around that issue. \par

\begin{lem} \label{lem:c0_monotonicity}
	If there exists a 0-codimensional symplectic embedding $\iota:X\hookrightarrow X'$ and $X'$ is exact, then $c_Q^0(X)\leq B c_Q^0(X')$, where $B\geq 1$ only depends on the torsion part of $H_1(X;\Z)$. 
\end{lem}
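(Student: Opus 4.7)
The plan is to transport a Lagrangian $L \in \mathscr{L}_Q^0(X)$ along $\iota$, check that $\iota(L)$ lies in $\mathscr{L}_Q^0(X')$, and then relate minimal areas by exploiting exactness and the torsion bound on $H_1(L;\Z) \to H_1(X;\Z)$.

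Fix $L = f(Q) \in \mathscr{L}_Q^0(X)$. Since $X'$ is exact with primitive $\lambda_{X'}$ for $\omega_{X'}$, so is $X$, with primitive $\lambda_X := \iota^{*}\lambda_{X'}$. First I would verify $\iota(L) \in \mathscr{L}_Q^0(X')$: the Lagrangian condition is immediate from $\iota$ being symplectic; the vanishing of $H_1(\iota \circ f) \otimes \R$ follows since this map factors through $H_1(f) \otimes \R = 0$; and the nontriviality of $\omega_{X'}(H_2(X', \iota(L); \Z))$ follows because $\iota_{*} : H_2(X, L; \Z) \to H_2(X', \iota(L); \Z)$ preserves $\omega$-values, so any class in $H_2(X,L;\Z)$ with nonzero $\omega_X$-area pushes to such a class in $H_2(X', \iota(L); \Z)$.

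Next, let $T$ be the torsion subgroup of $H_1(X; \Z)$ and $N$ its exponent. Since $L \in \mathscr{L}_Q^0(X)$, the map $H_1(L; \Z) \to H_1(X; \Z)$ has image in $T$ (as $H_1(f) \otimes \R = 0$), so $N\alpha \in \ker\bigl(H_1(L; \Z) \to H_1(X; \Z)\bigr) = \im\bigl(\partial : H_2(X,L;\Z) \to H_1(L;\Z)\bigr)$ for every $\alpha \in H_1(L; \Z)$, where the last equality comes from the long exact sequence of the pair $(X,L)$. Set $B := N$, which indeed depends only on the torsion part of $H_1(X;\Z)$.

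Now, given any $u \in H_2(X', \iota(L); \Z)$ with $\omega_{X'}(u) > 0$, identify $\partial u$ with $\alpha := (\iota|_L)^{-1}_{*}\partial u \in H_1(L; \Z)$ via the diffeomorphism $\iota|_L$. Using the previous paragraph, choose $v \in H_2(X, L; \Z)$ with $\partial v = N\alpha$. Exactness gives
\[
  \omega_X(v) = \int_{\partial v} \lambda_X = N \int_{\alpha} \iota^{*}\lambda_{X'} = N \int_{\partial u} \lambda_{X'} = N\,\omega_{X'}(u) > 0,
\]
so $A^H_\mathrm{min}(L, X) \leq N\,\omega_{X'}(u)$. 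Taking the infimum over such $u$ and then the supremum over $L \in \mathscr{L}_Q^0(X)$ yields $c_Q^0(X) \leq N \cdot c_Q^0(X')$, as desired.

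There is no real obstacle here beyond bookkeeping: the key insight is that exactness of $X'$ (inherited by $X$) turns $\omega$-areas of relative classes into boundary integrals of $\lambda_{X'}$, and the $H_1 \otimes \R = 0$ condition defining $\mathscr{L}_Q^0$ is exactly what makes $N$-multiples of every boundary class realizable by a relative $2$-class in $X$. The restriction to $\mathscr{L}_Q^0$ (rather than $\mathscr{L}_Q$) is essential precisely because otherwise the image of $H_1(L;\Z)$ could contain free classes, forcing $N = \infty$.
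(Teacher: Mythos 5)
Your proof is correct and follows essentially the same route as the paper's: pull back the primitive along $\iota$, use the vanishing of $H_1(f)\otimes\R$ to place a torsion-dependent multiple of $H_1(L;\Z)$ in the image of $\partial$, and convert areas to boundary integrals of the primitive. The only (harmless, slightly sharper) difference is that you take $B$ to be the exponent of the torsion subgroup of $H_1(X;\Z)$ where the paper takes its order; both depend only on the torsion part, as required.
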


\begin{pr*}
	Let $\lambda'$ be a primitive of the symplectic form of $\omega'$ on $X'$. Then, $\lambda=\iota^*\lambda'$ is a primitive of $\omega$ on $X$. Fix $L=f(Q)\in\mathscr{L}_Q^0(X)$. Since $H_1(f)\otimes\R=0$, we must have that $f_*(H_1(Q;\Z))$ is a torsion subgroup of $H_1(X;\Z)$. Take $B$ to be the order of the torsion of $H_1(X;\Z)$ if it is nonzero, i.e.\ if
	\begin{align*}
		H_1(X;\Z)=\Z^b\oplus\Z_{p_1^{k_1}}\oplus\dots\oplus\Z_{p_\ell^{k_\ell}},
	\end{align*}
	then $B=p_1^{k_1}\dots p_\ell^{k_\ell}$. If $H_1(X;\Z)$ has no torsion, then we simply set $B=1$. We thus have $B\cdot f_*(H_1(Q;\Z))=0$. By the homology long exact sequence of the pair $(X,L)$, this is equivalent to saying that $\del H_2(X,L;\Z)\supseteq B\cdot H_1(L;\Z)$. Therefore, we have that 
	\begin{align*}
		A^H_\mathrm{min}(L,X)
		&=\inf_{\substack{a\in \del H_2(X,L;\Z) \\ \lambda(a)>0}}\lambda(a) \\
		&\leq\inf_{\substack{a\in B\cdot H_1(L;\Z) \\ \lambda(a)>0}}\lambda(a) \\
		&= B\cdot\inf_{\substack{a'\in H_1(\iota(L);\Z) \\ \lambda'(a')>0}} \lambda'(a') \\
		&\leq B\cdot A^H_\mathrm{min}(\iota(L),X').
	\end{align*} 
	Since $\iota(\mathscr{L}_Q^0(X))\subseteq \mathscr{L}_Q^0(X')$, this gives the desired inequality.
\end{pr*}
From Lemma \ref{lem:relation-between-curlyLs} above, we directly get.
\begin{lem} \label{lem:c-c0_equal}
	\begin{itemize}
		\item If $H_1(X;\R)=0$, we have $c_Q(X) =c'_Q(X)= c^0_Q(X)$.
		\item If $\dim H_1(Q;\R)=1$ and $X$ is exact, we have $c_Q(X)=c'_Q(X)=c^0_Q(X)$.
		\item  If $\dim H_1(Q;\R)\leq \dim H_1(X;\R)$ and $X$ is exact, we have $c'_Q(X)=c_Q(X)$.
	\end{itemize}
\end{lem}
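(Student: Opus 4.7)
The plan is to observe that Lemma~\ref{lem:c-c0_equal} is an immediate corollary of Lemma~\ref{lem:relation-between-curlyLs}, since the three capacities $c_Q$, $c'_Q$, $c^0_Q$ are each defined as the supremum of the same functional $L\mapsto A^H_{\min}(L,X)$ taken over the nested sets $\mathscr{L}^0_Q(X)\subseteq \mathscr{L}^{'}_Q(X)\subseteq \mathscr{L}_Q(X)$. Therefore, whenever one of the set-inclusions is an equality, the corresponding capacity-inequality is also an equality (and the convention that the capacity vanishes when the underlying set is empty is consistent with this).

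Concretely, I would proceed in three short steps. First, for the case $H_1(X;\R)=0$, Lemma~\ref{lem:relation-between-curlyLs} gives $\mathscr{L}^0_Q(X) = \mathscr{L}^{'}_Q(X) = \mathscr{L}_Q(X)$, so taking the supremum of $A^H_{\min}(\,\cdot\,,X)$ over these three sets yields the same value; hence $c^0_Q(X) = c'_Q(X) = c_Q(X)$. Second, for the case $\dim H_1(Q;\R)=1$ with $X$ exact, the same lemma again yields $\mathscr{L}^0_Q(X) = \mathscr{L}^{'}_Q(X) = \mathscr{L}_Q(X)$, and the same argument applies verbatim. Third, for the case $\dim H_1(Q;\R)\leq \dim H_1(X;\R)$ with $X$ exact, Lemma~\ref{lem:relation-between-curlyLs} gives $\mathscr{L}^{'}_Q(X)=\mathscr{L}_Q(X)$, which yields $c'_Q(X)=c_Q(X)$ (but does not in general upgrade to equality with $c^0_Q(X)$).

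There is no real obstacle here: the entire content of the statement is encoded in the set-equalities established in Lemma~\ref{lem:relation-between-curlyLs}, and the passage from sets to suprema is formal. If anything, the only thing to double-check is the edge case where the sets are empty, but our convention fixes each capacity to be zero precisely in that situation, so the equalities still hold trivially. Thus the proof will essentially amount to invoking Lemma~\ref{lem:relation-between-curlyLs} and the definition of the capacities, with no additional analytic or topological input.
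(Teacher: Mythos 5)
Your proposal is correct and matches the paper exactly: the paper derives Lemma~\ref{lem:c-c0_equal} directly from the set-equalities in Lemma~\ref{lem:relation-between-curlyLs}, since each capacity is the supremum of the same functional over the corresponding nested class of Lagrangians. Your additional remark about the empty-set convention is a harmless (and correct) bit of extra care.
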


We end this short list of properties of our capacities by proving that they behave relatively well under products. \par

\begin{lem} \label{lem:c_products}
	Suppose that $Q'$ admits a {\HE} Lagrangian embedding in $X'$. Then, $c_Q(X)\leq c_{Q\times Q'}(X\times X')$. In particular, $c_\mathrm{all}(X)\leq c_\mathrm{all}(X\times X')$ as soon as $X'$ admits a {\HE} Lagrangian. {The same holds for the corresponding $c'$ capacities} 
	
	If $Q'$ admits any Lagrangian embedding in an exact $X'$ and $H_1(Q';\R)=0$, then we have that $c^0_Q(X)\leq c^0_{Q\times Q'}(X\times X')$. In particular, $c_\mathrm{all}^0(X)\leq c_\mathrm{all}^0(X\times X')$ as soon as $X'$ admits a Lagrangian with vanishing first Betti number.
\end{lem}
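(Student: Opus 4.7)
The plan is to form a product Lagrangian. Given a Lagrangian $L = f(Q) \in \mathscr{L}_Q(X)$ and the $H$-exact Lagrangian embedding $f' : Q' \hookrightarrow X'$ provided by the hypothesis, I would set $\tilde L := L \times f'(Q')$. This is Lagrangian in $X \times X'$ with respect to $\omega = \Pi^* \omega_X + (\Pi')^* \omega_{X'}$, where $\Pi, \Pi'$ denote the two projections.

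The key step is the area computation. For any class $u \in H_2(X \times X', \tilde L; \Z)$, the decomposition of $\omega$ yields
\begin{align*}
  \omega(u) = \omega_X(\Pi_* u) + \omega_{X'}(\Pi'_* u),
\end{align*}
with $\Pi_* u \in H_2(X, L; \Z)$ and $\Pi'_* u \in H_2(X', L'; \Z)$. The $H$-exactness of $L'$ kills the second summand, so $\omega(u) \in \omega_X(H_2(X, L; \Z))$ and hence $A^H_{\min}(\tilde L, X \times X') \geq A^H_{\min}(L, X)$. The reverse inequality is immediate by pushing classes forward through the slice inclusion $X \times \{p\} \hookrightarrow X \times X'$ for some $p \in L'$, so the two area spectra in fact coincide. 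In particular $\tilde L \in \mathscr{L}_{Q \times Q'}(X \times X')$, giving $c_Q(X) \leq c_{Q \times Q'}(X \times X')$; the ``in particular'' statement for $c_{\mathrm{all}}$ follows by taking suprema over $Q$.

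The $c'$ and $c^0$ variants only require verifying the appropriate membership condition on $\tilde L$. Under the identification $H_1(X \times X'; \R) = H_1(X; \R) \oplus H_1(X'; \R)$, the image of $H_1(f \times f') \otimes \R$ splits as the direct sum of the two individual images, so it fails to equal $H_1(X \times X'; \R)$ as long as $\im(H_1(f) \otimes \R) \neq H_1(X; \R)$; the remaining subcase $H_1(X; \R) = 0$ collapses $\mathscr{L}'_Q(X) = \mathscr{L}_Q(X)$ via Lemma~\ref{lem:relation-between-curlyLs} and must be handled separately, which I expect to be the most delicate bookkeeping in the proof. For $c^0$, the assumption $H_1(Q'; \R) = 0$ forces $H_1(f') \otimes \R = 0$, so $\tilde L \in \mathscr{L}^0_{Q \times Q'}$ whenever $L \in \mathscr{L}^0_Q$; this time $\omega_{X'}(\Pi'_* u) = 0$ is obtained from the exactness of $X'$ rather than from $H$-exactness of $L'$, since if $\lambda'$ is a primitive of $\omega_{X'}$ then $\omega_{X'}(\Pi'_* u) = \lambda'(\partial \Pi'_* u)$ and $\partial \Pi'_* u \in H_1(L'; \Z)$ is torsion, so $\lambda'$ vanishes on it.
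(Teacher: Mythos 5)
Your argument is correct and is essentially the paper's own proof: form the product Lagrangian $L\times L'$, split the area of any relative class via the two projections, and kill the $X'$-contribution using $H$-exactness of $L'$ (respectively exactness of $X'$ together with $H_1(Q';\Z)$ being torsion in the $c^0$ case). The subtlety you flag for the $c'$ variant when $H_1(X;\R)=0$ but $H_1(X';\R)\neq 0$ is genuine---the product Lagrangian need not lie in $\mathscr{L}'_{Q\times Q'}(X\times X')$ unless $\im(H_1(f')\otimes\R)\neq H_1(X';\R)$---but the paper's proof silently omits this case as well, so your write-up is if anything more careful on that point.
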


\begin{pr*}
	Let $L$ be the image of a Lagrangian embedding of $Q$ in $X$, and let $L'$ be the image of a {\HE} Lagrangian embedding in $X'$. Note that we can suppose that $L$ bounds some homology class, otherwise the inequality is trivial. Let thus $v:(\Sigma,\del\Sigma)\to (X\times X',L\times L')$ for some compact surface $\Sigma$ with boundary. Projecting on each component gives maps $u:(\Sigma,\del\Sigma)\to (X,L)$ and $u':(\Sigma,\del\Sigma)\to (X',L')$. Furthermore, if $\omega$ is the symplectic form of $X$ and $\omega'$ of $X'$, we then have that
	\begin{align*}
		(\omega\oplus\omega')(v)= \omega(u)+\omega(u')=\omega(u),
	\end{align*}
	since $L'$ is $H$-exact. Taking infima over all $v$, we thus get
	\begin{align*}
		c_{Q\times Q'}(X\times X')\geq A^H_\mathrm{min}(L\times L',X\times X')=\inf_{\substack{u=pr_1\circ v \\ \omega(u)>0}}\omega(u)\geq A^H_\mathrm{min}(L,X).
	\end{align*}
	We then get the inequality by taking the supremum over all possible $L$'s.
	
	The case $H_1(Q';\R)=0$ is proven in much the same way. Indeed, exactness of $X'$ along with $H_1(Q';\R)=0$ ensures that we also have $(\omega\oplus\omega')(v)=\omega(u)$. Furthermore, the vanishing of the first Betti number ensures that $H_1(Q\times Q';\R)\to H_1(X\times X';\R)$ vanishes if and only if $H_1(Q;\R)\to H_1(X;\R)$ does.
\end{pr*}

\subsection{Finiteness of the capacities} \label{subsec:finiteness-capacities}

Having enunciated the main properties of our capacities, we now explain how one can get the first part of Theorem~\ref{thm:conjecture} from their finiteness.

\begin{prop}
	\label{prop:finiteness_to_conjecture}
	Let $L$ be a closed connected manifold. There exists $R>0$ such that $c^{'}_{\mathrm{all}}(D^*_RL)$ is finite {if and only if} Conjecture \ref{conj: r Vit} holds.
\end{prop}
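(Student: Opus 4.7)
The plan is to read off $c'_{\mathrm{all}}(D^*_R L)$ as essentially the supremum of $H$-rationality constants of the Lagrangians appearing in Conjecture \ref{conj: r Vit}, and then to use the scaling symmetry of codisk bundles to turn finiteness at one radius $R$ into a linear-in-$r$ bound at every radius.

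First, I would interpret the quantity $A^H_{\min}(K,X)$ for a Lagrangian $K = f(Q) \in \mathscr{L}_Q(X)$. The set $\omega(H_2(X,K;\Z))$ is a nontrivial subgroup of $\R$, hence either a discrete subgroup $\tau_K\Z$ (in which case $K$ is $H$-rational with $A^H_{\min}(K,X) = \tau_K > 0$), or dense in $\R$ (in which case $A^H_{\min}(K,X)=0$). Thus only $H$-rational Lagrangians with $\tau_K > 0$ contribute to the capacities. Specializing to $X = D^*_r L$, the projection $\pi : D^*_r L \to L$ is a homotopy equivalence, so $H_1(X;\R) = H_1(L;\R)$, and membership $K \in \mathscr{L}'_Q(D^*_r L)$ matches exactly the non-surjectivity of $(\pi_K)_*$ appearing in the conjecture. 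In short,
\[
 c'_{\mathrm{all}}(D^*_r L) = \sup\{\tau_K\},
\]
the supremum ranging over all closed connected $n$-manifolds $Q$ and all $H$-rational Lagrangian embeddings $K \hookrightarrow D^*_r L$ with $K \cong Q$ and $(\pi_K)_*$ non-surjective.

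Next, the fibrewise dilation $(q,p)\mapsto (q,(R/r)p)$ yields a symplectomorphism $(D^*_r L,\omega_0) \cong (D^*_R L,(r/R)\omega_0)$, so by Lemma \ref{lem:c_basic-prop}(i),
\[
 c'_{\mathrm{all}}(D^*_r L) = \frac{r}{R}\, c'_{\mathrm{all}}(D^*_R L).
\]
With these two ingredients in hand, both implications become immediate. If Conjecture \ref{conj: r Vit} holds with constant $C$ depending only on $L$ and the auxiliary metric, then every $H$-rational $K \subseteq D^*_R L$ contributing to the supremum satisfies $\tau_K \leq CR$, and therefore $c'_{\mathrm{all}}(D^*_R L) \leq CR < \infty$. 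Conversely, if $c'_{\mathrm{all}}(D^*_R L) < \infty$ for some fixed $R > 0$, set $C := c'_{\mathrm{all}}(D^*_R L)/R$; then for any $H$-rational $K \subseteq D^*_r L$ with non-surjective $(\pi_K)_*$, the case $\tau_K = 0$ is vacuous, while if $\tau_K > 0$ the identifications above give $\tau_K \leq c'_{\mathrm{all}}(D^*_r L) = Cr$. The constant $C$ depends only on $L$ and the chosen metric, as required.

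I do not anticipate a serious obstacle here: once the capacity has been seen as a supremum of rationality constants and the scaling symmetry has been invoked, the equivalence is essentially a reformulation. The only subtlety worth flagging is that the conjecture allows $\tau_K = 0$ (trivially satisfied) while $A^H_{\min}$ only detects $\tau_K > 0$, but this discrepancy is harmless on both sides.
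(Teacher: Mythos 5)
Your proposal is correct and follows essentially the same route as the paper: identify $A^H_{\min}(K,D^*_rL)$ with the $H$-rationality constant $\tau_K$ (so that $H$-irrational Lagrangians contribute $0$ and the capacity is the supremum of the relevant $\tau_K$'s), then use the fibrewise dilation $(q,p)\mapsto(q,(R/r)p)$ together with Lemma~\ref{lem:c_basic-prop}(i) to convert finiteness at one radius into the linear bound $Cr$ at every radius. The only difference is that you spell out the discrete-or-dense dichotomy for $\omega(H_2(X,K;\Z))$ explicitly, which the paper leaves implicit.
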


\begin{proof}
	Suppose that $c^{'}_{\mathrm{all}}(D^*_RL)$ is finite, and set $C:= c'_{\mathrm{all}}(D^*_R L)/R$. It follows from Property~(i) of Lemma~\ref{lem:c_basic-prop} that
	\begin{align*}
		c'_{\mathrm{all}}(D^*_r L)=\frac{r}{R}c'_{\mathrm{all}}(D^*_R L)=Cr.
	\end{align*}
	Here, we have made use of the fact that $(D^*_r L,\omega_0)$ is symplectomorphic to $(D^*_{r/a}L,a\omega_0)$ via the map $(q,p)\mapsto (q,a p)$. Note that Property~(ii) of Lemma~\ref{lem:c_basic-prop} implies that our capacity is invariant under symplectomorphisms. Let $K$ be a $H$-rational Lagrangian in $D^*_rL$, with $H$-rationality constant $\tau_K$, such that $(\pi_K)_*$ is not surjective on $H_1(\,\cdot\,;\R)$, i.e. $K$ is in $\mathscr{L}_K^{'}(D^*_rL)$. Then
	\begin{equation*}
		\tau_K= A^H_\mathrm{min}(K,D^*_rL)\leq c'_{\mathrm{all}}(D^*_r L)=Cr,
	\end{equation*}
	which is what we wanted to show. \par
	
	On the other hand, if Conjecture~\ref{conj: r Vit} holds, then $A^H_\mathrm{min}(K,D^*_RL)\leq CR$ for all {\HR} Lagrangians in $D^*_RL$ such that $(\pi_K)_*$ is not surjective on $H_1(\,\cdot\,;\R)$. Since $A^H_\mathrm{min}(K,D^*_RL)=0$ when $K$ is $H$-irrational, we directly get a bound on the capacity.
\end{proof}

Therefore, proving Theorem~\ref{thm:conjecture} reduces to proving finiteness of some capacity in cotangent bundles. In general, this turns out to be nontrivial, since even $c_{\T^n}(X)$~---~the best-behaved version of our capacities~---~is only well understood when $X$ is a convex or concave toric domain, which is far from the case we need. We will explore this further down, but we already note some interesting cases where finiteness is achievable. \par

\begin{prop} \label{prop:finiteness-products}
	If $c_{Q\times Q'}(D^*_R(Q\times Q'))<\infty$, then we have that $c_{Q}(D^*_RQ)<\infty$ and $c_{Q'}(D^*_RQ')<\infty$.
\end{prop}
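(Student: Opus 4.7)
The plan is to bound $c_Q(D^*_R Q)$ by $c_{Q\times Q'}(D^*_R(Q\times Q'))$ via a product construction, in the spirit of the proof of Lemma~\ref{lem:c_products}. Fix a product metric on $Q \times Q'$ so that $T^*(Q\times Q') \cong T^*Q \times T^*Q'$ and $D^*_R(Q\times Q') = \{|p|^2 + |p'|^2 \leq R^2\}$. Given any $L \in \mathscr{L}_Q(D^*_R Q)$, form the Lagrangian $\wt L := L \times 0_{Q'}$, where $0_{Q'}$ denotes the zero-section of $T^*Q'$. Since $0_{Q'}$ corresponds to $p' = 0$, the product $\wt L$ automatically sits inside $D^*_R(Q\times Q')$, not just inside the larger $D^*_R Q \times D^*_R Q'$.

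Next I would verify that $\wt L \in \mathscr{L}_{Q\times Q'}(D^*_R(Q\times Q'))$: it is clearly a Lagrangian embedding of $Q\times Q'$, and given any $u \in H_2(D^*_R Q, L;\Z)$ with $\om_0(u) > 0$, its image $(u, q'_0)$ under the inclusion $(D^*_R Q, L) \hookrightarrow (D^*_R(Q\times Q'), \wt L)$ (for any fixed $q'_0 \in 0_{Q'}$) yields a class in $H_2(D^*_R(Q\times Q'), \wt L;\Z)$ of the same positive area. This already gives the inequality $A^H_\mathrm{min}(\wt L, D^*_R(Q\times Q')) \leq A^H_\mathrm{min}(L, D^*_R Q)$.

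The key step is the reverse inequality. Given a relative class $v : (\Sigma, \partial\Sigma) \to (D^*_R(Q\times Q'), \wt L)$, project along the two factors of $T^*(Q\times Q') \cong T^*Q \times T^*Q'$ to obtain $u : (\Sigma, \partial\Sigma) \to (D^*_R Q, L)$ and $u' : (\Sigma, \partial\Sigma) \to (D^*_R Q', 0_{Q'})$. Since $0_{Q'}$ is exact in $T^*Q'$, Stokes' theorem gives $\om_0(u') = 0$, so $\om_0(v) = \om_0(u) + \om_0(u') = \om_0(u)$. Taking infima over classes of positive area yields equality $A^H_\mathrm{min}(\wt L, D^*_R(Q\times Q')) = A^H_\mathrm{min}(L, D^*_R Q)$, and taking supremum over $L \in \mathscr{L}_Q(D^*_R Q)$ gives $c_Q(D^*_R Q) \leq c_{Q\times Q'}(D^*_R(Q\times Q'))$. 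The corresponding bound for $Q'$ follows by symmetry.

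There is essentially no serious obstacle here: the only mild subtlety is choosing the product metric so that $\wt L$ lies inside $D^*_R(Q\times Q')$ itself (and not merely in the a priori larger $D^*_R Q \times D^*_R Q'$), which is automatic because $\wt L$ has vanishing $p'$-coordinate. Proposition~\ref{prop:finiteness-products} is therefore best viewed as a routine product monotonicity tool rather than an independent statement~---~it will be used as input for the inductive reduction to the factors $L_i$ in the proof of Theorem~\ref{thm:conjecture}.
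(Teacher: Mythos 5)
Your argument is correct. The paper's own proof is organized differently: it invokes Lemma~\ref{lem:c_products} with $X=D^*_RQ$, $X'=D^*_RQ'$ (the zero-section of $T^*Q'$ serving as the $H$-exact Lagrangian) to get $c_Q(D^*_RQ)\leq c_{Q\times Q'}(D^*_RQ\times D^*_RQ')$, then embeds $D^*_RQ\times D^*_RQ'$ symplectically into $D^*_{2R}(Q\times Q')$ as a homotopy equivalence so that the monotonicity property of Lemma~\ref{lem:c_basic-prop}~(ii) applies, and finally uses the scaling property (i) to pass from radius $2R$ back to $R$. You shortcut the last two steps by observing that $\wt L=L\times 0_{Q'}$ already sits inside $D^*_R(Q\times Q')$ itself, since its $p'$-coordinate vanishes; the projection-plus-exactness computation you then carry out inline is exactly the mechanism inside the proof of Lemma~\ref{lem:c_products}. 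Both routes are sound. Yours buys the sharper quantitative inequality $c_Q(D^*_RQ)\leq c_{Q\times Q'}(D^*_R(Q\times Q'))$ at the same radius, whereas the paper's chain only gives finiteness (which is all the proposition asserts, finiteness being radius-independent by scaling). Two minor remarks: the equality of the two $A^H_{\min}$'s is more than you need~---~only the inequality $A^H_\mathrm{min}(\wt L,D^*_R(Q\times Q'))\geq A^H_\mathrm{min}(L,D^*_RQ)$ coming from the projection step feeds into the supremum, the easy inclusion direction serving only to certify that $\wt L$ bounds a class of nonzero area and hence lies in $\mathscr{L}_{Q\times Q'}(D^*_R(Q\times Q'))$; and the choice of product metric is harmless because finiteness of these capacities is metric-independent (codisk bundles for different metrics are nested up to scaling, with inclusions that are homotopy equivalences).
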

\begin{pr*}
	It follows from Lemma~\ref{lem:c_products} that
	\begin{align*}
		c_{Q}(D^*_RQ)\leq c_{Q\times Q'}(D^*_RQ\times D^*_RQ').
	\end{align*}
	But $D^*_RQ\times D^*_RQ'$ embeds symplectically in $D^*_{2R}(Q\times Q')$ and that embedding is a homotopy equivalence. The proposition then follows directly from Property~(ii) of Lemma~\ref{lem:c_basic-prop}, since finiteness for some $R>0$ implies finiteness for every $R>0$ by Property~(i) of that lemma.
\end{pr*}

Note that if $L$ is a displaceable Lagrangian in a tame symplectic manifold, $A_{\min}^H(L)$ is a lower bound for its displacement energy~---~this follows from Chekanov's famous estimate~\cite{Chekanov1998}. In particular, $c_\mathrm{all}(B^{2n})$ is bounded by the displacement energy of $B^{2n}$, and thus it is finite. Zhou~\cite{Zhou2020} proved a broad generalization of this result using a truncated version of Viterbo's transfer map. \par

\begin{thm}[\cite{Zhou2020}] \label{thm:Zhou}
	Let $X$ be a Liouville domain with $SH(X)=0$. We have that $c_\mathrm{all}(X)<\infty$.
\end{thm}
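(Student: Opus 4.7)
The proof I would pursue proceeds by contradiction via an action-filtered closed-open map. Let $L \in \mathscr{L}_Q(X)$ with $A^H_{\min}(L, X) = \tau > 0$. Since no non-constant $J$-holomorphic disk with boundary on $L$ has symplectic area strictly between $0$ and $\tau$, the truncated Lagrangian Floer cohomology $HF^*_{<\tau}(L)$ is well defined (bubbling of small-area disks is precluded) and, in this low-action window, it agrees with Morse cohomology of $L$ via a PSS-type isomorphism. In particular, its unit $1_{HF_{<\tau}}$ is non-zero.

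First I would set up a closed-open map $\mathcal{CO}: SH^*(X) \to HF^*(L)$ respecting the action filtration and sending $1_{SH}$ to $1_{HF}$. The vanishing $SH^*(X) = 0$ will be used quantitatively: although it immediately gives $1_{SH} = 0$ in total cohomology, what I actually need is a uniform constant $E(X) > 0$, depending only on the Liouville structure of $X$, such that $1_{SH}$ is already a boundary in the action-truncated group $SH^*_{<E(X)}(X)$. One gets such a constant by running the action spectral sequence of $SH^*(X)$: since it converges to $0$ and every page only involves orbits of period bounded by the relevant Reeb data on $\partial X$, cancellation of the unit must take place at a finite stage whose action level is controlled solely by $X$.

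Combining these, whenever $\tau > E(X)$ the commuting diagram relating truncated and full Floer groups, together with naturality of $\mathcal{CO}$ under the inclusion $SH^*_{<E(X)}(X) \hookrightarrow SH^*(X)$, forces $1_{HF_{<\tau}(L)} = 0$, contradicting the previous paragraph. Therefore $A^H_{\min}(L, X) \leq E(X)$ for every $L \in \mathscr{L}_Q(X)$ and every closed connected $Q$, which gives $c_{\mathrm{all}}(X) \leq E(X) < \infty$.

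The main obstacle is the uniform action bound $E(X)$ on a primitive of $1_{SH}$: one has to convert the homological vanishing $SH^*(X) = 0$ into a \emph{quantitative} filling that does not depend on the Lagrangian $L$ under consideration. A secondary technical difficulty is setting up $\mathcal{CO}$ for a possibly obstructed, non-exact Lagrangian $L$; this is circumvented by working entirely inside the action window $(-\infty, \tau)$, where disk bubbling is impossible by the very definition of $\tau = A^H_{\min}(L, X)$, so that the Floer moduli problem and the construction of $\mathcal{CO}$ behave as in the exact case.
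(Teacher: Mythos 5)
The paper does not prove this statement at all: it is imported wholesale from \cite{Zhou2020}, with only the one-line indication that Zhou's argument uses ``a truncated version of Viterbo's transfer map'' (see also Remark~\ref{rem:Zhou-generality}, where the authors note that Zhou's actual statement is homotopical rather than homological). So there is no in-paper proof to match; what can be compared is your sketch against Zhou's published mechanism. Your route~---~a unital, action-filtered closed-open map $SH^*(X)\to HF^*(L)$ together with a quantitative vanishing of the unit~---~is a correct and essentially standard way to get the result, and it shares the decisive input with Zhou's: since $SH^*(X)=\varinjlim_\lambda HF^*(H_\lambda)$, the unit already dies at some finite slope, yielding an $L$-independent energy threshold $E(X)$. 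Where the two diverge is in the target of the map. Zhou transfers to the action-truncated symplectic cohomology of a small Weinstein neighbourhood of $L$, which keeps the whole argument inside Hamiltonian Floer theory and sidesteps the need to define Floer cohomology of a possibly obstructed, non-exact $L$; you instead set up truncated Lagrangian Floer cohomology for $L$ directly, which is heavier to justify but reaches the same unit-chasing contradiction. Each buys something: the transfer-map version is technically lighter on the Lagrangian side, while the closed-open version makes the role of $A^H_{\min}$ as a bubbling threshold completely transparent.

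Two points in your sketch deserve tightening. First, the ``action spectral sequence'' framing for producing $E(X)$ is the wrong lens: the clean statement is simply that a class dying in a direct limit dies at a finite stage, and what actually requires an argument is that the energies of the closed-open curves for that fixed finite-slope Hamiltonian are bounded \emph{uniformly over all} $L\subseteq X$; this holds because $L$ lies in the compact part of $X$, where $H_\lambda$ is uniformly bounded and only its slope along the collar matters. Second, be careful that the bubbling threshold is governed by \emph{disk} classes, i.e.\ by $A^D_{\min}\geq A^H_{\min}$ (the infimum over the image of $\pi_2(X,L)$ dominates the infimum over all of $H_2(X,L;\Z)$), so truncating below $\tau=A^H_{\min}$ is indeed safe~---~but this inequality is exactly the homotopy-versus-homology discrepancy the paper flags when citing Zhou, and it should be stated rather than left implicit.
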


Note that $SH(D^*_RL)\neq 0$ because of Viterbo's isomorphism~\cite{Viterbo1999}. Therefore, Zhou's theorem never directly implies Theorem~\ref{thm:nbhd-of-H-exactness}. However, in some cases, we still manage to compare $c_Q(D^*L)$ to $c_\mathrm{all}(X)$ as we shall see below. \par

\begin{rem} \label{rem:Zhou-generality}
	Zhou actually works with homotopy~---~not homology like us~---~and allows for the possibility of weakly exact Lagrangians. He also allows some nonexact Liouville domains, but it will not be needed here. Therefore, his result is actually more general than what is cited here.
\end{rem}

\begin{thm} \label{thm:products}
	Let $L_1$, ..., $L_k$ be closed manifolds with $\dim H_1(L_i;\R)=1$ such that $L_i$ embeds as a Lagrangian in a Liouville domain $W_i$ such that $SH(W_i)=0$. Let $L_0$ be a closed manifold with $H_1(L_0;\R)=0$. We have that
	\begin{align*}
		c^{'}_{\mathrm{all}}\left(D^*_{r_0}L_0\times\dots\times D^*_{r_k}L_k\right)<\infty
	\end{align*}
	for small enough $r_i$'s. Moreover, the same is true for $c_Q$ for a closed connected manifold $Q$ with $\dim H_1(Q;\R)\leq k.$ 
\end{thm}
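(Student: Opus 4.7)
The plan is to embed $X := D^*_{r_0} L_0 \times D^*_{r_1} L_1 \times \cdots \times D^*_{r_k} L_k$ into a Liouville domain $X^*$ with vanishing symplectic homology, invoke Zhou's Theorem~\ref{thm:Zhou} there, and transfer the resulting bound to $X$ via an adaptation of the argument of Lemma~\ref{lem:c0_monotonicity}.

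First, for each $i \geq 1$, Weinstein's neighborhood theorem gives, for $r_i > 0$ sufficiently small, a symplectic embedding $D^*_{r_i} L_i \hookrightarrow W_i$. Combining these yields an open symplectic embedding $\iota : X \hookrightarrow X^* := D^*_{r_0} L_0 \times W_1 \times \cdots \times W_k$ into a Liouville domain. By the Künneth formula for symplectic homology,
\[
SH(X^*) \cong SH(D^*_{r_0} L_0) \otimes \bigotimes_{i=1}^k SH(W_i) = 0,
\]
since each $SH(W_i) = 0$. Theorem~\ref{thm:Zhou} therefore gives $c_\mathrm{all}(X^*) < \infty$.

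To bound $c'_\mathrm{all}(X)$, fix $L = f(Q) \in \mathscr{L}'_Q(X)$. Because $H_1(L_0;\R) = 0$ and $\dim H_1(X;\R) = k$, the non-surjectivity of $H_1(f) \otimes \R$ forces the image of $i_{L*} : H_1(L;\Z) \to H_1(X;\Z) \cong \bigoplus_{i=1}^k H_1(L_i;\Z)$ to have $\Z$-rank $r \leq k - 1$. Smith normal form then produces a basis $e_1, \dots, e_m$ of $H_1(L;\Z)/\mathrm{tor}$ with $i_{L*}(e_j)$ torsion for $j > r$, so that $B \cdot \langle e_{r+1}, \dots, e_m \rangle \subseteq \ker i_{L*} = \partial H_2(X, L;\Z)$, where $B$ is the order of the torsion of $H_1(X;\Z)$, a constant depending only on the $L_i$'s. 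Mirroring the computation in Lemma~\ref{lem:c0_monotonicity} yields
\[
A^H_\min(L, X) \leq B \cdot \inf_{\substack{a \in \langle e_{r+1}, \dots, e_m \rangle \\ \lambda(a) > 0}} \lambda(a).
\]
Since the same lattice sits in $\ker i^{X^*}_{L*}$ up to the factor $B$, combining this with Zhou's bound $A^H_\min(\iota(L), X^*) \leq c_\mathrm{all}(X^*)$ gives $c'_\mathrm{all}(X) \leq B' \cdot c_\mathrm{all}(X^*) < \infty$ for a uniform constant $B'$. The moreover statement then follows from the third case of Lemma~\ref{lem:c-c0_equal}: for any closed connected $Q$ with $\dim H_1(Q;\R) \leq k = \dim H_1(X;\R)$, one has $c_Q(X) = c'_Q(X) \leq c'_\mathrm{all}(X) < \infty$.

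The main obstacle is the last step of the transfer: bounding $\inf_{\langle e_{r+1}, \dots, e_m \rangle} \lambda$ uniformly in $L$. Unlike the $\mathscr{L}^0$ setting of Lemma~\ref{lem:c0_monotonicity}, where this lattice is all of $H_1(L;\Z)/\mathrm{tor}$ and Zhou's bound applies directly, the presence of the ``image directions'' $e_1, \dots, e_r$ here can a priori absorb arbitrary action on $H_1(L;\Z)$. Making the reduction precise requires a careful diagonal argument leveraging the product structure of $X$ and comparing the long exact sequences of the pairs $(X, L)$ and $(X^*, \iota(L))$, so as to ensure the torsion constant stays uniform across all $L$ and $Q$.
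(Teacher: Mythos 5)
Your overall strategy -- embed into a Liouville domain with vanishing $SH$, apply Zhou's theorem there, and transfer the bound back -- is the right one, and your use of the K\"unneth formula and of the third case of Lemma~\ref{lem:c-c0_equal} for the ``moreover'' part matches the paper. But the transfer step is a genuine gap, and you essentially concede this in your last paragraph. The problem is that you replace \emph{all} factors $D^*_{r_i}L_i$ by $W_i$ at once. The resulting $X^* = D^*_{r_0}L_0\times W_1\times\dots\times W_k$ may well have $H_1(X^*;\R)=0$ (e.g.\ if the $W_i$ are balls), in which case $\ker\big(H_1(L;\Z)\to H_1(X^*;\Z)\big)$ is, up to torsion, all of $H_1(L;\Z)$, strictly larger than $\ker i_{L*}=\partial H_2(X,L;\Z)$. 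Consequently $A^H_{\min}(\iota(L),X^*)$ is an infimum over a strictly larger set of boundary classes than $A^H_{\min}(L,X)$, so the inequality you need, $A^H_{\min}(L,X)\leq B'\,A^H_{\min}(\iota(L),X^*)$, goes the wrong way: the positive-area classes in $H_2(X^*,\iota(L))$ realizing Zhou's bound need not lift to classes in $H_2(X,L)$ at all. This is exactly the failure mode flagged after Lemma~\ref{lem:c_basic-prop}(ii), and the ``careful diagonal argument'' you defer to is the actual content of the proof, not a technicality.

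The paper's resolution is an index-selection trick that you are missing. Since $\operatorname{im}(f_*)\subseteq H_1(X;\R)\cong\R^k$ has dimension at most $k-1$, there is at least one index $i\in\{1,\dots,k\}$ such that the coordinate projection $\pi_i:\R^k\to\{x_i=0\}$ is injective on $\operatorname{im}(f_*)$. One then replaces \emph{only} the $i$-th factor, embedding $X$ into $X'_i=D^*_{r_1}L_1\times\dots\times W_i\times\dots\times D^*_{r_k}L_k$. Injectivity of $\pi_i$ on the image guarantees $\ker f_*=\ker(\Psi_i\circ f)_*$, so every class $u'\in H_2(X'_i,L;\Z)$ with nonzero area has $\partial u'\in\ker f_*$ and hence $N\,\partial u'=\partial u$ for some $u\in H_2(X,L;\Z)$ with $N$ dividing a torsion constant $B_i$; exactness of the embedding then gives $A^H_{\min}(L,X)\leq B_i\,A^H_{\min}(L,X'_i)\leq B_i\,c_{\mathrm{all}}(X'_i)$, and $SH(X'_i)=0$ by K\"unneth since $SH(W_i)=0$. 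Without this choice of a single factor adapted to $L$, the argument does not close.
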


\begin{proof}[Proof of Theorem~\ref{thm:products}]
	We start with the case $L_0$ is a point and remove it from the notation for now. Let $L$ be a Lagrangian in the product of codisk bundles $X$ diffeomorphic to $Q$ and such that $\omega(H_2(X,L))\neq 0$. Further suppose that $f_*:H_1(Q;\R)\rightarrow H_1(X;\R)$ is not surjective. 
	Denote by $$p_i:L_1\times\dots\times L_k\to L_1\times\dots\times\widehat{L_i}\times\dots\times L_k$$ the projection onto the product of all $L_j$'s except $L_i$ and by $\Pi_i$ its lift between products of cotangent bundles. \par
	
	Since $f_*$ is not surjective, its image $I$ is a vector subspace of $H_1(X;\R)$ of dimension less than or equal to $k-1$. Therefore, there exists at least one $i\in\{1,\dots,k\}$ such that, under the identification $H_1(X;\R)\cong\R^k$, the projection $\pi_i:\R^k\rightarrow R_i=\{x\in\R^k\,|\,x_i=0\}$ is injective on $I$. Note that $(\Pi_i)_*$ is naturally identified with $\pi_i$. Therefore, {for that $i$,} we have that $\ker f_*=\ker(\Pi_i\circ f)_*$.\par
	
	Take $r_i$ small enough so that $D^*_{r_i}L_i$ symplectically embeds in $W_i$ as in the statement of the theorem. Denote by $$\Psi_i:X\hookrightarrow X'_i:=D^*_{r_1}L_1\times\dots\times W_i\times \dots\times D^*_{r_{k}}L_{k}$$ the symplectic embedding it induces and by $\om_i'$ the symplectic form on $X_i'$. Note that $\ker(\Pi_i\circ f)_*=\ker(\Psi_i\circ f)_* $. \par
	
	Suppose $u'\in H_2(X_i',L;\Z)$ such that $\om'_i(u')\neq0$. This implies that $\del u'$ is in $\ker(\Psi_i\circ f)_*=\ker f_*$ in $H_1(Q;\R$). In particular, there is some $N\in\Z$ and some $u\in H_2(X,L;\Z)$ such that $\del u= N\del u'$. {Furthermore, some diagram chasing gives that $N$ must divide the order $B_i$ of the torsion part of $H_i(D^*_{r_i}L_i)$.} Moreover, since $\Psi_i$ is an exact symplectic embedding, we have $\om(u)=N\om'(u')$. Hence
	\begin{align*}
		A^H_{\min}(L,X)\leq  B_i A^H_{\min}(L,X'_i)\leq B_i c_{\mathrm{all}}(X'_i)
	\end{align*}
	Taking the supremum over all possible $L$, we thus get
	\begin{align*}
		c^{'}_Q(X)\leq\max_i  B_i c_{\mathrm{all}}(X'_i),
	\end{align*}
	and finiteness follows from Zhou's theorem. \par
	
	When $L_0$ is not a point, one can check that the entire proof above follows in the same way.
\end{proof}

\subsection{Proof of Theorem~\ref{thm:nbhd-of-H-exactness}}
\label{subsec:proof-nbhd-H-exactness}
For completeness, we now give a quick proof of how Theorem~\ref{thm:nbhd-of-H-exactness} follows from Conjecture~\ref{conj: r Vit} when it holds.
\begin{proof}[Proof of Theorem~\ref{thm:nbhd-of-H-exactness}]
	For all sufficiently small $r>0$, there is a Weinstein neighbourhood $W_r(L)\cong D_r^*L$ of $L$ in $M$. Let $L'\in \m L(L,\tau)$, and suppose that the map $(\pi|_{L'})_*:H_1(L';\R)\rightarrow H_1(L;\R)$, induced by the restriction of the projection $\pi:D^*L\rightarrow L$ to $L'$ is not surjective. Then, there is some positive integer $k$ such that $L'$ has rationality constant $k\tau$ in $\m W_r(L)$. If Conjecture~\ref{conj: r Vit} holds, then we have that $k\tau\leq Cr$ for some $C=C(L)>0$. We thus get a contradiction if $r<\tau/C$. \par

	For such $r$, $(\pi|_{L'})_*:H_1(L';\R)\rightarrow H_1(L;\R)$ must thus be surjective whenever $L'\subseteq \m W_r(L)$. Since $H_1(L';\R)\cong H_1(L;\R)$, this means that it is an isomorphism, and the result follows from Lemma~\ref{lemma:central}.
\end{proof}

We remark that using the methods above and the $c^{0}$ variant of the capacities, we can show the following interesting {partial result}.

\begin{thm} \label{thm:nbhd-nontrivial-projection}
	Let $L$ be a Lagrangian submanifold in $M$. Suppose that, as an abstract manifold, $L$ admits a Lagrangian embedding in a Liouville domain $W$ with $SH(W)=0$. For every $\tau\geq 0$, there exists a Weinstein neighbourhood $\m W(L)$ of $L$ in $M$, such that if $L' \in \m L(\tau)$ is included in $\m W(L)$, then the map $\pi_{*} : H_1(L';\R)\to H_1(L;\R)$ induced by the projection is nonzero. 
\end{thm}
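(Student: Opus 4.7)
The plan is to adapt the proof of Theorem~\ref{thm:nbhd-of-H-exactness} using the $c^0$-variant of the Cieliebak--Mohnke-style capacity from Section~\ref{subsec:capacities} rather than $c^{'}$. The point is that the vanishing of $\pi_*: H_1(L';\R) \to H_1(L;\R)$ is precisely what places a non-$H$-exact $L'$ inside the class $\mathscr{L}^0_{L'}(D^*_r L)$; a uniform bound of the form $c^0_\mathrm{all}(D^*_r L) < \tau$ will then rule out such $L'$, while the $H$-exact case is handled directly by the central lemma (Lemma~\ref{lemma:central}).

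Concretely, the hypothesis provides a Lagrangian embedding of $L$ as an abstract manifold into $W$, which by the Weinstein neighbourhood theorem extends to a $0$-codimensional symplectic embedding $\iota : D^*_{r_0} L \hookrightarrow W$ for some $r_0 > 0$. Since $SH(W) = 0$, Theorem~\ref{thm:Zhou} gives $c_\mathrm{all}(W) < \infty$, and Lemma~\ref{lem:c0_monotonicity} applied to $\iota$ yields
\begin{align*}
c^0_\mathrm{all}(D^*_{r_0}L) \leq B_L \cdot c^0_\mathrm{all}(W) \leq B_L \cdot c_\mathrm{all}(W) =: C_L < \infty,
\end{align*}
where $B_L \geq 1$ depends only on the torsion of $H_1(L;\Z)$. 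Property~(i) of Lemma~\ref{lem:c_basic-prop} (scaling) then gives $c^0_\mathrm{all}(D^*_r L) \leq (r/r_0) C_L$ for all $r \in (0, r_0]$. Choose $r \in (0, r_0]$ small enough so that $\m W_r(L) \cong D^*_r L$ is a genuine Weinstein neighbourhood in $M$ and so that $(r/r_0) C_L < \tau$ when $\tau > 0$; set $\m W(L) := \m W_r(L)$.

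Now suppose towards contradiction that $L' \in \m L(\tau)$ is contained in $\m W(L)$ with $\pi_* = 0$ (the claim is vacuous when $H_1(L;\R) = 0$, so assume $H_1(L;\R) \neq 0$). If $L'$ is $H$-exact in $D^*_r L$---which is automatic when $\tau = 0$, since $\omega(H_2(D^*_r L, L';\Z))$ embeds into $\omega(H_2(M, L';\Z)) = 0$---then Lemma~\ref{lemma:central} forces the composition $L' \hookrightarrow D^*_r L \to L$ to be a homotopy equivalence, so $\pi_*$ is an isomorphism, contradicting $\pi_* = 0$. Otherwise $\omega(H_2(D^*_r L, L';\Z)) \neq 0$, which combined with $\pi_* = 0$ places $L'$ in $\mathscr{L}^0_{L'}(D^*_r L)$; hence
\begin{align*}
A^H_\mathrm{min}(L', D^*_r L) \leq c^0_{L'}(D^*_r L) \leq c^0_\mathrm{all}(D^*_r L) < \tau.
\end{align*}
On the other hand, $\omega(H_2(D^*_r L, L';\Z))$ is a nonzero subgroup of $\omega(H_2(M, L';\Z)) = \tau\Z$, so its smallest positive element is at least $\tau$, giving $A^H_\mathrm{min}(L', D^*_r L) \geq \tau$---a contradiction. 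The main technical obstacle is really obtaining the finiteness of $c^0_\mathrm{all}$ in the cotangent bundle; once the Liouville embedding hypothesis and Zhou's theorem supply this, the argument is a formal variant of the proof of Theorem~\ref{thm:nbhd-of-H-exactness}, with the minor bookkeeping subtlety that the relative periods of $L'$ seen inside $D^*_r L$ form a sub-lattice of $\tau\Z$, which is enough to force $A^H_\mathrm{min} \geq \tau$.
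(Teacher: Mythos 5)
Your proof is correct and follows essentially the same route as the paper's (which is only sketched there): finiteness of $c^0_{\mathrm{all}}(D^*_{r_0}L)$ via the Weinstein embedding into $W$, Lemma~\ref{lem:c0_monotonicity}, and Zhou's theorem, then scaling and the dichotomy between the $H$-exact case (handled by Lemma~\ref{lemma:central}) and the case $L'\in\mathscr{L}^0_{L'}(D^*_rL)$ (capacity bound against $A^H_{\mathrm{min}}\geq\tau$). The one imprecision is your parenthetical that the statement is ``vacuous'' when $H_1(L;\R)=0$ --- it would actually be \emph{false} then, since $\pi_*$ would automatically vanish; what legitimizes the assumption $H_1(L;\R)\neq 0$ needed in your $H$-exact branch is that the hypothesis of a Lagrangian embedding into a Liouville domain with vanishing symplectic homology forces $H_1(L;\R)\neq 0$ via the Viterbo transfer map, exactly as recorded in Remark~\ref{rem:nbhd-nontrivial-projection}.
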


\begin{proof}[Sketch of proof] 
	By adapting the proof of Proposition \ref{prop:finiteness_to_conjecture}, one can see that finiteness of $c_{\mathrm{all}}^0(D^*_R L)$ for some $R>0$ is equivalent to the following version of Conjecture~\ref{conj: r Vit}: if $K$ is an $H$-rational Lagrangian inside $D^*_r L$ such that the map $(\pi|_K)_*: H_1(K;\R) \to H_1(L;\R)$ is zero, then the $H$-rationality constant $\tau_K$ of $K$ satisfies $\tau_K \leq C r$ for a constant $C=C(L)$. \par
	
	However, the finiteness of $c_{\mathrm{all}}^0(D^*_R L)$ for some $R>0$ follows directly from the Weinstein neighbourhood theorem, the monotonicity of the $c^0$ capacities (Lemma~\ref{lem:c0_monotonicity}), and Zhou's Theorem. 
	
	Replicating the proof of Theorem~\ref{thm:nbhd-of-H-exactness}, one gets the existence of a neighbourhood $\m W(L)$ such that whenever $L'\in\m L(\tau)$ is in $\m W(L)$, then $H_1(L';\R)\to H_1(L;\R)$ is nonzero.
\end{proof}

\begin{rem} \label{rem:nbhd-nontrivial-projection}
	Since $L$ embeds as a Lagrangian in a Liouville domain with $SH(W)=0$, so does any nearby $L'$. The Viterbo transfer morphism then implies that $H_1(L;\R)$ and $\ H_1(L';\R)$ are nonzero (see~\cite{Ritter2013}). Therefore, $H_1(L';\R)\to H_1(L;\R)$ being nonzero is consistent with the hypotheses on $L$ and $L'$.
\end{rem}

\subsection{Conjecture \ref{conj: r Vit} and covering spaces} \label{subsec:conj-r-Vit_corverings}
The condition on the Lagrangian $K\subset D^*_rL$ in Conjecture \ref{conj: r Vit}, i.e. non-surjectivity of $(\pi_K)_*$, while technical, has the advantage of behaving well with covering spaces. This is highlighted by the following proposition, which proves the second part of Theorem \ref{thm:conjecture}.

\begin{prop}\label{prop: cover}
	Suppose that Conjecture \ref{conj: r Vit} holds for a closed connected manifold $L'$, then it holds for all $L$ such that there exists a covering map $\pi:L'\ra L$.
\end{prop}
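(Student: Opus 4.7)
I denote by $\pi: L' \to L$ the given covering and by $p: T^*L \to L$, $p': T^*L' \to L'$ the canonical cotangent projections. First I would endow $L'$ with the pullback metric $g' := \pi^* g$ from an arbitrary fixed metric $g$ on $L$, so that $\pi$ becomes a local isometry. This lifts to a symplectic covering $\tilde\pi: D^*_r L' \to D^*_r L$ of finite degree $d = \deg(\pi)$ (finite because $L$ is closed), satisfying $\tilde\pi^* \lambda = \lambda'$ for the canonical Liouville forms. In particular, $\tilde\pi$ is a local symplectomorphism that preserves the cotangent radius.

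Given $K \subset D^*_r L$ satisfying the hypotheses of Conjecture \ref{conj: r Vit}, the plan is to lift $K$ to a Lagrangian of $D^*_r L'$ satisfying the hypotheses of Conjecture \ref{conj: r Vit} at $L'$, apply the conjecture, and push the bound back down. Concretely, let $\tilde K_0$ be any connected component of $\tilde\pi^{-1}(K)$. Then $\tilde K_0$ is a closed connected Lagrangian of $D^*_r L'$, and $\tilde\pi_0 := \tilde\pi|_{\tilde K_0}: \tilde K_0 \to K$ is a finite covering of some degree dividing $d$. Because $\tilde\pi_*$ preserves areas of relative 2-cycles, I have $\omega(H_2(D^*_r L', \tilde K_0; \Z)) \subseteq \omega(H_2(D^*_r L, K; \Z)) = \tau_K \Z$, so $\tilde K_0$ is $H$-rational with $\tau_{\tilde K_0} = m \tau_K$ for some integer $m \geq 0$. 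Moreover, the map $(p'|_{\tilde K_0})_*: H_1(\tilde K_0; \R) \to H_1(L'; \R)$ is not surjective: if it were, then composing with the transfer-induced surjection $\pi_*: H_1(L'; \R) \to H_1(L; \R)$ and using that $\pi \circ p'|_{\tilde K_0} = p|_K \circ \tilde\pi_0$ would force $(p|_K)_*$ to be surjective, contradicting the hypothesis on $K$.

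The main obstacle is ruling out the degenerate case $m = 0$, namely that $\tilde K_0$ is $H$-exact, in which case the bound $\tau_{\tilde K_0} \leq C_{L'} r$ delivered by the conjecture for $L'$ gives no information about $\tau_K$. To exclude this, I would invoke the central lemma (Lemma \ref{lemma:central}): $H$-exactness of $\tilde K_0$ would imply that the projection $\tilde f := p'|_{\tilde K_0}: \tilde K_0 \to L'$ is a homotopy equivalence. Chasing the commutative square
\[
\begin{tikzcd}
\tilde K_0 \arrow[r, "\tilde f"] \arrow[d, "\tilde\pi_0"'] & L' \arrow[d, "\pi"] \\
K \arrow[r, "f"] & L
\end{tikzcd}
\]
(with $f := p|_K$), one then obtains $f_*(\pi_1 K) \supseteq f_*\tilde\pi_{0*}(\pi_1 \tilde K_0) = \pi_* \tilde f_*(\pi_1 \tilde K_0) = \pi_*(\pi_1 L')$, which is a subgroup of index $d$ in $\pi_1 L$. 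Abelianizing and tensoring with $\R$ kills the resulting finite cokernel, so $(p|_K)_*: H_1(K; \R) \to H_1(L; \R)$ becomes surjective, contradicting the hypothesis.

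Hence $m \geq 1$, and applying the hypothesized validity of Conjecture \ref{conj: r Vit} for $L'$ to $\tilde K_0$ yields the chain $\tau_K \leq \tau_{\tilde K_0} \leq C_{L'} r$, which establishes the conjecture for $L$ with $C := C_{L'}$ (the radius $r$ being preserved on both sides thanks to the choice of pullback metric). The hardest step, in my view, is the exclusion of the $H$-exact lift, as it relies through the central lemma on the nontrivial input of the Abouzaid--Kragh theorem; the rest of the argument is purely formal manipulation of covering spaces and the long exact sequence.
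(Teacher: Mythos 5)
Your proof is correct and follows essentially the same route as the paper: lift the covering to the codisk bundles using the pullback metric, pass to a connected component of the preimage of $K$, observe that it is $H$-rational with constant $m\tau_K$ and that non-surjectivity of $(\pi_K)_*$ on $H_1(\,\cdot\,;\Q)$ forces non-surjectivity upstairs (via the surjectivity of $H_1(L';\Q)\to H_1(L;\Q)$ for finite covers), then apply the conjecture for $L'$ and descend the bound. The only point where you go beyond the paper's write-up is your explicit exclusion of the degenerate case $m=0$ via Lemma~\ref{lemma:central}~---~the paper simply asserts $m\geq 1$~---~and your observation that $H$-exactness of the lift would make $\tilde K_0\to L'$ a homotopy equivalence and hence force surjectivity of $(\pi_K)_*$ downstairs is precisely the justification that step needs.
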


Before we prove Proposition \ref{prop: cover}, we require the following lemma from group theory. For a group $G$, denote by $\mrm{Ab}(G) = G/[G,G]$ its abelianization.

\begin{lem}\label{lma: ab rank}
	Suppose $H$ is a subgroup of finite index in a group $G.$ Then the inclusion $i:H \to G$ induces a surjection \[\Ab(H) \otimes \Q \to \Ab(G) \otimes \Q .\]
\end{lem}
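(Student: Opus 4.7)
The plan is to invoke the classical transfer (Verlagerung) homomorphism from group theory to produce a map going the ``wrong way'' whose composition with $i_*$ is multiplication by $n:=[G:H]$. More precisely, I will construct a homomorphism $V:\Ab(G)\to\Ab(H)$ such that $i_*\circ V=n\cdot\mathrm{id}_{\Ab(G)}$. After tensoring with $\Q$, multiplication by the nonzero integer $n$ becomes an automorphism of $\Ab(G)\otimes\Q$, and it immediately follows that $i_*\otimes\Q$ is surjective.

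To build $V$, I will fix a set of right coset representatives $g_1,\ldots,g_n$ for $H\backslash G$. For each $g\in G$ and each index $i$, there are unique elements $h_i(g)\in H$ and a permutation $\sigma_g\in\Sym(n)$ determined by $g_i g=h_i(g)\,g_{\sigma_g(i)}$. Setting $V(g):=\prod_i h_i(g)\in\Ab(H)$ defines a map $V:G\to\Ab(H)$; a short standard check (writing out the formula for $g_i(gg')$ in two different ways and using that $\Ab(H)$ is abelian) shows that $V$ is a homomorphism, and one checks similarly that it is independent of the choice of coset representatives, so it factors through $\Ab(G)$.

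The key identity is then a direct computation in $\Ab(G)$: from $h_i(g)=g_i\,g\,g_{\sigma_g(i)}^{-1}$ one obtains
\[
i_*(V(g))=\prod_i g_i\cdot g^n\cdot\prod_i g_{\sigma_g(i)}^{-1}=g^n,
\]
the last equality holding because $\sigma_g$ permutes the indices, so $\prod_i g_{\sigma_g(i)}^{-1}=\prod_i g_i^{-1}$ in the abelian group $\Ab(G)$, and these cancel with $\prod_i g_i$. This yields $i_*\circ V=n\cdot\mathrm{id}_{\Ab(G)}$, and the lemma follows as indicated. There is no real obstacle here: the statement is classical, and the only mildly tedious step is verifying that $V$ is a well-defined homomorphism, which is entirely routine.
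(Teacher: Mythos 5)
Your proof is correct, but it follows a genuinely different route from the one in the paper. You build the transfer (Verlagerung) homomorphism $V:\Ab(G)\to\Ab(H)$ and use the classical identity $i_*\circ V=n\cdot\mathrm{id}_{\Ab(G)}$ with $n=[G:H]$, which becomes an automorphism after tensoring with $\Q$; your computation of $i_*(V(g))=g^n$ in $\Ab(G)$ is right, and the well-definedness checks you defer are indeed standard. The paper instead dualizes: a linear map of $\Q$-vector spaces is surjective if and only if its dual is injective, and $\Hom_\Q(\Ab(G)\otimes\Q,\Q)=\Hom(G,\Q)$, so it suffices to show that the restriction $\Hom(G,\Q)\to\Hom(H,\Q)$ is injective. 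If $\nu:G\to\Q$ vanishes on $H$, then $\nu$ is constant on each of the finitely many cosets of $H$, so its image is a finite subgroup of the torsion-free group $\Q$ and hence trivial. The paper's argument is shorter and sidesteps the verification that the transfer is a well-defined homomorphism independent of the choice of coset representatives; yours yields a sharper integral statement, namely that the cokernel of $\Ab(H)\to\Ab(G)$ is annihilated by $n$ rather than merely being torsion. That refinement is not needed for the covering argument in Proposition~\ref{prop: cover}, where only surjectivity over $\Q$ is used, but it is genuine extra information.
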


\begin{proof}
	Observe that $\rank \Ab(G) = \dim_{\Q} \Hom(G,\Q).$ {It thus suffices to prove that the restriction map \[r: \Hom(G,\Q) \to \Hom(H,\Q)\] is injective.} Suppose $\nu \in \ker(r),$ i.e.\ $\nu|_H \equiv e_H$. Let $H = a_1 H, a_2 H, \ldots, a_k H$ be the right cosets of $H$ in $G.$ Then $\im(\nu) = \{e_H,\nu(a_2),...,\nu(a_k)\} \subset \Q$ is a finite subset. Since $\Q$ has no non-zero elements of finite order, this yields that $\nu \equiv e_H.$ 
\end{proof}

\begin{proof}[Proof of Proposition \ref{prop: cover}]
	First, extend the covering $p: L' \to L$ to a covering $$p: W'_r = D^*_r L' \to W_r = D^*_r L$$ for the metric $g' = p^* g$ on $L'.$ Look at a connected component $K'$ of $p^{-1}(K).$ Then $p_{K'} = p|_{K'}:K' \to K$ is a covering of $K.$ Denote by $\om' = p^*\om$ and $\la' =p^*\la$ the symplectic and Liouville forms on $W'_r$, and set $\la'_{K'} = \la'|_{K'},\ \la_K = \la|_K.$ It is easy to see that $K'$ is $H$-rational in $D^*_r L'.$ Indeed, given $A \in H_2(K', W'_r),$ we have $\brat{\om', A} = \brat{\om, p_* A},$ which is an integer multiple of $\tau_K$. Hence for a suitable integer $m \geq 1,$ $\tau_{K'} = m \tau_K \geq \tau_K.$

	It is now sufficient to prove that if $(\pi_{K'})_*: H_1(K';\Q) \to H_1(L';\Q)$ is surjective, then $(\pi_{K})_*: H_1(K;\Q) \to H_1(L;\Q)$ also is. Note that as covering maps, $p$ and $p_{K'}$, induce injective homomorphisms at the level of fundamental groups. As $H_1(X) = \Ab \pi_1(X)$ for any path connected space $X$, Corollary \ref{lma: ab rank} implies that $p_*: H_1(L';\Q) \to H_1(L;\Q)$ and $(p_K)_*: H_1(K';\Q) \to H_1(K;\Q)$ are surjective. Hence if  $(\pi_{K'})_*: H_1(K';\Q) \to H_1(L';\Q)$ is surjective, then $(p \circ \pi_{K'})_* = p_* \circ (\pi_{K'})_*: H_1(K';\Q) \to H_1(L';\Q)$ is surjective. However, as $p \circ \pi_{K'} = \pi_K \circ p_{K'}$ the image of $(p \circ \pi_{K'})_* = (\pi_K \circ p_{K'})_* =(\pi_K)_* \circ (p_{K'})_*$ is contained in that of $(\pi_K)_*.$ Therefore $(\pi_K)_*$ is surjective.
\end{proof}

\subsection{Homotopy version of Theorem~\ref{thm:conjecture}} \label{subsec:proof_thm-conj-torus}
As announced in Remark~\ref{rem:conj-Virt_homotopy}, for some diffeomorphism types, a homotopy version of Theorem~\ref{thm:conjecture} holds. In order to state the result, we recall the following definition.

\begin{defn*}
	Let $H^D_2(M,L;\Z)$ be the image of $\pi_2(M,L)$ in $H_2(M,L;\Z)$ under the Hurewicz morphism, $L$ is called \emph{rational} if $\om(H^D_2(M,L;\Z)) \subset \R$ is a discrete subgroup~---~\emph{weakly exact} if it vanishes. We define the \emph{rationality constant} of $L$ to be the nonnegative generator of $\om(H^D_2(M,L;\Z))$. 
\end{defn*}

The argument in Proposition~\ref{prop: cover} implies that the homotopy version of Conjecture~\ref{conj: r Vit} is also invariant under coverings: if the rationality constant of all $K'\subseteq D^*_rL'$ with $(\pi_{K'})_*: H_1(K';\R) \to H_1(L';\R)$ nonsurjective· is uniformly bounded and $L'\to L$ is a covering, then the rationality constant of all $K\subseteq D^*_rL$ with $(\pi_{K})_*: H_1(K;\R) \to H_1(L;\R)$ nonsurjective is uniformly bounded by the same constant.

{More generally, we have the following result.}

\begin{thm}\label{thm:conjecture_torus}
	Suppose that $L$ is diffeomorphic to the product $L_0\times \T^m$ of a closed manifold $L_0$ with $H_1(L_0;\R)=0$ and a $m$-torus. Suppose that $K$ is a rational Lagrangian in $D^*_rL$ with rationality constant $\tau_K$ and such that $(\pi_K)_*$ is not surjective, then  \[\tau_K \leq C r\] for a constant $C>0$ depending only on a choice of an auxiliary metric on $L$. Moreover, the same is true for any closed $L'$ which is covered by $L$. 
\end{thm}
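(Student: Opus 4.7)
The plan is to establish the result first for $L = L_0 \times \T^m$ by a cover-and-displace argument, and then to extend to any $L'$ covered by such an $L$ via the scheme of Proposition~\ref{prop: cover}.

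\emph{Base case.} Let $K \subset D^*_r L$ be rational on disks with $(\pi_K)_* \colon H_1(K;\R) \to H_1(L;\R) = \R^m$ non-surjective (using $H_1(L_0;\R) = 0$). The image $V := \im(\pi_K)_* \otimes \R$ is a proper rational subspace of $\R^m$, so there exists a primitive $\ell \in \Z^m \cap V^\perp \setminus \{0\}$. Viewed as a surjective homomorphism $\ell \colon \T^m \to S^1$, the composition $\ell \circ \pi_K \colon K \to S^1$ vanishes on $H_1(\cdot;\R)$ by construction, hence on $\pi_1$ as well since $\pi_1(S^1) = \Z$ is torsion-free. Consequently $K$ lifts to a compact Lagrangian $\tilde K$ in the cover $\tilde X$ of $D^*_r L$ associated with $\ker(\ell_* \colon \pi_1(L) \to \Z)$; comparing the long exact sequences of the pairs $(D^*_r L, K)$ and $(\tilde X, \tilde K)$ yields $\pi_2(\tilde X, \tilde K) \cong \pi_2(D^*_r L, K)$ with symplectic areas preserved, so in particular $\tau_K = \tau_{\tilde K}$.

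\emph{Displacement estimate.} The oscillation $b - a$ of the lift $\widetilde{\ell \circ \pi_K} \colon K \to \R$ is bounded by $|\ell|_2 \cdot (\mathrm{diam}(L, g) + 2r)$: the 1-form $\ell$ on $L$ has pointwise $g^*$-norm $|\ell|_2$, and $\pi_K$ is 1-Lipschitz with respect to the Sasaki metric on $K$ (whose diameter is at most $\mathrm{diam}(L, g) + 2r$). Taking the Hamiltonian $H = v \cdot p$ with $v = (b - a + \varepsilon) \ell / |\ell|_2^2$, its time-one flow translates the $\R$-projection of $\tilde K$ by $\ell \cdot v = b - a + \varepsilon$, displacing $\tilde K$ from itself. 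By Cauchy--Schwarz the Hofer norm of (a suitable cutoff of) $H$ satisfies $\|H\| \leq 2r|v|_2 = 2r(b - a + \varepsilon)/|\ell|_2 \leq 2r(\mathrm{diam}(L, g) + 2r) + O(\varepsilon)$. Chekanov's inequality for rational-on-disks Lagrangians in the tame exact manifold $\tilde X$ yields $\tau_K = \tau_{\tilde K} \leq \|H\|$, and letting $\varepsilon \to 0$ gives $\tau_K \leq Cr$ with $C = 2(\mathrm{diam}(L, g) + 2r_0)$ once $r \leq r_0$.

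\emph{Covering case and main obstacle.} The extension to any closed $L'$ covered by $L = L_0 \times \T^m$ follows Proposition~\ref{prop: cover}: given $K \subset D^*_r L'$, its preimage in the pullback cover of $D^*_r L'$ by $L \to L'$ contains a connected component $\tilde K \subset D^*_r L$, and Lemma~\ref{lma: ab rank} applied to the finite cover ensures that non-surjectivity of $(\pi_K)_*$ on $H_1(\cdot;\Q)$ forces non-surjectivity of $(\pi_{\tilde K})_*$. Since $\omega(\pi_2(D^*_r L, \tilde K)) \subseteq \tau_K \Z$, the rationality constants satisfy $\tau_{\tilde K} = m \tau_K$ for some integer $m \geq 1$, and the base case applied to $\tilde K$ gives $\tau_K \leq \tau_{\tilde K} \leq Cr$. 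The main technical difficulty throughout is the uniform oscillation bound $(b - a) \leq |\ell|_2 \cdot (\mathrm{diam}(L, g) + 2r)$, which ensures that the Hofer norm of the displacing Hamiltonian depends only on $(L, g)$ and $r$, not on the particular $\ell$ (equivalently, not on $K$).
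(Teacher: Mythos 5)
Your reduction is set up sensibly: choosing a primitive $\ell\in\Z^m$ annihilating $\im(\pi_K)_*$, lifting $K$ to the associated $\Z$-cover, checking that the disk-period group (hence $\tau_K$) is preserved, and the final covering step modelled on Proposition~\ref{prop: cover} are all correct. The gap is exactly at the point you flag as the main technical difficulty: the oscillation bound $b-a\le |\ell|_2\,(\mathrm{diam}(L,g)+2r)$ is false. The function $\widetilde{\ell\circ\pi_K}$ is a primitive of the $1$-form $\pi_K^*\ell$ defined only on $K$, so its oscillation is controlled by $\|\pi_K^*\ell\|_{C^0}$ times the \emph{intrinsic} diameter of $K$ (equivalently, by how far $\tilde K$ spreads in the $\R$-direction of the noncompact cover), not by the extrinsic diameter of $D^*_rL$. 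That intrinsic spread is not bounded in terms of $(L,g,r)$: already for $L=S^1$, take $K$ to be the boundary of a thin tubular neighbourhood of an embedded arc spiralling $N$ times around the annulus $D^*_rS^1$. Then $(\pi_K)_*=0$, $K$ is rational with tiny $\tau_K$, yet its lift to $T^*\R$ has width about $N$, which is arbitrary. Your translation vector then has $|v|\approx N/|\ell|_2$, the Hofer norm of the displacing Hamiltonian is about $2rN$, and Chekanov only yields $\tau_K\le 2rN$ --- no uniform constant $C$.

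The paper's proof avoids this by going in the opposite direction: instead of decompactifying a circle factor into $\R$ (where translating costs energy proportional to the spread of $\tilde K$), it \emph{compactifies}, identifying $D^*_aS^1$ with the punctured disk $D^2(2a)\setminus\{0\}$ and filling in the puncture of one suitably chosen factor. The choice of which factor to fill (an index $i$ for which the coordinate projection $\pi_i$ is injective on $\im f_*$ --- dual in spirit to your choice of $\ell$) guarantees that the disk-period group of $K$ is unchanged. One then displaces $K$ inside $(\text{rest})\times\C$, using that $K$ projects into $D^2(2a)\subset\C$, a set of displacement energy $2a$ \emph{independently of how $K$ is spread out}; Chekanov then gives $\tau_K\le 2a$. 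To salvage your covering picture you would need a displacement of $\tilde K$ whose cost does not grow with its width, and inside $\{|p|\le r\}\subset T^*\R$ no such Hamiltonian exists in general, so some version of this compactification step seems unavoidable.
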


\begin{proof}[Proof of Theorem~\ref{thm:conjecture_torus}]
	Suppose $f$ is a rational Lagrangian embedding of $K$ into $$D^*_a(T^m\times Q)\cong (D^*_a S^1)^m\times D^*_aQ,$$ with rationality constant $\tau_K>0$. Observe that, as an open symplectic manifold, $D^*_a S^1 = (-a,a) \times S^1$ is symplectomorphic to the punctured disk $D^2(2a)' = D^2(2a) \setminus \{0\}$ of area $2a$. The symplectomorphism is given by $$\psi: D^*_a S^1 \to D^2(2a)',\quad (p,q) \mapsto (r,\theta)$$ where $\pi r^2 = a+p$ and $\theta = 2\pi q${~---~this identifies the zero section $0_{S^1} = \{0\} \times S^1$ with the circle $S(a) = \del D(a).$} In particular $(D^*_a S^1)^m\times D^*_aQ$ is symplectomorphic via $\psi^m \times \id$ to $V = (D^2(2a)')^m \times D^*_a Q.$ 
	
	Consider the induced map $f_*: H_1(K) \to H_1(V)$. If $R = \ker(f_*) = 0,$ then $K$ is {\HE} by Lemma~\ref{lemma:central}, and $\tau_K=0$; see also Remark \ref{rmk:homotopy_proof}. Suppose therefore that $R = \ker(f_*) \neq 0.$ Let \[V_i = (D^2(2a)')^{i-1} \times D^2(2a) \times (D^2(2a)')^{m-i} \times D^*_\eps Q\] for $1\leq i\leq m$. Let $g_i: V \to V_i$ denote the inclusion. We claim that $R= \ker(f_*)=\ker( (g_i \circ f)_* )$ for at least one index $i \in \{1,\ldots, m\}.$ Indeed, as the image $I \subset \Z^m$ of $f_*$ is a free abelian subgroup of rank at most $m-1,$ there exists at least one such $i$ with the projection $\pi_i: \Z^m \to Z_i=\{k \in \Z^m \;|\; k_i = 0 \}$ being injective on $I.$
	
	However $(g_i)_*: H_1(V) \to H_1(V_i)$ is naturally identified with $\pi_i.$ This means that the period groups \[\cl P_{V, K} = \brat{[\om], H^D_2(V, K;\Z)},\,\cl P_{V_i, K} = \brat{[\om], H^D_2(V_i, K;\Z)},\] coincide. In particular $K$ is rational in $V_i$ with the same rationality constant $\tau_K.$ For topological considerations, the same is true for $K$ inside $$\wh{V}_i = (D^2(2a)')^{i-1} \times \C \times (D^2(2a)')^{m-i} \times D^*_\eps Q.$$ Now, as $K\subset V_i$, the displacement energy of $K$ inside $\wh{V}_i$ is at most $2a.$ By \cite{Chekanov1998}, this yields $\tau_K \leq 2a.$
\end{proof}

Note that we directly get Proposition~\ref{cor:Vit numbers} for rational Lagrangians in $\R^n\times [-1,1]^n$ by setting $a=1$ and considering by contradiction the embedded copy of the rational Lagrangian $K$ inside $\R^n/\Z^n \times [-1,1]^n.$ Note, however, that in this case the map $f_*$ from the proof above vanishes identically, whence we may choose any index $i$ to run the argument. This implies that one may replace $[-1,1]^n$ by $X$ as in Proposition~\ref{cor:Vit numbers}.

\section{Theorem~\ref{thm:speculation-B} and the Lagrangian $C^{0}$ flux conjecture}
\label{sec:proof-theorems-From-H-exact-2-exact}
We start this section by proving Theorem~\ref{thm:speculation-B} (Section~\ref{subsec:proof-thm-from-H-exact-to-exact}). We then turn to the Lagrangian $C^{0}$ flux conjecture and examine the limit of sequences of {\HR} Lagrangians  (Section~\ref{subsec:proof_thm_limits}). Finally, we prove a result on the existence of some symplectic isotopy between nearby Lagrangians and use it to properly show Corollary~\ref{thm:C0-flux}, that is, the Lagrangian $C^{0}$ flux conjecture (Section~\ref{subsec:proof_prop_H-exactness_flux}).
We conclude the section with a proof of Proposition~\ref{prop:ham-lag} (Section \ref{subsec:proof_prop-ham-lag}).

\subsection{Proof of Theorem~\ref{thm:speculation-B}}
\label{subsec:proof-thm-from-H-exact-to-exact}
In view of Theorem~\ref{thm:nbhd-of-H-exactness}, we only need to upgrade our neighbourhood of {\HE}ness to a neighbourhood of actual exactness. In fact, we will prove the following stronger statement.

\begin{thm}
	\label{thm:from-H-exact-to-exact}
	Let $L$ be a {\HR} Lagrangian in $(M,\omega)$ and let $L'\in\LHam(L)$ be a Lagrangian included in a Weinstein neighbourhood $\m W_{r}(L)$ of size $r>0$ such that $L'$ is $H$-exact in $\m W_{r}(L)$. Then, for a maybe smaller $r$, $L'$ is exact in $\m W_{r}(L)$.

	Moreover, if the inclusion of $L$ into $M$ induces the 0-map $H_{1}(L;\bb R) \to H_{1}(M;\bb R)$, then the same result holds with $\LHam(L)$ replaced by $\m L(\tau_L)$, the space {\HR} Lagrangians with {\HR ity} constant $\tau=\tau_L$.\\
\end{thm}
Note that this also gives a version of Theorem~\ref{thm:speculation-B} for $\LSymp(L)$ whenever $H_{1}(L;\bb R) \to H_{1}(M;\bb R)$ is zero.

\medskip
To prove the statement, we first claim that under any of these assumptions, the rationality constant of $L'$ in $\m W_{r}(L)$, seen as a subset of $T^{*}L$, is a fraction of that of $L'$ in $M$.

\begin{lem} \label{lem:values_Liouville-form}
	Let $L$ and $L'$ be as above, and denote by $\Psi : D_{r}^{*}L \to \m W_{r}(L)$ a Weinstein neighbourhood of $L$.
	There exists an integer $k=k(M,L)$ such that $\lambda_0(H_1(\Psi^{-1}(L')))\subseteq \frac{\tau}{k}\Z$.
\end{lem}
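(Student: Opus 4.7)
The plan is to express the period $\int_\gamma \lambda_0$ of a loop $\gamma\subseteq\Psi^{-1}(L')$ as the symplectic area of a relative $2$-cycle in $(M,L)$ or $(M,L')$, and then to invoke $H$-rationality of the appropriate Lagrangian. The first step will be a standard fiber-retraction in $D^*_r L$: since $\lambda_0$ vanishes on the zero section, Stokes' theorem applied to the cylinder $U:S^1\times [0,1]\to D^*_rL$, $U(s,t)=(1-t)\gamma(s)$, yields $\int_\gamma\lambda_0=\omega_0(U)$. Transporting under $\Psi$ would then produce a $2$-chain $V=\Psi(U)\subseteq\m W_r(L)$ whose $\omega$-area is $\int_\gamma\lambda_0$ and whose boundary is $\Psi(\gamma)-\bar\gamma$, where $\bar\gamma=\Psi(\pi(\gamma))\subseteq L$.

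To handle case (a), where $L'=\phi_1(L)$ for some Hamiltonian isotopy $\{\phi_t\}$, I would set $\gamma_0=\phi_1^{-1}(\Psi(\gamma))\subseteq L$ and consider the cylinder $C(s,t)=\phi_t(\gamma_0(s))$. The standard computation for Hamiltonian flows gives $\omega(C)=0$ and $\partial C=\Psi(\gamma)-\gamma_0$, so $V-C$ would be a $2$-chain in $M$ with boundary $\gamma_0-\bar\gamma$ entirely in $L$, hence representing a class in $H_2(M,L;\Z)$. The $H$-rationality of $L$ then forces
\[\int_\gamma\lambda_0 \;=\; \omega(V-C) \;\in\; \omega(H_2(M,L;\Z)) \;=\; \tau\Z,\]
which gives the conclusion with $k=1$.

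For case (b), I would exploit that the composition $H_1(L';\Z)\to H_1(\m W_r(L);\Z)=H_1(L;\Z)\to H_1(M;\Z)$ has image contained in that of $H_1(L;\Z)\to H_1(M;\Z)$, which is finitely generated and torsion (since the corresponding $\R$-map vanishes by hypothesis). Letting $N=N(M,L)$ denote its exponent, we would have $N[\Psi(\gamma)]=0$ in $H_1(M;\Z)$, and could pick a $2$-chain $B\subseteq M$ with $\partial B=N\Psi(\gamma)$. Then $NV-B$ has boundary $-N\bar\gamma\subseteq L$, so it represents a class in $H_2(M,L;\Z)$ and its area $N\int_\gamma\lambda_0-\omega(B)$ lies in $\tau\Z$. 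Simultaneously, $B$ itself would be a relative $2$-cycle in $(M,L')$, so $\omega(B)\in\tau\Z$ by the $H$-rationality of $L'$ (recall $L'\in\m L(\tau)$). Combining these gives $N\int_\gamma\lambda_0\in\tau\Z$, and we obtain the lemma with $k=N$. Since $\tau\Z\subseteq\frac{\tau}{N}\Z$, this choice of $k$ works uniformly in both cases.

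The main obstacle will be verifying that $N$ depends only on the pair $(M,L)$ and not on $L'$ in case (b); this is ensured by the retraction $\m W_r(L)\simeq L$, which forces the image of $H_1(L';\Z)$ to sit inside the fixed torsion group $\im(H_1(L;\Z)\to H_1(M;\Z))$. A subtler point to get right is tracking boundaries and signs when assembling the various $2$-chains, but once the boundary strata have been correctly identified the rest is formal.
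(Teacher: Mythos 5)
Your proof is correct and follows essentially the same route as the paper's: both express $\int_\gamma\lambda_0$ as the area of a cylinder in $\m W_r(L)$ joining a loop of $L'$ to a loop of $L$, then close it into a relative $2$-cycle using the trace of the Hamiltonian isotopy (whose flux vanishes) in case (a), or a torsion/multiplication argument together with the $H$-rationality of both $L$ and $L'$ in case (b). The only cosmetic differences are that you build the cylinder by explicit fibrewise retraction rather than via the homotopy equivalence of Lemma~\ref{lemma:central}, and that in case (b) you cap off along $L'$ with an absolute chain and take $k$ to be the exponent of $\im\big(H_1(L;\Z)\to H_1(M;\Z)\big)$ where the paper uses the order of $\Coker\big(H_2(M,L)\to H_1(L)\big)$ --- two finite groups that are isomorphic by the long exact sequence of the pair.
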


This lemma, whose proof we postpone to \S~\ref{subsubsec:proof-lemma} below, directly shows that, when $\tau=0$, {\HE}ness yields exactness.

\medskip
When $\tau > 0$, we conclude by using the following additional estimate.

\begin{prop}\label{prop:estimate_taut-form}
	Let $L\hookrightarrow (D^*_rL, d\lambda_{0})$ be a Lagrangian embedding whose image $L'$ is {\HE}. We have that
	\begin{align*}
		\label{eq:estimate_taut-form}
		\forall \beta'\in H_1(L'), \qquad  |\lambda_0(\beta')|\leq r\ell^\mathrm{min}_g(\pi_*\beta') 
	\end{align*}
	where $\ell^\mathrm{min}_g(\beta)$ denotes the length of the shortest geodesic loop for $g$ in $L$ representing the class $\beta$.
\end{prop}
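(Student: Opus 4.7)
The plan is to combine a pointwise $C^0$-bound on $\lambda_0$ along $L'$ with the homotopy-theoretic content of $H$-exactness. Since $L'\subset D^*_r L$, any tangent vector $v\in T_{(q,p)}L'$ satisfies $\lambda_0(v)=p(d\pi(v))$, whence $|\lambda_0(v)|\leq |p|_{g^*}\cdot|d\pi(v)|_g\leq r\,|d\pi(v)|_g$. Integrating along a smooth loop $\gamma':S^1\to L'$ yields the preliminary bound
\begin{equation*}
\left|\int_{\gamma'}\lambda_0\right|\leq r\,\ell_g(\pi\circ\gamma').
\end{equation*}
Since $\lambda_0|_{L'}$ is closed (by the Lagrangian condition), this estimate depends only on $[\gamma']\in H_1(L';\Z)$. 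The task is therefore to exhibit, for each $\beta'$, a representative cycle whose projection is arbitrarily close to being length-minimizing in the class $c:=(\pi|_{L'})_*\beta'\in H_1(L;\Z)$.

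At this point I would invoke the central lemma (Lemma~\ref{lemma:central}): $H$-exactness forces $\pi|_{L'}:L'\to L$ to be a homotopy equivalence, so $(\pi|_{L'})_*$ is an isomorphism and $\pi|_{L'}$ has degree $\pm 1$. Fix $\epsilon>0$ and choose a smooth immersion $\eta:S^1\to L$ with $[\eta]=c$ and $\ell_g(\eta)\leq\ell^{\min}_g(c)+\epsilon$, obtained as a small perturbation of a length-minimizing geodesic representative of $c$. By Sard's theorem, a further perturbation ensures that $\eta$ avoids the critical values of $\pi|_{L'}$, so that $\eta$ is automatically transverse to $\pi|_{L'}$ as smooth maps.

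The fibered product $C:=L'\times_L S^1$ is then a compact $1$-manifold, and on each connected component $C_i$ the projection to $S^1$ is a covering of some degree $d_i\in\Z$. A standard degree count gives $\sum_i d_i=\deg(\pi|_{L'})=\pm 1$, so some $d_{i_0}\neq 0$. Writing $\gamma_{i_0}:C_{i_0}\to L'$ for the induced loop, one has $(\pi|_{L'})_*[\gamma_{i_0}]=d_{i_0}c=d_{i_0}(\pi|_{L'})_*\beta'$; by injectivity of $(\pi|_{L'})_*$, this gives $[\gamma_{i_0}]=d_{i_0}\beta'$ in $H_1(L';\Z)$. The loop $\pi\circ\gamma_{i_0}$ covers $\eta$ a total of $|d_{i_0}|$ times, so $\ell_g(\pi\circ\gamma_{i_0})=|d_{i_0}|\,\ell_g(\eta)$. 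Combining with the preliminary bound and the identity $\lambda_0([\gamma_{i_0}])=d_{i_0}\,\lambda_0(\beta')$,
\begin{equation*}
|\lambda_0(\beta')|=\frac{1}{|d_{i_0}|}\left|\int_{\gamma_{i_0}}\lambda_0\right|\leq r\,\ell_g(\eta)\leq r\,(\ell^{\min}_g(c)+\epsilon),
\end{equation*}
and letting $\epsilon\to 0$ concludes.

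The main obstacle I foresee is the transversality step: approximating a length-minimizing representative of $c$ by a smooth immersion avoiding the critical values of $\pi|_{L'}$ is technically delicate when the minimizer is multiply covered or self-intersecting, but this is handled by a routine Sard/general-position argument. In the non-orientable setting the degrees must be read modulo $2$; since a homotopy equivalence still has $\Z/2$-degree $1$, some $d_{i_0}$ is necessarily nonzero and the same argument applies verbatim, possibly after passing to $\R$-coefficients via the orientation double cover.
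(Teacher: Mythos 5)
Your preliminary pointwise bound $|\lambda_0(v)|\leq r\,|d\pi(v)|_g$ and the resulting estimate $\left|\int_{\gamma'}\lambda_0\right|\leq r\,\ell_g(\pi\circ\gamma')$ are correct, but the argument breaks at the step where you claim $\ell_g(\pi\circ\gamma_{i_0})=|d_{i_0}|\,\ell_g(\eta)$. That identity requires the projection $C_{i_0}\to S^1$ to be a local diffeomorphism, i.e.\ it requires $\eta$ to miss the caustic (the critical values of $\pi|_{L'}$). For a generic Lagrangian $L'$ the caustic is a codimension-one stratified subset of $L$, so a loop in a prescribed nontrivial homotopy class cannot in general avoid it~---~Sard only gives measure zero, not codimension at least two~---~and transversality of $\eta$ to $\pi|_{L'}$ only makes $C$ a $1$-manifold; it does not prevent the map $C_{i_0}\to S^1$ from folding. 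When it folds, $\pi\circ\gamma_{i_0}$ backtracks along $\eta$ and one only gets $\ell_g(\pi\circ\gamma_{i_0})\geq |d_{i_0}|\,\ell_g(\eta)$, which is the wrong direction: your final chain of inequalities then bounds $|\lambda_0(\beta')|$ by $r$ times the total variation of the projected loop, not by $r\,\ell_g^{\mathrm{min}}(\pi_*\beta')$.

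This is not a repairable technicality: the proposition is a hard rigidity statement (for $L=\T^n$ it is essentially Eliashberg--Sikorav shape rigidity), and your argument uses nothing that distinguishes embedded from immersed Lagrangians. For immersed Lagrangians the conclusion is false~---~already in $D^*_rS^1$ an immersed circle of winding number one with $p\approx r\,\mathrm{sign}(q')$ and large total variation of $q$ has $\int p\,dq$ as large as one likes~---~so any correct proof must input some form of symplectic rigidity. The paper does this by showing, via Lemma~\ref{lemma:central} together with the Paternain--Polterovich--Siburg argument and the existence of graph selectors for exact Lagrangians (Amorim--Oh--dos Santos, Guillermou), that the class $[\lambda_0|_{L'}]$, read in $H^1(L;\R)$ through the homotopy equivalence $\pi|_{L'}$, is represented by a closed $1$-form on $L$ of pointwise norm less than $r$; the desired estimate then follows by integrating that form over a near-minimizing geodesic loop, which is essentially your soft step run on $L$ rather than on $L'$.
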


Indeed, we choose a basis $\{\beta'_{1}, \ldots \beta'_{m}\}$ of $H_{1}(L')$ and we fix $r' < \frac{\tau}{k\ell}$ where
\begin{align*}
	\ell = \max \{ \ell^\mathrm{min}_g(\pi_*\beta'_{i}) \,|\, 1 \leq i \leq m \} \,.
\end{align*}
The proposition above gives that, for all $i$, $|\lambda_0(\beta'_i)| \leq r' \ell < \frac {\tau}{k}$.
Because of Lemma \ref{lem:values_Liouville-form}, we then get that $\lambda_{0}$ vanishes on $H_{1}(L')$, which proves the exactness of $L'$.

\medskip
\emph{It only remains to prove the lemma and proposition above to conclude the proof of Theorem \ref{thm:from-H-exact-to-exact}.}

\subsubsection{Proof of Proposition~\ref{prop:estimate_taut-form}}
\label{subsubsec:proof-prop}

We start with the proposition. First, let us remark that when $L=\T^n$, the estimate follows directly from Eliashberg's result on the shape of subsets of $T^*\T^n$~\cite{Eliashberg1991}. With the additional hypothesis that $L$ is also contained in a Weinstein neighbourhood of $L'$, this is a result of Membrez and Opshtein~\cite{MembrezOpshtein2021}. However, as they themselves point out, there should be a proof of this result without their additional constraint using the theory of graph selectors~---~they even sketch out a proof, which we mostly follow here.

\begin{proof}[Proof of Proposition \ref{prop:estimate_taut-form}]
	In Theorem~6.1 of~\cite{PaternainPolterovichSiburg2003}, Paternain, Polterovich, and Siburg show that, for every Lagrangian submanifold $L'\subseteq T^*L$ Lagrangian isotopic to the zero-section and every fiberwise-convex neighbourhood $W$ of $L'$, there is a closed 1-form $\sigma$ of $L$ such that $\mathrm{graph}(\sigma) \subseteq W$ and $[\sigma]=[\lambda_0|_{L'}]$. However, inspecting the proof of that statement, we see that all that is truly required is the existence of a symplectic isotopy preserving fibres sending $L'$ to an exact Lagrangian submanifold admitting a graph selector~---~we refer to that paper for the definition of a graph selector. On the one hand, we have shown in Lemma~\ref{lemma:central} that {\HE} Lagrangians in $T^*L$ indeed have associated symplectic isotopies preserving fibres which send them to exact ones. On the other hand, it is now known that every exact Lagrangian submanifold of $T^*L$ admits a graph selector. This was proven using Floer theory by Amorim, Oh, and Dos Santos~\cite{AmorimOhSantos2018} and using microlocal sheaves by Guillermou~\cite{Guillermou2023}. Therefore, the result applies as is in our case.
	
	But it follows from this that
	\begin{align*}
		\big\{[\iota^*\lambda_0]\ \big|\ \iota:L\hookrightarrow D^*_r L\ \text{is {\HE}}\big\}=\big\{[\sigma]\in H^1(L;\R)\ \big|\ |\sigma|<r\big\}.
	\end{align*}
	In particular, for every {\HE} Lagrangian embedding $\iota:L\hookrightarrow D^*_r L$ and every loop $\gamma:S^1\to L$, we have that
	\begin{align*}
		|\lambda_0(\iota\circ\gamma)|<r\ell_g(\gamma),
	\end{align*}
	where $\ell_g$ denotes the length in the metric $g$. By taking the infimum over all loops representing a class $\beta=\pi_*\beta'$, we get the desired inequality.
\end{proof}

\subsubsection{Proof of Lemma~\ref{lem:values_Liouville-form}}
\label{subsubsec:proof-lemma}

Recall that $L$ is a {\HR} Lagrangian with rationality constant $\tau\geq 0$, that $\Psi : D_{r}^{*}L \to \m W_{r}(L)$ is a Weinstein neighbourhood of $L$ in $M$ of size $r>0$, and that $L' \in \m L(\tau)$ is a Lagrangian entirely contained and {\HE} in $\m W_{r}(L)$.
The lemma states that, under one of the following conditions,
\begin{enumerate}%[label=(\alph*)]
	\item $L' \in \LHam(L)$, or
	\item the map $H_{1}(i) \otimes \bb R$ induced by the inclusion $i : L' \hookrightarrow M$ vanishes
\end{enumerate}
there exists an integer $k=k(M,L)$ such that $\lambda_0(H_1(\Psi^{-1}(L')))\subseteq \frac{\tau}{k}\Z$.

For convenience, we denote by $\overline X$ the object in $T^{*}L$ corresponding to $X$ via $\Psi^{-1}$, e.g. $\Psi^{-1}(L) = \overline L$.

\begin{proof}[Proof of Lemma  \ref{lem:values_Liouville-form}]
	Fix a representative $\overline{\beta}: S^{1} \to \overline L$ of a class in $H_1(\overline L)$. Since $\overline{L'}$ is {\HE} in $D_r^*L$, the projection $\overline{L'} \hookrightarrow T^*L\to \overline L$ is a homotopy equivalence by Lemma~\ref{lemma:central}. Therefore, there exist a loop $\overline{\beta'}$ in $\overline{L'}$ and a cylinder $\overline{C}$ in $D_r^*L$ such that $\pi_{*}(\overline{\beta'})=\overline{\beta}$ and $\del \overline{C}=\overline{\beta'}\sqcup (-\overline{\beta})$. By Stokes Theorem and exactness of the 0-section $\overline L$ in $T^*L$, we thus have that
	\begin{align*}
		\omega(C) = d\lambda_{0}(\overline{C})=\lambda_0(\overline{\beta'})-\lambda_0(\overline{\beta})=\lambda_0(\overline{\beta'}) \,.
	\end{align*}
	
	In case (a), take a Hamiltonian isotopy $\{\phi_t\}_{t\in [0,1]}$ starting at identity and such that $\phi_1(L)=L'$. Then $C'(s,t):=\phi_t^{-1}(\beta'(s))$ defines a cylinder in $M$ and $C'':=C\cup_{\beta'} C'$ represents a class in $H_2(M,L)$. In particular, $\omega(C)+\omega(C')=\omega(C'')\in\tau\Z$. But note that, since $\{\phi_t^{-1}\}$ is Hamiltonian,
	\begin{align*}
		\omega(C')=\mathrm{Flux}(\{\phi_t^{-1}\})(\beta')=0 \,.
	\end{align*}
	Therefore, $\omega(C)=\lambda_0(\overline{\beta'})\in\tau\Z$, and we can take $k=1$.
	
	In case (b),
	note that $H_1(L;\R)\to H_1(M;\R)$ being zero is equivalent to the image of $H_1(L)\to H_1(M)$ being finite, since $H_1(\,\cdot\,;\R)=H_1(\,\cdot\,)\otimes\R$. By the long exact sequence of the pair $(M,L)$, this is in turn equivalent to $H_2(M,L)\to H_1(L)$ having finite cokernel, whose size we denote by $k$. 
	Then, $k\beta$ bounds some $u\in H_2(M,L)$, and we have that
	\begin{align*}
		k\lambda_0(\overline{\beta'})=k\omega(C)=\omega(u\# kC)-\omega(u)\in \tau\Z,
	\end{align*}
	because $u\# kC\in H_2(M,L')$, and $L$ and $L'$ belong to $\m L(\tau)$. Therefore, $\lambda_0|_{L'}$ must take values in $\frac{\tau}{k}\Z$.
\end{proof}

\subsubsection{Proof of Theorem~\ref{thm: BO}}
We now have all the tools required to prove Buhovsky and Ophstein's conjecture, i.e.\ Theorem~\ref{thm: BO}.

\begin{proof}
	As in the proof of \cite[Proposition 5.2]{BO2016}, for all sufficiently large $k,$ the map $\pi \circ i_k: L \to L'$ is $C^0$-close to and hence homotopic to $i: L \to T^*L',$ where $\pi: T^*L' \to L'$ is the natural projection. By Lemma \ref{lemma:central} this implies that $L_k = i_k(L) \subset T^*L'$ is $H$-exact. Therefore by Proposition \ref{prop:estimate_taut-form} we get that \[\langle [\lambda|_{L_k}], \beta'\rangle \leq C r_k || \pi_*\beta' ||,\] where $||\cdot||$ is any norm on $H^1(L';\R),$ for a suitable constant $C >0$ depending only on $L'$ and the choice of the norm and $r_k>0$ is such that $L_k \subset D^*_{r_k} L'.$ Now write $\beta' = (i_k)_*\beta$ for an arbitrary $\beta \in H_1(L;\R).$ Then \[\langle [i_k^*\lambda], \beta \rangle = \langle i_k^*[\lambda|_{L_k}], \beta \rangle = \langle [\lambda|_{L_k}],  (i_k)_*\beta \rangle \leq C r_k || \pi_* (i_k)_*\beta || = C r_k || i_* \beta ||,\] as $\pi \circ i_k$ is homotopic to $i.$ Using the fact that $i: L \to 0_{L'} \cong L'$ is a homotopy equivalence, we get that $|| i_*( - ) ||$ is a norm on $H_1(L;\R).$ This implies that the dual norm of $[i_k^*\lambda]$ satisfies \[ || [i_k^*\lambda]|| \leq C r_k\] and hence converges to $0$ as the $r_k$ can be chosen to do so for $i_k$ $C^0$-converges to $i$ whose image is $0_{L'}.$ 
\end{proof}

\subsection{Proof of Theorem~\ref{thm:limits}}
\label{subsec:proof_thm_limits}
We now turn our attention to limits of sequences of {\HR} Lagrangians. More precisely, we want to prove the following result. This will be an essential ingredient when proving Theorem~\ref{thm:C0-flux}.

\begin{thm} \label{thm:limits}
	Let $\{L_i\}$ be a sequence of {\HR} Lagrangians of a tame symplectic manifold $M$ such that
	\begin{enumerate}%[label=(\roman*)]
		\item $\{L_i\}$ Hausdorff-converges to a $n$-dimensional smooth submanifold $L$;
		\item $\inf\tau_i>0$, where $\tau_i$ denotes the {\HR ity} constant of $L_i$.
	\end{enumerate}
	Then, $L$ is itself Lagrangian.
	
	Moreover, if $L_i$ is {\HE} in a Weinstein neighbourhood $\m W(L)$ for $i$ large, then $\lim\tau_i$ exists and is the {\HR ity} constant of $L$. This is in particular the case if the $L_i$'s respect the hypotheses of Theorem~\ref{thm:speculation-B}.
\end{thm}

{By \emph{tame}, we mean that $M$ admits an almost complex structure $J$ making $g_J:=\omega(\cdot, J\cdot)$ into a complete Riemannian metric whose sectional curvature is bounded and whose injectivity radius is bounded away from zero.

	The first part of the theorem is a fairly direct application of Laudenbach and Sikorav's result on the displaceability of non-Lagrangian submanifolds~\cite{LaudenbachSikorav1994}~---~we mostly write it here for the reader's convenience. Furthermore, the second part of the theorem is very reminiscent of Theorem~1 of~\cite{MembrezOpshtein2021}~---~the proof is in fact very inspired by what appears in that paper. The strength of our result is that it applies to sequences $\{L_i=\phi_i(L)\}$ where the sequence of Hamiltonian diffeomorphisms $\{\phi_i\}$ need not $C^0$-converge.
}

\begin{proof}[Proof of Theorem~\ref{thm:limits}]
	We start with the first part of the statement: if $L_i$ converges to $L$ with $L$ smooth and $n$-dimensional and $L_i\in\m L(\tau_i)$ with $\inf\tau_i>0$, then $L$ is Lagrangian. This follows pretty directly from Laudenbach and Sikorav's result on displacement of non-Lagrangians~\cite{LaudenbachSikorav1994}. \par
	
	Indeed, suppose $L$ is not Lagrangian. Then, $L\times S^1\subseteq M\times T^*S^1$ is also not Lagrangian and its normal bundle admits a nowhere vanishing section. Therefore, it follows from~\cite{LaudenbachSikorav1994} that, for every $\epsilon>0$, there is a Hamiltonian diffeomorphism $\phi$ of $M\times T^* S^1$ such that $\phi(L\times S^1)\cap L\times S^1=\emptyset$ and with Hofer norm $||\phi||_H<\epsilon$. But then, there is a neighbourhood $U$ of $L\times S^1$ such that $\phi(U)\cap U=\emptyset$. In particular, for $i$ large enough, $\phi(L_i\times S^1)\cap (L_i\times S^1)=\emptyset$. Therefore, if $e(L_i\times S^1)$ is the displacement energy of $L_i\times S^1$, we have that
	\begin{align*}
		\epsilon\geq \limsup e(L_i\times S^1)\geq \limsup \tau_i\geq \inf\tau_i>0,
	\end{align*}
	where the second inequality follows from Chekanov's estimate on displacement energy~\cite{Chekanov1998}. We get a contradiction by taking the limit $\epsilon\to 0$. \par
	
	The second part~---~that is, for when we know that the $L_i$'s are {\HE} in $\m W(L)$ for $i$ large~---~is proved in a very similar way as Theorem~\ref{thm:from-H-exact-to-exact}. Indeed, we know that the projection $L_i\to L$ is a homotopy equivalence by Lemma~\ref{lemma:central}. In particular, any homology class $A\in H_2(M,L;\Z)$ can be obtained by gluing a class $A_i\in H_2(M,L_i;\Z)$ to a (union of) cylinder $C_i$ with $\del C=\del A_i-\del A$ and $\del A_i=(\pi|_{L_i})^{-1}(\del A)$. Here, we identify $L_i$ with its preimage in $T^*L$ under a Weinstein neighbourhood of $L$. If $L_i\subseteq D^*_{r_i}L$, then Proposition~\ref{prop:estimate_taut-form} gives
	\begin{align*}
		\lim|\lambda_0(\del A_i)|\leq \lim r_i\ell^{\min}_g(\del A)=0,
	\end{align*}
	where we have made use of the fact that we may take $\lim r_i=0$ since $\{L_i\}$ Hausdorff-converges to $L$. Therefore,
	\begin{align*}
		|\omega(A)|=\liminf |\omega(A_i)+\lambda_0(\del A_i)|=\liminf|\omega(A_i)|.
	\end{align*}
	But by rationality, $|\omega(A_i)|=n_i\tau_i$ for some $n_i\in\Z_{\geq 0}$. Since $\inf\tau_i>0$, $\{n_i\}$ must be bounded. Therefore, by passing to a subsequence if necessary, we may suppose that $n_i\equiv n$ for all $i$. Thus, if $\tau:=\liminf\tau_i$, then $\omega(A)\in\tau\Z$. In particular, $L$ is {\HR} and its rationality constant $\tau_L$ is a multiple of $\tau$. \par
	
	We now prove that this multiple must be in fact 1. Pick a base $\{A^1,\dots, A^k\}$ of the free part of $H_2(M,L;\Z)$ such that $\omega(A^j)=\tau_L$ for all $j$, and construct $A^j_i$ and $C^j_i$ as above. By the same logic as above, we may suppose that $|\omega(A_i^j)|=n^j\tau_i$ for all $i$. Therefore, we get that
	\begin{align*}
		n^j\tau_i-r_i\ell\leq\tau_L\leq n^j\tau_i+r_i\ell,
	\end{align*}
	where $\ell:=\max_j \ell_g^{\min}(\del A^j)$. Since $\inf\tau_i>0$, for $i$ large enough, $r_i\ell<\tau$, and thus we must have $\tau_L=n^j\tau$ for all $j$, i.e.\ $n\equiv n^j$. But since the projection $L_i\to L$ is a homotopy equivalence, $\{A^1_i,\dots, A^k_i\}$ is a basis of the free part of $H_2(M,L_i;\Z)$. This is only possible if $n=1$ as $|\omega(A_i^j)|=n\tau_i$ and $\omega(H_2(M,L_i;\Z))=\tau_i\Z$. This thus implies that $\tau_L=\tau$. \par
	
	Finally, to conclude that $\lim \tau_i$ must exist, note that the contrary would imply the existence of two subsequences of $\{L_i\}$~---~thus still converging to $L$~---~such that their corresponding subsequences of $\{\tau_i\}$ converge to different values. But then, both these values would need to be equal to $\tau_L$, which is not possible.
\end{proof}

\subsection{Proof of Theorem~\ref{thm:C0-flux}}
\label{subsec:proof_prop_H-exactness_flux}
We first prove one last technical proposition before moving to finally prove the Lagrangian $C^0$ flux conjecture, i.e.\ Theorem~\ref{thm:C0-flux}.

\begin{prop} \label{prop:H-exactness_flux_plus}
	Let $L$ be a {\HR} Lagrangian submanifold of $M$ with {\HR i-} ty constant $\tau$. There is some $r_0>0$ and some $C>0$ with the following property. Assume that $L'\in\m L(\tau)$ is a Lagrangian included in a Weinstein neighbourhood $\m W_{r}(L)$ of size $r\in (0,r_0]$ such that $L'$ is $H$-exact in $\m W_{r}(L)$. Then, there is a symplectic isotopy $\{\psi_t\}_{t\in [0,1]}$ of $M$ with $|\mathrm{Flux}(\{\psi_t(L')\})|\leq Cr$ such that $\psi_1(L')$ is exact in $\m W_{r}(L)$. 
\end{prop}

By $\mathrm{Flux}(\{L_t\})\in H^1(L;\R)$, we mean the Lagrangian flux of the Lagrangian isotopy $\{L_t\}$; it is defined as follow. Take $F:[0,1]\times L\to M$ such that $F(L,t)=L_t$. Then, $F^*\omega=dt\wedge \alpha_t$ for some time-dependent 1-form $\alpha_t$ on $L$, and we set $\mathrm{Flux}(\{L_t\})(\gamma):=\int_0^1 \alpha_t(\gamma)dt$ for any loop $\gamma:S^1\to L$. This is precisely the area swept by $\gamma$ through the isotopy~---~in particular, it is independent of the parametrization $F$ of $\{L_t\}$. \par

\begin{pr*}
	Denote by $V$ the image of the boundary map $H_2(M,L';\R)\to H_1(L';\R)$. Pick a complement $W$ of $V$ in $H_1(L';\R)$, and take loops $\{\gamma_1,\dots,\gamma_k\}$ which induce a basis of $W$. Similarly to Section~\ref{subsec:proof_thm_limits} above, the proof of Theorem~\ref{thm:from-H-exact-to-exact} still implies that $\lambda_0|_{L'}(V)= 0$ for $r$ small enough. Therefore, we can take $r_0$ to ensure this is true for all $r\leq r_0$. \par
	
	We divide our isotopy in two parts. First, we consider the Lagrangian isotopy $F:t\mapsto [(\alpha-1)t+1]\cdot L'$ induced by the multiplication along the fibers of $T^*L$, where $\alpha\in [0,1]$. A direct computation gives that $F^*\omega=(\alpha-1)\lambda_0|_{L'}\wedge dt$, so that the flux associated to the isotopy is $(\alpha-1)[\lambda_0|_{L'}]$. Note that, by the above paragraph, this cohomology class is in the annihilator $V^0$ of $V$, which we can identify with the dual $W^*$ of $W$ in $H^1(L';\R)=\Hom(H_1(L';\R),\R)$. \par
	
	Second, take a closed 1-form $\sigma$ on $L$ such that $\sigma(V)= 0$ and $\sigma(\pi\circ\gamma_i)=\lambda_0(\gamma_i)$ for all $i$. It exists, since the projection $L'\to L$ is a homotopy equivalence by Lemma~\ref{lemma:central}. Consider the symplectic isotopy $\{\psi_t'\}$ of $T^*L$ generated by $X$ such that $\iota_X\omega_0=-\pi^*\sigma$, where $\pi:T^*L\to L$ is the canonical projection. It is easy to check that
	\begin{enumerate}%[label=\textit{(\roman*)}]
		\item $\psi_1'(L')$ is exact in $T^*L$,
		\item if $L'\subseteq D^*_rL$, then $\psi_t'(L')\subseteq D^*_{r+|\sigma|}L$ for all $t\in [0,1]$,
		\item $\mathrm{Flux}(\{\psi_t'(L')\})=(\iota')^*\mathrm{Flux}(\{\psi_t'\})=-(\iota')^*\pi^*[\sigma]=-[\lambda_0|_{L'}]$.
	\end{enumerate}
	We have made here the slight abuse of notation of identifying $L'$ with its preimage in $T^*L$ via the Weinstein neighbourhood. Again, \textit{(iii)} implies that the flux of the isotopy is in $W^*$.
	
	The Lagrangian isotopy $\{L'_t\}$ from $L'$ to an exact Lagrangian $L''$ that we are interested in is the (smoothing of the) concatenation of Lagrangian isotopies as above. More precisely, start with $L'\subseteq D^*_r L$ and $\sigma$ as above. Then, the first half of the isotopy is given by the scaling from $L'$ to $\alpha L'$ for $\alpha=\frac{r}{r+|\sigma|}$. Note that then, $\alpha\sigma$ is a closed 1-form on $L$ having the same properties as above for the Lagrangian $\alpha L'$. We thus get from it a symplectic isotopy $\{\psi_t'\}$ with properties \textit{(i)}--\textit{(iii)} for $\alpha L'$. In particular, $\psi_t'(\alpha L')\subseteq D^*_{\alpha r+|\alpha\sigma|}L=D^*_r L$ and $\mathrm{Flux}(\{\psi_t'(L')\})= -\alpha[\lambda_0|_{L'}]$. Therefore,
	\begin{align*}
		\mathrm{Flux}(\{L'_t\})=(\alpha-1)[\lambda_0|_{L'}]-\alpha[\lambda_0|_{L'}]=-[\lambda_0|_{L'}]\in W^*,
	\end{align*}
	where we have made use of the additivity of the flux under concatenation. Furthermore, Proposition~\ref{prop:estimate_taut-form} then implies that $|\mathrm{Flux}(\{L_t'\})|\leq r\max_i \ell_g^\mathrm{min}(\gamma_i)$, and it suffices to take $C:=\max_i \ell_g^\mathrm{min}(\gamma_i)$. \par
	
	We now show how $\{L_t'\}$ comes from a symplectic isotopy of $M$~---~this is essentially Lemma~6.6 of~\cite{Solomon2013}. Note that in the splitting $H^1(L';\R)=V^*\oplus W^*$, $W^*$ corresponds to the image of the restriction homomorphism $\Psi^*:H^1(M;\R)\to H^1(\m W_r(L);\R)$ under the restriction isomorphism $H^1(\m W_r(L);\R)\to H^1(L';\R)$. Here, we make use of the fact that $L'$ is isotopic to an exact Lagrangian of $T^*L$, so that the inclusion $L'\to \m W(L)$ induces an isomorphism on cohomology. In particular, since $[\lambda_0|_{L'}]$ belongs to $W^*$, there is a closed 1-form $\theta'$ of $M$ such that $\theta'|_{L'}=\lambda_0|_{L'}+dF$ for some function $F: L'\to \R$. We then pick an extension $F':M\to \R$ of $F$ and set $\theta:=\theta'-dF'$. Taking $\{\psi_t\}$ generated by $\theta$ gives the desired symplectic isotopy in $M$.
\end{pr*}

\begin{cor} \label{cor:H-exactness_flux_plusplus}
	By taking $r_0$ smaller if necessary, we have the following. If we have that $\mathrm{Flux}(\{\psi_t(L')\})\neq 0$, then $L'$ and $\psi_1(L')$ are in different Hamiltonian isotopy class in $M$. 
	
	Moreover, if the NLC holds on $T^*L$, then $L',L''\in \m L(\tau)$ with $L',L''\subseteq \m W_r(L)$, $r\leq r_0$, are Hamitlonian isotopic in $M$ if and only if their associated isotopy to an exact Lagrangian has the same flux.
\end{cor}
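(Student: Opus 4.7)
The plan is to combine the flux bound $|\mathrm{Flux}(\{\psi_t(L')\})|\leq Cr$ from Proposition~\ref{prop:H-exactness_flux_plus} with the discreteness of the Lagrangian flux group for $H$-rational Lagrangians. Recall that for any $H$-rational Lagrangian $L'$ of constant $\tau$, the Lagrangian flux group $\Gamma_{L'}\subseteq H^1(L';\R)$~---~formed by fluxes of Lagrangian loops based at $L'$~---~is contained in the lattice $\{\xi\in H^1(L';\R) : \xi(H_1(L';\Z))\subseteq\tau\Z\}$. Indeed, evaluating the flux of a Lagrangian loop on a cycle $\gamma\subseteq L'$ yields the $\omega$-area of a torus built by transporting $\gamma$ around the loop, and such areas lie in $\omega(H_2(M;\Z))\subseteq\tau\Z$. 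Thus $\Gamma_{L'}$ is discrete, with a uniform lower bound $\delta=\delta(L)>0$ on the norms of nonzero elements, valid for all $L'\subseteq \m W_r(L)$ since Lemma~\ref{lemma:central} canonically identifies $H_1(L';\Z)$ with $H_1(L;\Z)$ via projection.

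For the first assertion, I would shrink $r_0$ so that $Cr_0<\delta$. If $L'$ and $\psi_1(L')$ were Hamiltonian isotopic via some $\{\phi_t\}$, then concatenating $\{\psi_t(L')\}$ with $\{\phi_{1-t}(L')\}$ would produce a Lagrangian loop based at $L'$ whose flux coincides with $\mathrm{Flux}(\{\psi_t(L')\})$, since Hamiltonian isotopies contribute trivially to Lagrangian flux. This element of $\Gamma_{L'}$ has norm strictly below $\delta$ and therefore vanishes, contradicting the hypothesis.

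For the second assertion, let $\{\psi^{L'}_t\}$ and $\{\psi^{L''}_t\}$ denote the symplectic isotopies from Proposition~\ref{prop:H-exactness_flux_plus}, with fluxes $f'$ and $f''$ respectively. By the NLC applied inside $T^*L$, the exact Lagrangians $\psi^{L'}_1(L')$ and $\psi^{L''}_1(L'')$ are Hamiltonian isotopic to the zero section; after cutting off the generating Hamiltonians and possibly shrinking $r_0$ further, these isotopies realize in $M$ a Hamiltonian isotopy between $\psi^{L'}_1(L')$ and $\psi^{L''}_1(L'')$. Concatenating the three pieces yields a Lagrangian isotopy $\{\Gamma_t\}$ from $L'$ to $L''$ whose flux, under the canonical identification of $H^1(L';\R)$ with $H^1(L'';\R)$ via projection onto $L$, equals $f'-f''$. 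If in addition $L'$ and $L''$ are Hamiltonian isotopic in $M$, appending the reversed Hamiltonian isotopy produces a Lagrangian loop at $L'$ of flux $f'-f''\in\Gamma_{L'}$; shrinking $r_0$ so that $2Cr_0<\delta$ forces $f'=f''$. Conversely, if $f'=f''$, then $\{\Gamma_t\}$ has vanishing Lagrangian flux, and a standard extension argument along the lines of Lemma~6.6 of~\cite{Solomon2013} (already exploited in the proof of Proposition~\ref{prop:H-exactness_flux_plus}) lifts it to a Hamiltonian isotopy of $M$ sending $L'$ to $L''$.

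The main obstacle I expect is the careful bookkeeping of cohomology identifications: fluxes of Lagrangian isotopies naturally live in the $H^1$ of the starting Lagrangian, and summing them along a concatenation requires parallel transport. Verifying that this parallel transport is compatible with the projection-onto-$L$ identifications used to compare $f'$ and $f''$ is essentially automatic inside a small Weinstein neighborhood, but merits explicit verification.
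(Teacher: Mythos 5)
Your proposal is correct and follows essentially the same route as the paper: concatenating the symplectic isotopy with the reversed Hamiltonian isotopy to form a Lagrangian loop whose flux is quantized in $\tau\Z$ on integral cycles (the paper phrases your ``discrete flux group'' directly as $\mathrm{Flux}\in H^1(L';\tau\Z)$, viewing the swept surface as a cylinder with boundary on $L'$ rather than a torus), then killing it via the $Cr_0<\tau$ bound; and, for the second part, extending to the zero-section via the NLC and invoking Ono's Proposition~2.3 or Solomon's Lemma~6.7 to upgrade the resulting zero-flux symplectic isotopy to a Hamiltonian one. The only cosmetic difference is your packaging of the quantization through $\omega(H_2(M;\Z))$ instead of $\omega(H_2(M,L';\Z))=\tau\Z$, which changes nothing in substance.
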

\begin{pr*}
	Suppose that there is a Hamiltonian isotopy $\{\phi_t\}$ of $M$ sending $L'$ to $\psi_1(L')$. Then, the concatenation $\{L_t''\}$ of $\{\psi_t(L')\}$ and $\{\phi_t^{-1}(\psi_1(L'))\}$ is a loop, so that $\mathrm{Flux}(\{L''_t\})\in H^1(L';\tau\Z)$. Indeed, for every loop $\gamma$ of $L'$, $\mathrm{Flux}(\{L''_t\})(\gamma)\in\tau\Z$, since it is the area of a cylinder with boundary in $L'$. If we take $r_0<\frac{\tau}{C}$, then this is only possible if $\mathrm{Flux}(\{L''_t\})=0$. Since the flux of a Hamiltonian isotopy is zero, this implies the first result.
	
	If the NLC holds on $T^*L$, we get an extension $\{\psi_t\}_{t\in [0,2]}$ of $\{\psi_t\}_{t\in [0,1]}$ to a symplectic isotopy with $\psi_2(L')=L$ and same flux. Let $\{\psi'_t\}_{t\in [0,2]}$ be the corresponding isotopy for $L''$. If $L'$ and $L''$ are Hamiltonian isotopic, we can construct a loop similarly to above using that Hamiltonian isotopy, $\{\psi_t\}$ and $\{\psi'_t\}$. We then again get that the flux of this loop is zero, so that $\mathrm{Flux}(\{\psi_t(L')\})=\mathrm{Flux}(\{\psi'_t(L'')\})$. If the fluxes are the same, then extension and concatenation as above give a symplectic isotopy in $T^*L$ from $L'$ to $L''$ with zero flux. By Proposition 2.3 of~\cite{Ono2008} or Lemma~6.7 of~\cite{Solomon2013}, that isotopy must be Hamiltonian.
\end{pr*}

\paragraph{The Lagrangian $C^{0}$ flux conjecture} We now give a proof of Theorem~\ref{thm:C0-flux}. {To do so we first prove the following more technical, but stronger, version of the corollary.}

\begin{cor} \label{cor:C0-flux_plus}
	{Suppose that $L=L_0 \times L_1\times\dots\times L_k$, where $H_1(L_0;\R)=0$ and, for $i \geq 1$, $L_i$ satisfies $H_1(L_i;\R)=\R$ and admits a Lagrangian embedding in a Liouville domain $W_i$ with $SH(W_i)=0.$ Here, we allow $L_0$ to be a point or $k=0$.} The following statements are equivalent.
	\begin{enumerate}%[label=(\alph*)]
		\item The nearby Lagrangian conjecture holds in $T^*L$.
		\item Suppose that $L'$ is a Lagrangian diffeomorphic to $L$ in a symplectic manifold $M$ and that $L'$ is in the Hausdorff closure of $\LHam(L'')$ of a {\HR} Lagrangian $L''$ in $M$. Then, $L'\in\LHam(L'')$. The same holds if $\LHam(L'')$ is replaced by $\LSympZ(L'')$.
	\end{enumerate}
\end{cor}
\begin{pr*}
	Suppose that we are in Case~\textit{(a)}. The case of $\LSympZ(L'')$ follows directly from Proposition~\ref{prop:H-exactness_flux_plus} together with Theorems~\ref{thm:nbhd-of-H-exactness} and~\ref{thm:limits}. For the case of $\LHam(L'')$, take a sequence $\{L_i\}$ in that space with limit $L'$ diffeomorphic to $L$. By Theorem~\ref{thm:limits}, $L''$ is a {\HR} Lagrangian with same rationality constant as the $L_i$'s~---~the $L_i$'s respect the hypotheses of Theorem~\ref{thm:nbhd-of-H-exactness}, so that they are {\HE} in $\m W(L')$ for $i$ large. Since all $L_i$ are Hamiltonian isotopic to each other, their associated symplectic isotopy from Proposition~\ref{prop:H-exactness_flux_plus} must all have the same flux by Corollary~\ref{cor:H-exactness_flux_plusplus}. But by that proposition, that flux must tend to 0 as $L_i\to L'$. Therefore, for $i$ large, there is a symplectic isotopy in $T^*L'$ sending $L_i$ to $L'$ with zero flux; again, we suppose that the NLC holds here. By Proposition 2.3 of~\cite{Ono2008} or Lemma~6.7 of~\cite{Solomon2013}, that isotopy must be Hamiltonian, and we have closure, i.e. \textit{(a)} implies \textit{(b)}.
	
	Suppose that we are in Case~\textit{(b)}, and let $L''$ be an exact Lagrangian of $T^*L$. Then, $L_i:=\frac{1}{i}L''$ defines a sequence of Lagrangians whose Hausdorff limit is the zero-section $L$. {But note that $L_i$ is Hamiltonian isotopic to $L''$ since it is the image of the exact Lagrangian $L''$ by the Liouville flow of $T^*L$.} Since $\LHam(L'')$ is Hausdorff closed, $L\in \LHam(L'')$, i.e. the NLC holds in $T^*L$.
\end{pr*}

{The Lagrangian $C^0$ flux conjecture then follows directly.}

\begin{proof}[Proof of Theorem~\ref{thm:C0-flux}]
	For the diffeomorphism types enumerated in Theorem~\ref{thm:C0-flux}, the NLC holds. Case~\textit{(b)} of Corollary~\ref{cor:C0-flux_plus} thus implies the Hausdorff closure of $\LHam(L')$ and $\LSymp(L')$ in $\m L(L)$ whenever $L'$ is {\HR} and diffeomorphic to $L$.
\end{proof}

\begin{rem} \label{rem:equiv_NLC-flux}
	{Suppose that all exact Lagrangians of $T^*L$ are known to be {\em diffeomorphic} to the zero-section. Then, the equivalence of \textit{(a)} and \textit{(b)} in Corollary~\ref{cor:C0-flux_plus} actually proves that the NLC for $L$ and the Lagrangian $C^0$ flux conjecture for {\HR} Lagrangians diffeomorphic to $L$ are equivalent. This hypothesis is satisfied when simple homotopy type is enough to determine diffeomorphism type, e.g. when $\dim L\leq 3$, $L=S^{2k+1}/\Z_m$ for $m\geq 3$ (see~\cite{Milnor1966}), or $L=S^n$ for $n\in\{1,2,3,5,6,12,56,61\}$ (see~\cite{WangXu2017}).}
\end{rem}

\begin{rem} \label{rem:neigh_Ltau}
	If NLC holds for $T^*L$, Corollary~\ref{cor:H-exactness_flux_plusplus} actually allows us to identify a Hausdorff neighbourhood of $L$ in $\m L(\tau)$ with a neighbourhood of $(L,0)$ in $\LHam(L)\times W^*$, where we recall that $W$ is a complement of the image of the boundary map $H_2(M,L;\R)\to H_1(L;\R)$. We do not know how much this extends to a global homeomorphism.
\end{rem}

\subsection{Proofs of some applications} \label{subsec:proof_prop-ham-lag}
We now give a proof of some applications that appeared in the introduction and had not been shown yet. Before doing so, we first prove a technical lemma.

\begin{lem} \label{lem:C0-small_Ham}
	For every Lagrangian $L$, there exists $\delta>0$ with the following property. Suppose that $\psi:M\to M$ is a map such that $d_{C^0}(\Id,\psi)<\delta$ and $\psi(L)$ is Lagrangian. Then, $\psi(L)$ is {\HE} in some $\m W(L)$.
\end{lem}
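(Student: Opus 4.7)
The plan is to identify $L' := \psi(L)$ as a Lagrangian inside a Weinstein neighbourhood of $L$ and show that the bundle projection $\pi|_{L'}: L' \to L$ induces an isomorphism on $H_{1}$; this in turn forces \HE ness via Stokes' theorem.

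Concretely, first I would choose $\delta$ small enough that $\psi(L)$ sits inside some fixed Weinstein neighbourhood $\m W_{r}(L)$ of $L$, which I identify with $D^{*}_{r}L$ via the Weinstein symplectomorphism $\Psi$. Under this identification, $\omega = d\lambda$ for the tautological Liouville form $\lambda$, which vanishes along $L$, and I let $\pi: D^{*}_{r}L \to L$ denote the bundle projection. For any relative 2-cycle $u$ representing a class in $H_{2}(\m W_{r}(L), L'; \Z)$, Stokes' theorem yields $\int_{u} \omega = \int_{\partial u} \lambda$. Combining this with the long exact sequence of the pair and with the deformation retraction $\m W_{r}(L) \simeq L$, one identifies the period group $\omega(H_{2}(\m W_{r}(L), L'; \Z))$ with the subgroup $\{\int_{c} \lambda \,|\, c \in \ker (\pi|_{L'})_{*}\}$ of $\R$, where $(\pi|_{L'})_{*}: H_{1}(L'; \Z) \to H_{1}(L; \Z)$. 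So it suffices to prove that $(\pi|_{L'})_{*}$ is injective.

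For that, I would show that $\pi|_{L'}: L' \to L$ and $\psi|_{L}: L \to L'$ are mutually inverse homotopy equivalences. The composition $\pi|_{L'} \circ \psi|_{L}: L \to L$ is $C^{0}$-close to $\Id_{L}$ (since $\psi$ is $C^{0}$-close to $\Id_{M}$ and $\pi$ is uniformly Lipschitz on $\m W_{r}(L)$), and hence is homotopic to $\Id_{L}$ via geodesic straight-line homotopies on $L$ for $\delta$ small. In the main application~---~and more generally whenever $\psi$ is a homeomorphism, in which case $\psi^{-1}$ is itself $C^{0}$-close to $\Id_{M}$~---~the restriction $\psi|_{L}: L \to L'$ is itself a homeomorphism, and one shows symmetrically that $\psi^{-1}|_{L'}$ is $C^{0}$-close to $\pi|_{L'}$ on $L' \to L$; hence $\pi|_{L'}$ inherits from $\psi^{-1}|_{L'}$ the property of being a homotopy equivalence. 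Both induced maps on $H_{1}$ are thus isomorphisms, so $\ker (\pi|_{L'})_{*} = 0$ and $L'$ is \HE in $\m W_{r}(L)$.

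The main obstacle will be this last homotopy argument in full generality, when $\psi$ is only assumed to be a map: one then loses access to $\psi^{-1}$ and must instead produce a right-inverse-up-to-homotopy of $(\pi|_{L'})_{*}$ by direct geometric means, which requires uniform control on the tubular neighbourhood of $L'$ as $\psi$ varies. For the principal application to Hamiltonian diffeomorphisms of Conjecture~\ref{conj:ham-lag}, however, $\psi$ is a diffeomorphism, and this complication disappears.
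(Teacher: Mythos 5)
Your argument is correct in the case it actually covers ($\psi$ a homeomorphism, which suffices for the paper's applications to Hamiltonian diffeomorphisms), and it runs on the same engine as the paper's proof: extract a geodesic homotopy from $C^0$-closeness, then use Stokes and the long exact sequence of $(\m W_r(L),L')$ to identify the period group with the values of $\lambda_0$ on $\ker\bigl((\pi|_{L'})_*\bigr)$. The difference is in the packaging, and it matters for the generality. Because you reduce to \emph{injectivity} of $(\pi|_{L'})_*$ on $H_1$, you need $\pi|_{L'}$ to be a two-sided homotopy equivalence, which forces you to invoke $\psi^{-1}$; this is exactly why the general-map case looks like an obstacle to you. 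The paper instead chooses the ambient Riemannian metric so that it restricts to a Sasaki metric on $\m W(L)$; then the short geodesics from $x$ to $\psi(x)$ stay inside $\m W(L)$, and $(x,t)\mapsto\gamma_x(t)$ is a homotopy \emph{in $\m W(L)$} from the inclusion $\iota\colon L\hookrightarrow\m W(L)$ to $\psi\circ\iota$. Hence $\psi\circ\iota$ is a homotopy equivalence onto a space containing $L'$, and one reads off $H_2(\m W(L),L')=0$ directly: a single one-sided homotopy suffices, no inverse of $\psi$ is ever used, and no separate injectivity argument for $(\pi|_{L'})_*$ is needed. Your identity $(\pi|_{L'})_*\circ(\psi|_L)_*=\id$ is precisely the image of that homotopy under $\pi$, so the two proofs are close; the paper's version simply arranges the homotopy upstairs in $\m W(L)$, where it kills the relative $H_2$ in one stroke and dissolves the ``main obstacle'' you flag at the end. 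One small technical point you should make explicit in your version: when you say two maps into $L$ (or into $L'$) that are $C^0$-close in $M$ are homotopic via geodesics \emph{on $L$}, you are implicitly comparing the extrinsic and intrinsic metrics on a compact submanifold; this is fine, but it is the same issue the Sasaki-metric choice is designed to handle cleanly.
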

\begin{pr*}
	Take a Riemannian metric $g$ on $M$ which corresponds to a Sasaki metric on $T^*L$ on a Weinstein neighbourhood $\m W(L)$. Its geodesics starting at $L$ and going to $L'=\psi(L)$ stay in $\m W(L)$ (see Lemma~A.4 of~\cite{Chasse2024b} for example). Therefore, if we assume that $\delta$ is smaller than the injectivity radius $r_\mathrm{inj}(TM|_L)$ of the Riemannian exponential of $g$ restricted to $TM|_L$, we get for every $x\in L$ a unique geodesic $\gamma_x:[0,1]\to M$ such that $\gamma_x(0)=x$, $\gamma_x(1)=\psi(x)$, and $\gamma_x([0,1])\subseteq \m W(L)$. Moreover, $\gamma_x$ smoothly depends on $x$. Therefore, $(x,t)\mapsto \gamma_x(t)$ defines a smooth homotopy in $\m W(L)$ from the inclusion $\iota:L\hookrightarrow \m W(L)$ to $\psi\iota$. Since $\iota$ is a homotopy equivalence, then so must be $\psi\iota$. In particular, $H_2(\m W(L),L')=0$, and $L'$ is $H$-exact.
\end{pr*}

Using Theorem~\ref{thm:from-H-exact-to-exact}, this already implies that Conjecture~\ref{conj:ham-lag} holds for all {\HR} Lagrangians. We now prove the stronger statement that appears in the introduction.

\begin{proof}[Proof of Proposition~\ref{prop:ham-lag}]
	As in Lemma~\ref{lem:C0-small_Ham}, take a Riemannian metric $g$ on $M$ which corresponds to a Sasaki metric on $T^*L$ on a Weinstein neighbourhood $\m W_r(L)$ contained in $K$. Let $r_\mathrm{inj}(TM|_K)$ be the injectivity radius of the Riemannian exponential of $g$ restricted to $TM|_K$, and take $\delta=\min\{r,r_\mathrm{inj}(TM|_K)\}$. For $\phi\in\Ham(M)$, we then get a homotopy $\{f_t\}$ from $f_0=\Id$ to $f_1=\phi$ using geodesics as in that lemma. Furthermore, by geodesic convexity of $\m W_r(L)$, $f_t(L)\subseteq \m W_r(L)$ for all $t$ and $f_1(L)=\phi(L)$ is {\HE} in the neighborhood. \par
	
	Pick a Hamiltonian isotopy $\{\phi_t\}$ with $\phi_1=\phi$ and denote by $c:S^1\to C^\infty(M,M)$ the loop of smooth maps given by the concatenation of $\{f_t\}$ with $\{\phi_{1-t}\}$. Let $x_0$ be a fixed point of $\phi$ such that $t\mapsto \phi_t(x_0)$ is contractible~---~this always exists by Floer theory. Then, $f_t(x_0)=x_0$ for all $t$ so that $t\mapsto c(t)(x_0)$ is a contractible loop. But if $x$ is any point in $M$, there is a path $\alpha$ from $x$ to $x_0$, so that $(t,s)\mapsto c(t)(\alpha(s))$ defines a free homotopy from the loop defined by $x$ to the one defined by $x_0$. In particular, all loops $t\mapsto c(t)(x)$ are contractible. \par
	
	Given a loop $\gamma$ of $\phi(L)$, we consider the map $g:\T^2\to M$ defined by $g(t,s)=c(t)(\gamma(s))$. This decomposes into two cylinders: $\{f_t(\gamma(s))\}$ and $\{\phi_{1-t}(\gamma(s))\}$. Since $f_t(L)\subseteq \m W_r(L)$ for all $t$ and $L$ is exact in $\m W_r(L)$, the area of the first cylinder is $\lambda_0(\gamma)$. But the area of the second cylinder is the flux of the isotopy $\{\phi_{1-t}\}$ on $\gamma$, which is zero since the isotopy is Hamiltonian. Therefore, $g$ has area $\lambda_0(\gamma)$. \par
	
	If we are in Case~\textit{(a)}, then note that the loop $t\mapsto g(t,0)$ is contractible by the above discussion, so that the area of $g$ is actually the area of a sphere. We then conclude similarly to Theorem~\ref{thm:from-H-exact-to-exact}: pick a base $\{\gamma_i\}$ of $H_1(\phi(L))^\mathrm{free}$, then the corresponding $g_i$ respect
	\begin{align*}
		\omega(\pi_2(M))\ni |\omega(g_i)|=|\lambda_0(\gamma_i)|\leq r\ell_g^{\min}(\pi_*\gamma_i)
	\end{align*}
	by Proposition~\ref{prop:estimate_taut-form}. It thus suffices to take $r$~---~and thus $\delta$~---~small enough so that $r\max_i\ell_g^{\min}(\pi_*\gamma_i)$ is smaller than the positive generator of $\omega(\pi_2(M))$. This ensures that $\lambda_0|_{\phi(L)}$ vanishes, i.e.\ $\phi(L)$ is exact in $\m W_r(L)$. \par
	
	If we are in Case~\textit{(b)}, then note that for every $\gamma$, there is some $k$ such that $\gamma^k$ is contractible in $M$. In particular, there is a homotopy $\alpha$ from $\gamma^k$ to $x_0$. Then, $(t,s)\mapsto c(t)(\alpha(s))$ defines a homotopy from $g^k$ defined to the loop $t\mapsto \phi_t(x_0)$, which is contractible by hypothesis. Therefore,
	\begin{align*}
		\lambda_0(\gamma)=\frac{1}{k}\lambda_0(\gamma^k)=\frac{1}{k}\omega(g^k)=0
	\end{align*}
	for all loops $\gamma$, i.e.\ $\phi(L)$ is again exact in $\m W_r(L)$.
\end{proof}

Note that we get a rigidity result for sequences of Hamiltonian or symplectic diffeomorphisms from Theorem~\ref{thm:limits} and Theorem~\ref{thm:C0-flux}.
\begin{cor} \label{cor:sequence-diffeo}
	Let $\{\psi_i\}$ be a sequence of symplectomorphisms with (weak) $C^0$ limit $\psi\in C^0(M,M)$, and let $L\in\m L(\tau)$. If $\psi(L)$ is a smooth $n$-submanifold, then $\psi(L)\in\m L(\tau)$.  
	
	If, furthermore, the NLC holds on $T^*L$ and
	\begin{enumerate}%[label=(\alph*)]
		\item if $\{\psi_i\}\subseteq\Ham(M)$, then $\psi(L)\in\LHam(L)$;
		\item if $\{\psi_i\}\subseteq\Symp_0(M)$, then $\psi(L)\in\LSympZ(L)$.
	\end{enumerate}
\end{cor}

Finally, we prove the local contractibility of $\LHam(L)$ under the hypothesis that Conjecture~\ref{conj: r Vit} holds for $L$.
\begin{proof}[Proof of Proposition~\ref{prop:LHam-loc-contractible}]
	Note that it suffices to prove this statement at $L$. Fix a Weinstein neighbourhood $\Psi:D^*_rL\to \m W(L)$ as given by the conclusions of Theorems~\ref{thm:nbhd-of-H-exactness} and~\ref{thm:from-H-exact-to-exact}. Then, every $L'\in\LHam(L)$ which is in $\m W(L)$ is exact in that neighbourhood. We can thus take
	\begin{align*}
		(L',t)\mapsto \Psi(t\Psi^{-1}(L'))
	\end{align*}
	to be the contraction. Indeed, exactness in $\m W(L)$ ensures that this is a Hamiltonian isotopy for all $t>0.$ In particular $\Psi(t\Psi^{-1}(L'))$ is Hamiltonian isotopic to $L'$ and hence it is in $\LHam(L)$. But exactness also implies that the projection $\Psi^{-1}(L')\to T^*L\to L$ is a homotopy equivalence~\cite{AbouzaidKragh2018}. In particular, that projection must be surjective, otherwise $H_n(L')\to H_n(L)\neq 0$ would be zero. Therefore, $L'$ being close to $L$ implies that $L$ is close to $L'$ and hence that $L'$ and $L$ are Hausdorff-close. This means that the Hausdorff limit of $\Psi(t\Psi^{-1}(L'))$ as $t\to 0$ is precisely $L$, i.e.\ the above contraction is indeed Hausdorff-continuous.
\end{proof}

\section{Further applications and discussions} \label{sec:discussion}
%\textcolor{teal}{[TODO: Add remark about Buhovsky-Opstein]}
We close the paper with some remarks and applications that were omitted from the introduction to improve readability but that might still be of interested to the more specialized reader. \par

\subsection{Variants of the Lagrangian flux conjecture}

In this section, we briefly present variants of Speculation~\ref{conj:flux} by changing either the topology, the space $\LLag(L)$, or by adding a certain incompressibility hypothesis on $L$. %The space $\LLag(L)$ can be replaced, for instance, with the space $\Man(L)$, of all submanifolds of $M$ with the same diﬀeomorphism type as $L$, and the space $\Man_n$, of all $n$-dimensional submanifolds of $M$.

\paragraph{Various closures}
Theorem~\ref{thm:C0-flux} states that, whenever the NLC holds on $T^*L$, $\LHam(L)$ and $\LSympZ(L)$ are Hausdorff-closed in $\cl{L}(L)$, the space of Lagrangians of $M$ diffeomorphic to $L$. This is already larger then the space $\LLag(L)$ of all Lagrangians that are Lagrangian isotopic to $L$, which appears in Speculation~\ref{conj:flux}. However, in certain cases, we can prove closure in even larger spaces. \par

Indeed, if $M^{2n}$ is tame, i.e.\ $M$ is nice outside some compact set, then the Hausdorff limit of a sequence in $\LSympZ(L)$ is Lagrangian whenever it is a $n$-dimensional submanifold~---~this follows from the classical result of Laudenbach and Sikorav~\cite{LaudenbachSikorav1994} (see the proof of Theorem~\ref{thm:limits} for more details). Therefore, in such a case, we actually get closure of $\LHam(L)$ and $\LSympZ(L)$ in $\Man(L)$, the space of smooth $n$-dimensional manifolds of $M$ that are diffeomorphic to $L$, in the cases where the NLC holds. \par

Looking at the proof of Theorem~\ref{thm:C0-flux} in Section~\ref{subsec:proof_prop_H-exactness_flux}, we see that~---~whatever the diffeomorphism type of $L$ is~---~if a sequence $\{L_k\}$ in $\LSympZ(L)$ converges to some Lagrangian $L'$, then $L_k$ must be {\HE} in a Weinstein neighbourhood of $L$ for $k$ large. In view of Lemma~\ref{lemma:central}, this means that $L$ and $L'$ are simply homotopy equivalent. As noted in Remark~\ref{rem:equiv_NLC-flux}, this is sometimes enough to conclude that $L$ and $L'$ are diffeomorphic. In such cases, we can thus get closure of $\LHam(L)$ and $\LSympZ(L)$ in $\Man_n$, the space of smooth $n$-dimensional manifolds of $M$. \par

To resume, the following is a consequence of the proof of Theorem~\ref{thm:C0-flux}:
\begin{cor} \label{cor:C0-flux_various-closures}
	Let $L$ be a {\HR} Lagrangian for which Conjecture~\ref{conj: r Vit} and the NLC hold, e.g.\ it is one of the diffeomorphism type appearing in Theorem~\ref{thm:C0-flux}. Then, $\LHam(L)$ and $\LSympZ(L)$ are closed in
	\begin{enumerate}%[label=(\roman*)]
		\item $\Man(L)$ if $M$ is tame;
		\item $\Man_n$ if $M$ is tame and $n=\dim L\leq 3$.
	\end{enumerate}
\end{cor}

%\begin{rem} \label{rem:flux_various-closures}
%Note that one can upgrade from \textcolor{teal}{$\LLag(L)$} to $\Man(L)$ under the additional condition that the ambient symplectic manifold is tame. 

%Furthermore, one can upgrade from $\Man(L)$ to $\Man_n$ in some contexts as, for example, if $n=2$. Indeed, any {\HE} Lagrangian in the cotangent bundle of a surface has the same diffeomorphism type as that surface (see Lemma~\ref{lemma:central} below). This is a nontrivial update: Polterovich~\cite{Polterovich1993} constructed Lagrangian tori in the cotangent bundle of any flat manifold; these tori can be made to be arbitrarily close to the zero-section. We discussed these examples in more details at the very end of Section~\ref{sec:from-HE-exact}.
%\end{rem}

\paragraph{The Lagrangian $C^1$ flux conjecture.}
A natural variant of Speculation~\ref{conj:flux} is obtained by replacing closedness in the Hausdorff topology with closedness in the $C^1$ topology. We call this the Lagrangian $C^1$ flux conjecture. \par

By $C^1$ topology, we mean the one constructed as follows. Fix a Riemannian metric $g$ on $M$. We say that a closed connected half-dimensional submanifold $N'$ is $\epsilon$-$C^1$-close to another one $N$ if $N'$ is in a tubular neighbourhood of $N$ and there is a normal vector field $\nu$ along $N$ such that $\|\nu\|<\epsilon$ and $\exp(\nu(N))=N'$. We then set
\begin{align*}
	B(N,\epsilon):=\left\{N'\ \middle|\ \text{$N'$ is $\epsilon$-$C^1$-close to $N$}\ \&\ \text{vice-versa}\right\}.
\end{align*}
The $C^1$ topology on $\Man_n$ is then the topology generated by the $B(N,\epsilon)$'s. One can easily check that this is independent of the choice of Riemannian metric. \par

With our methods, we get the following.
\begin{cor} \label{cor:C1-flux}
	Let $L$ be a {\HR} Lagrangian in a symplectic manifold $M$. Then, $\LHam(L)$ and $\LSympZ(L)$ are $C^1$-closed in $\Man_n$.
\end{cor}

There is no requirement that $M$ be tame, since the Lagrangian condition is $C^1$-closed. There is no restriction on the diffeomorphism type, because Lagrangians which are $C^1$-close to $L$ are graphs of 1-forms in $\m W(L)$, and graphs are necessarily {\HE}. Likewise, the NLC is not needed since exact graphs are Hamiltonian isotopic to the zero-section in $\m W(L)$. Note that $C^1$-close Lagrangians are necessarily diffeomorphic so that closure in $\Man_n$ is the same as closure in $\Man(L)$. \par

The Lagrangian $C^1$ flux conjecture has been studied previously by \linebreak Ono~\cite{Ono2008} and Solomon~\cite{Solomon2013} in the case when $M$ is closed or a cotangent bundle. They proved that it holds when $L$ has Maslov class zero and is unobstructed in the sense of \cite{FOOO} and when the so-called Lagrangian flux group of $L$ is discrete, respectively. When $L$ is {\HR}, the Lagrangian flux group is automatically discrete. Therefore, our improvement with regards to Solomon's result is that we allow $M$ to be open~---~otherwise, we only have proved a subcase. As for Ono's, our condition is somewhat orthogonal to his: he needs no bad disks, but we ask for a lot of them. \par

\paragraph{$H$-incompressible Lagrangians}
Just like changing the topology on $\cl{L}(L)$ allowed us to remove the restriction on the diffeomorphism type in Theorem~\ref{thm:C0-flux}, adding a topological constraint on $L$ can achieve similar goals. \par

Indeed, if $L$ is such that the boundary map $\del: H_2(M,L;\R)\to H_1(L;\R)$ vanishes, then any nearby Lagrangians in $\LLag(L)$ must be {\HE} in a Weinstein neighbourhood of one another. This is because if $L'$ is in a Weinstein neighbourhood $\m W(L'')$, with $L',L''\in \LLag(L)$, and $A\in H_2(\m W(L''),L')$, then $\omega(A)=\lambda_0(\del A)=0$. This alleviates the need to estimate our Cieliebak--Mohnke-type capacities, so that the first part of the proof of Theorem~\ref{thm:C0-flux} goes through without any restriction on the diffeomorphism type of $L$. \par

It is well known that, in this setting, every Lagrangian isotopy is actually generated by a symplectic one (see, for example, Lemma~6.6 of~\cite{Solomon2013}). This means that we do not need the {\HR ity} condition to produce a symplectic isotopy as in Proposition~\ref{prop:H-exactness_flux_plus}. We thus only need to ensure that the Lagrangian loop $\{L''_t\}$ appearing in the proof of Corollary~\ref{cor:H-exactness_flux_plusplus} has flux in $H^1(L';\Gamma)$ for some discrete group $\Gamma\subseteq\R$. But, by the topological constraint, every cylinder with boundary $L'$ Lagrangian isotopic to $L$ is real homologous to a torus, so that
\begin{align*}
	\mathrm{Flux}(\{L''_t\})(\gamma)\in \left\{\int f^*\omega\ \middle|\ f:\T^2\to M\right\}=:\Gamma_{\T^2}
\end{align*}
for every loop $\gamma$ of $L'$. Therefore, the rest of the proof goes through as long as $\Gamma_{\T^2}$ is discrete. \par

We call Lagrangians with $\del=0$ \emph{$H$-incompressible}, since the constraint is equivalent to $H_1(L;\R)\to H_1(M;\R)$ being injective, a homological variant of the usual notion of incompressibility. Note that it implies the usual notion of incompressibility whenever $\pi_1(L)$ is abelian and torsionfree, e.g.\ when $L$ is a torus. \par

\begin{cor} \label{C0-flux_incompressible}
	Let $L$ be a $H$-incompressible Lagrangian in a symplectic manifold $M$ with $\Gamma_{\T^2}$ discrete. Suppose that the NLC holds on $T^*L$. Then, $\LHam(L)$ is Hausdorff-closed in $\LLag(L)=\LSympZ(L)$.
\end{cor}

This covers more cases than Theorem~\ref{thm:C0-flux}, e.g.\ the product of a non-nullhomo- logous loop in a Riemann surface of genus at least 1 with a non-nullhomologous loop in a Riemann surface of genus at least 2. \par

\subsection{More examples satisfying the main results}\label{sec:examples}
We give a few examples where Speculation~\ref{conj:weak} follows from the results above. We will only specify a Lagrangian submanifold in a symplectic manifold without worrying about it being rational or not, since given an irrational Lagrangian submanifold $L'$ of a rational symplectic manifold, we can construct a nearby rational Lagrangian $L$ by an arbitrarily $C^1$-small perturbation. 

First, from Theorem~\ref{thm:conjecture} we know that Conjecture~\ref{conj: r Vit} holds for closed connected manifolds $L$ which are covered by $L_0\times \T^m$, where $H_1(L_0;\R) = 0$ and $m$ is allowed to be equal to zero. For example, {this includes every manifold admitting a flat metric by Bieberbach's theorem.} Curiously, such manifolds also include every mapping torus $L$ of a diffeomorphism $f\in \Diff(L_0)$ of a simply connected manifold $L_0$ that has finite order in the smooth mapping class group, i.e.\ $f^k$ is smoothly isotopic to $\id$ for some integer $k.$ Indeed, the multiplication by $k$ map $S^1 \to S^1$ pulls the bundle $L \to S^1$ to a bundle diffeomorphic to $L_0\times S^1$. Furthermore, note that the class of manifolds covered by  $L_0\times \T^m$ (where $m$ and $L_0$ are not fixed) is also closed under products, so that Conjecture~\ref{conj: r Vit} also holds for all products of the above examples.

Second, we give a few examples of Lagrangian submanifolds $L$ where Speculation~\ref{conj:weak} holds. By Theorem~\ref{thm:conjecture} and {Theorem~\ref{thm:speculation-B}}, this includes those $L$ such that $H_1(L;\R)=\R$ which admit embeddings to a Liouville domains $W$ with $SH(W) = 0.$ For instance if $TL_0\otimes\C$ is trivial and $H_1(L_0;\R)=0$, then $L=L_0\times S^1$ embeds as a Lagrangian in $\C^n$ by the Gromov--Lees $h$-principle~\cite{Gromov1970,Lees1976} and a result of Audin, Lalonde, and Polterovich~\cite{AudinLalondePolterovich1994}. In another direction, Ekholm, Eliashberg, Murphy, and Smith~\cite{EkholmEliashbergMurphySmith2013} showed that, given any 3-manifold $L_0$, $L=L_0\#(S^1\times S^2)$ embeds as a Lagrangian in $\C^3$. But, by the van Kampen theorem, $\pi_1(L)=\pi_1(L_0)*\pi_1(S^1\times S^2)$, so that $H_1(L;\R)=H_1(L_0;\R)\oplus\R$. Therefore, {Theorem~\ref{thm:speculation-B}} covers $L=L_0\#(S^1\times S^2)$ with $H_1(L_0;\R)=0$, e.g.\ $L_0$ can be a (connected sum of) lens spaces. Finally, the Lagrangian Grassmannian $L=\Lambda_n$ admits a Lagrangian embedding in $\mathrm{Sym}(\C^n)=\C^{n(n+1)/2}$ (see, for example, \cite{AudinLalondePolterovich1994}), and $\pi_1(\Lambda_n)=\Z$.

Third, recall that, by Theorem~\ref{thm:conjecture}, the class of manifolds for which Conjecture~\ref{conj: r Vit} holds is closed under covering. In particular, Speculation~\ref{conj:weak} holds for the Klein bottle. Note that the Klein bottle admits a displaceable Lagrangian embedding in $S^2\times\C$. It is obtained from the usual Lagrangian Klein bottle in $S^2\times S^2$ (see, for example, \cite{Evans2022} and \cite{adaloglouEvans2024}) by removing a point on the second copy of $S^2$ and identifying $S^2 \setminus \{pt\}$ with $\D\subseteq\C$. Again, it is displaceable, because $\D$ is. In fact, the Klein bottle can even be made to be monotone. See Appendix~\ref{sec:klein-bottles} for a standalone proof of $H$-exactness for Lagrangian Klein bottles in the cotangent bundle of a Klein bottle.
%
%	We conclude with one additional case when we can establish Speculation~\ref{conj:strong}: when $L$ is a 2-sphere or a projective plane. Indeed, any other such Lagrangian in $\m W(L)$ is then automatically exact in that neighbourhood, so there is no need for Theorems~\ref{thm:nbhd-of-H-exactness} or~\ref{thm:from-H-exact-to-exact}. Since the NLC is known to hold in $T^*S^2$~\cite{Hind2004} and $T^*\R P^2$~\cite{HindPinsonnaultWu2016}, we thus directly get the following.
%	\begin{cor}
	%		Speculations~\ref{conj:strong} {and~\ref{conj:flux}} hold for Lagrangian 2-spheres or projective planes.
	%	\end{cor}

%\textcolor{teal}{	
	% Yet another approach one could take to the above questions for $L=\bb T^2$ is based on the resolution~\cite{DimitroglouGoodmanIvrii2016} in this case of the nearby Lagrangian conjecture. However, since this does not fit in the general framework developed here, we omit them from this version of the paper. Nonetheless, in Appendix~\ref{sec:klein-bottles}, we present a standalone proof of $H$-exactness for Lagrangian Klein bottles in the cotangent bundle of a Klein bottle as the proof is short and could be of independent interest.
	%}

\subsection{Spaces of Lagrangians with fixed {\HR}ity constant}
{In this section, we derive some applications of our main results to the space $\m L(L,\tau)$ of all Lagrangians of $M$ with the diffeomorphism type of $L$ and {\HR ity} constant $\tau$. }\par

From Theorems~\ref{thm:nbhd-of-H-exactness} and~\ref{thm:from-H-exact-to-exact}, we get the following.
\begin{cor} \label{cor:LHam-open}
	Let $L$ be a {\HR} Lagrangian in a symplectic manifold, and denote by $\tau$ its {\HR ity} constant. Then, $\LSympZ(L)$ is open in $\m L(L,\tau)$ in the $C^1$ topology. If moreover Conjecture~\ref{conj: r Vit} and the NLC holds on $T^*L$, then the same holds in the Hausdorff topology.
\end{cor}
\begin{pr*}
	Note that it suffices to prove that there is an open neighbourhood of $L$ in $\m L(L,\tau)$ which is fully in $\LSympZ(L)$. Let thus $\Psi:D^*_rL\to \m W_r(L)$ be the Weinstein neighbourhood given by Proposition~\ref{prop:H-exactness_flux_plus}. Then, every graph in $\m W(L)$ must be, up to a global symplectic isotopy, exact. Since exact graphs are Hamiltonian isotopic to the zero-section, such a graph must thus be in $\LSympZ(L)$. This proves the $C^1$ case. \par
	
	For the Hausdorff case, suppose that $r$ is also small enough so that Theorem~\ref{thm:nbhd-of-H-exactness} and Proposition~\ref{prop:H-exactness_flux_plus} hold in $\m W_r(L)$. Then, any $L'\in \m L(L,\tau)$ such that $L'\subseteq \m W(L)$ must be, up to some global symplectic isotopy, exact in $\m W(L)$~---~we still denote by $L'$ its image under the isotopy. As in the proof of Proposition~\ref{prop:LHam-loc-contractible} in Section~\ref{subsec:proof_prop-ham-lag}, we note that the path $t \mapsto \Psi(t\Psi^{-1}(L'))$, $t\in [0,1]$, is continuous in the Hausdorff topology. Furthermore, it is a Hamiltonian isotopy for all $t>0$. In particular, $L$ must be in the Hausdorff closure of $\LHam(L')\subseteq \LSympZ(L')$. But $\LSympZ(L')$ is Hausdorff closed by Theorem~\ref{thm:C0-flux} and the hypotheses on $L$. Therefore, $L'\in\LSympZ(L)$.
\end{pr*}

Together with the Lagrangian flux conjectures, this yields the following result.

\begin{cor} \label{cor:components-Ltau}
	Let $L$ and $\tau$ be as above. The (path) connected components of $\m L(L,\tau)$ in the $C^1$ topology are precisely the orbits of $\Symp_0(M)$. In particular, the quotient $\m L(L,\tau)/\Symp_0(M)$ is discrete in the induced topology. If moreover Conjecture~\ref{conj: r Vit} and the NLC holds on $T^*L$, then the same holds in the Hausdorff topology.
\end{cor}

For example, this means that a $\rho$-monotone Clifford torus can never be reached from a Chekanov torus (or any monotone special torus) by a Hausdorff-continuous path in $\m L(\T^2,2\rho)$. Contrast this with the fact that all these tori are Lagrangian isotopic~\cite{DimitroglouGoodmanIvrii2016}. \par

\begin{pr*}
	Combining Corollaries~\ref{cor:C1-flux} and~\ref{cor:LHam-open}, we get that for all $L\in\m L(L,\tau)$, the orbit $\LSympZ(L)$ is both closed and open in $\m L(L,\tau)$ with the $C^1$ topology. Therefore, $\LSympZ(L)$ must be a union of connected components of $L\in\m L(L,\tau)$ by point-set topology. Since $\LSympZ(L)$ is obviously path connected, it must be both a connected component and a path connected component of $L\in\m L(L,\tau)$. The proof in the Hausdorff setting is completely analogous.
\end{pr*}

Note that, when $H_1(L;\R)\to H_1(M;\R)$ is zero, the role of $\LSympZ(L)$ in the above proof can be replaced by $\LHam(L)$. In particular, both $\LSympZ(L)$ and $\LHam(L)$ are the connected component of $\m L(\tau)$ containing $L$ in the $C^1$ topology, so they must be equal. This can be seen as a generalization that $\mathrm{Symp}_0(M)=\mathrm{Ham}(M)$ for closed manifolds with $H_1(M;\R)=0$.
\begin{cor} \label{cor:LSymp-to-LHam}
	Let $L$ be {\HR} and such that $H_1(L;\R)\to H_1(M;\R)$ is zero. There is a symplectic isotopy $\{\psi_t\}$ of $M$ such that $\psi_1(L)=L'$ if and only if there is a Hamiltonian isotopy $\{\phi_t\}$ such that $\phi_1(L)=L'$. In other words, $\LSympZ(L)=\LHam(L)$.
\end{cor}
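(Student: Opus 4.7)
The plan is to adapt the proof of Corollary \ref{cor:LHam-open} so that $\LHam(L)$ plays the role previously played by $\LSympZ(L)$, using the additional hypothesis on $H_1(L;\R)\to H_1(M;\R)$ to strengthen Theorem \ref{thm:from-H-exact-to-exact}. The target is to show $\LHam(L)$ is both open and closed inside $\m L(\tau)$ in the $C^1$ topology, where $\tau$ is the $H$-rationality constant of $L$; since $\LSympZ(L)$ lies inside $\m L(\tau)$ and is $C^1$-connected (being the continuous image of the connected group $\Symp_0(M)$ under the orbit map), the clopen set $\LHam(L)\cap \LSympZ(L)$ must equal $\LSympZ(L)$. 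Combined with the trivial inclusion $\LHam(L)\subseteq \LSympZ(L)$, this gives the desired equality.

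First I would check $C^1$-openness of $\LHam(L)$ in $\m L(\tau)$. Given $L''\in\LHam(L)$, pick a Weinstein neighbourhood $\Psi:D^*_rL''\to \m W_r(L'')$ small enough that both Theorem~\ref{thm:nbhd-of-H-exactness}-type graph control and the second statement of Theorem~\ref{thm:from-H-exact-to-exact} apply at $L''$ (note that $L''$ inherits from $L$ the hypothesis $H_1(L'';\R)\to H_1(M;\R)=0$, since $L''=\phi(L)$ for some $\phi\in\Ham(M)$). Any $L'''\in\m L(\tau)$ which is $C^1$-close to $L''$ is a graph in $\m W_r(L'')$, hence automatically $H$-exact there. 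By the second part of Theorem~\ref{thm:from-H-exact-to-exact}, $L'''$ is then actually exact in $\m W_r(L'')$, so it is Hamiltonian isotopic to $L''$ inside $\m W_r(L'')$, and therefore $L'''\in\LHam(L'')=\LHam(L)$.

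Next I would invoke Corollary~\ref{cor:C1-flux}, which gives that $\LHam(L)$ is $C^1$-closed in $\Man_n$, and hence in $\m L(\tau)$. Putting these two facts together shows $\LHam(L)$ is a union of $C^1$-connected components of $\m L(\tau)$. Since $L\in \LHam(L)\cap \LSympZ(L)$ and $\LSympZ(L)\subseteq \m L(\tau)$ is $C^1$-connected, the component containing $L$ lies entirely in $\LHam(L)$, yielding $\LSympZ(L)\subseteq \LHam(L)$.

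The main obstacle I anticipate is being careful about the upgrade from the $\LSympZ$-version of openness (Corollary~\ref{cor:LHam-open}) to the $\LHam$-version: in the original argument, one only concluded exactness up to a \emph{global} symplectic isotopy coming from Proposition~\ref{prop:H-exactness_flux}, and this is precisely the sort of symplectic but potentially non-Hamiltonian correction we need to eliminate here. The crucial observation is that, under the vanishing of $H_1(L;\R)\to H_1(M;\R)$, the second clause of Theorem~\ref{thm:from-H-exact-to-exact} provides the stronger conclusion that $H$-exact nearby Lagrangians in $\m L(\tau)$ are already exact without any ambient symplectic correction, so the passage from exactness in $\m W$ to Hamiltonian isotopy is direct.
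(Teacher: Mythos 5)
Your proposal is correct and follows essentially the same route as the paper: the authors prove this corollary precisely by observing that, under the vanishing of $H_1(L;\R)\to H_1(M;\R)$, the second clause of Theorem~\ref{thm:from-H-exact-to-exact} lets one replace $\LSympZ(L)$ by $\LHam(L)$ in the clopen-component argument of Corollaries~\ref{cor:LHam-open} and~\ref{cor:components-Ltau}, so that both orbits are the $C^1$-connected component of $\m L(\tau)$ containing $L$. You correctly identify the key point, namely that the extra hypothesis eliminates the global symplectic correction of Proposition~\ref{prop:H-exactness_flux}, which is exactly the content of the paper's one-line justification.
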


We also have the following.
\begin{cor} \label{cor:nonHausdorff-points}
	The space $\cup_{\tau\geq 0}\m L(L,\tau)/\Symp_0(M)$ is Hausdorff in the topology induced by the $C^1$ topology. In particular, the quotient $\LLag(L)/\Symp_0(M)$ can only be non-Hausdorff at orbits corresponding to $H$-irrational Lagrangians. The same holds for $\Symp_0(M)$ replaced by $\Ham(M)$.
\end{cor}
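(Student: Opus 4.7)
The plan is to reduce the Hausdorffness assertion to the discreteness of each stratum $\m L(L,\tau)/\Symp_0(M)$ already given by Corollary~\ref{cor:components-Ltau}, using the geometric input that a $C^1$-small Lagrangian perturbation stays in the same $H$-rationality stratum. Concretely, I would first show that each $\m L(L,\tau)$ is $C^1$-open inside $\bigcup_\tau\m L(L,\tau)$: if $L'$ is $C^1$-close to $L\in\m L(L,\tau)$, then in a Weinstein chart $L'$ is the graph of a small closed 1-form $\alpha$ on $L$; the straight-line family $\{\mathrm{graph}(t\alpha)\}_{t\in [0,1]}$ is then a Lagrangian isotopy from $L$ to $L'$, which extends by the Hamiltonian extension theorem for Lagrangian isotopies to an ambient symplectic isotopy of $M$. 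Since symplectic isotopies preserve the $H$-rationality constant, $L'\in\m L(L,\tau)$ as well. Combining this openness with Corollary~\ref{cor:components-Ltau}, the full quotient $\bigcup_\tau\m L(L,\tau)/\Symp_0(M)$ carries the discrete topology and is therefore Hausdorff.

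For the $\LLag(L)/\Symp_0(M)$ statement, the same graph-in-Weinstein-chart argument shows that any Lagrangian $C^1$-close to an $H$-rational $L_1$ already lies in $\LSympZ(L_1)$, so $\{[L_1]\}$ is $C^1$-open in the quotient and separates $[L_1]$ from every other orbit. Therefore any non-Hausdorff pair of orbits in $\LLag(L)/\Symp_0(M)$ must consist of two $H$-irrational Lagrangians.

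For the $\Ham(M)$ version, the openness above together with Corollary~\ref{cor:C1-flux} makes each $\Symp_0$-orbit $\LSympZ(L)$ a $C^1$-clopen subset of $\bigcup_\tau\m L(L,\tau)$, so it suffices to show that each $\LSympZ(L)/\Ham(M)$ is Hausdorff. For this I would model a $C^1$-neighborhood of $L$ in $\LSympZ(L)$ via the Weinstein chart as a neighborhood of zero in the space of closed 1-forms on $L$, and observe that under this identification the Ham-orbits correspond, via the flux, to equivalence modulo a discrete subgroup of $H^1(L;\R)$~---~the Lagrangian flux group of $L$~---~whose discreteness for $H$-rational $L$ is implicit in the preceding discussion of the Lagrangian $C^1$ flux conjecture. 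The resulting local model is Hausdorff, and global Hausdorffness of $\LSympZ(L)/\Ham(M)$, and of $\LLag(L)/\Ham(M)$ away from $H$-irrational orbits, follows.

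I expect the main subtlety to lie in the Ham case: one must carefully identify the Lagrangian flux group, verify that it parameterizes the Ham-orbits inside $\LSympZ(L)$, and check that its discreteness for $H$-rational $L$ yields Hausdorff separation in the $C^1$ quotient topology. The $\Symp_0(M)$ case is, in contrast, an essentially direct consequence of Corollary~\ref{cor:components-Ltau} once one notes that the strata themselves are $C^1$-open.
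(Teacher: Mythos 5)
There is a genuine gap, and it sits at the very first step. You claim that each stratum $\m L(L,\tau)$ is $C^1$-open in $\bigcup_{\tau}\m L(L,\tau)$, hence that the quotient is \emph{discrete}; this is false, and the purported justification is exactly where the argument breaks. The ``extension theorem'' you invoke does not apply to an arbitrary Lagrangian isotopy: the straight-line family $\{\mathrm{graph}(t\alpha)\}$ extends to an ambient \emph{Hamiltonian} isotopy only when $[\alpha]=0$, and to an ambient \emph{symplectic} isotopy of $M$ only when $[\alpha]$ lies in the image of $H^1(M;\R)\to H^1(L;\R)$ (this is Solomon's Lemma~6.6, which the paper invokes in the proof of Proposition~\ref{prop:H-exactness_flux_plus} precisely after arranging the flux to lie in $W^*$). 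For a general small closed $1$-form $\alpha$ there is no such extension, and indeed there cannot be: take $M=\C$ and $L$ an embedded circle of enclosed area $1$. A circle of area $1+\epsilon$ is arbitrarily $C^1$-close to $L$, is $H$-rational, but has $H$-rationality constant $1+\epsilon$ and is not in $\LSympZ(L)$ (symplectomorphisms preserve the enclosed area). Thus $\m L(S^1,1)$ is not open in $\bigcup_\tau\m L(S^1,\tau)$, the orbit $\LSympZ(L)$ is closed but \emph{not} open in the union, and the quotient $\bigcup_\tau\m L(S^1,\tau)/\Symp_0(\C)\cong(0,\infty)$ is Hausdorff but not discrete. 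This same error propagates to your second and third paragraphs: it is not true that every Lagrangian $C^1$-close to an {\HR} $L_1$ lies in $\LSympZ(L_1)$ (Corollary~\ref{cor:LHam-open} only asserts openness inside the fixed stratum $\m L(L,\tau)$), and $\LSympZ(L)$ is not clopen in the union.

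For comparison, the paper's proof does not assert discreteness of the union quotient. The $\Symp_0(M)$ part is deduced from Corollary~\ref{cor:components-Ltau} (each fixed-$\tau$ quotient is discrete, because orbits are $C^1$-closed by Corollary~\ref{cor:C1-flux} and open \emph{within} $\m L(L,\tau)$ by Corollary~\ref{cor:LHam-open}); the separation of orbits with different constants requires the continuity of the periods rather than openness of the strata. The $\Ham(M)$ part is explicitly said to be finer and to use the local model of Corollary~\ref{cor:H-exactness_flux_plusplus}: near $L$, elements of $\m L(\tau)$ are classified up to Hamiltonian isotopy by the flux of their associated isotopy to an exact Lagrangian, valued in $W^*$, with \emph{equal} (not merely congruent modulo a lattice) fluxes characterizing Hamiltonian equivalence once $r_0<\tau/C$; the local quotient is then an open subset of the Hausdorff space $W^*$. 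Your flux-group heuristic for the $\Ham$ case is in the right spirit, but as written it rests on the false clopenness claim and so does not constitute a proof.
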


The part on $\Symp_0(M)$ follows directly from Corollary~\ref{cor:components-Ltau}. The part with $\Ham(M)$ is a finer result that also makes use of the local description of $L$ in $\m L(L,\tau)$ given by Corollary~\ref{cor:H-exactness_flux_plusplus} below. \par

It has been proven by Ono~\cite{Ono2008} and Solomon~\cite{Solomon2013} that the quotient $\LLag(L)/\Ham(M)$ is Hausdorff in the $C^1$ topology in different settings. Most notably, they both ask that $H_1(L;\R)\to H_1(M;\R)$ be injective, which makes $L$ automatically {\HE}. Corollary~\ref{cor:nonHausdorff-points} shows the difficulty of relaxing the condition that $H_1(L;\R)\to H_1(M;\R)$ be injective: $H$-irrational Lagrangians can create non-Hausdorff points in the quotient. In fact, Theorem~\ref{thm:Counter-Example} shows that in dimension $2n\geq 6$, this always happens. That this is a problem was already mentioned by Ono in his work on the subject. \par

\subsection{Quantitative symplectic topology}
When Theorem~\ref{thm:nbhd-of-H-exactness} holds, it allows for a new measurement associated with a Lagrangian embedding $Q\hookrightarrow M$ with image $L$ and a Riemannian metric $g$ on $Q$:
\begin{align*}
	c^{e}_{(M,L)}(Q,g):=\sup\left\{r\geq 0\ \middle|\ \text{all $L'\in\LHam(L)$ in $\ W_r^g(L)$ are exact} \right\}.
\end{align*}
By writing $\m W_r^g(L)$, we want to underline that it is the image of a Weinstein neighbourhood $\Psi:D^*_r Q\to \m W_r(L)$, where the radius $r$ of the codisk bundle is computed using $g$. We write $c^e_{(M,L)}(Q,g)=0$ if $L$ has no neighbourhood of local exactness, e.g.\ for the example given by Theorem~\ref{thm:Counter-Example}. \par

Note that $c^{e}_{(M,L)}(Q,g)$ is invariant under symplectomorphisms, so it is truly a symplectic quantity. Furthermore, $c^e_{(M,L)}(Q,g)$ is bounded from above by the size of the largest Weinstein neighbourhood of $L$ in $M$, i.e.\ by the relative capacity
\begin{align*}
	c^{\m W}_{(M,L)}(Q,g):=\sup\left\{r> 0\ \middle|\ L \text{ admits a neighbourhood } \m W_r^g(L)\right\}.
\end{align*}
This can in turn be bounded in terms of Poisson bracket invariants of $L$ in $M$~\cite{MembrezOpshtein2021}. \par

\medskip
When $L=S^1$, a direct computation gives the following estimate.

\begin{cor} \label{cor:capacity_circle}
	Let $L$ be a closed curve in a surface $M$. If $L$ bounds an embedded disk, let $A$ be the smallest area of such a disk. If there are no such disks, we set $A=+\infty$. We have that
	\begin{align*}
		c^e_{(M,L)}(S^1,g_0)=\min\left\{\frac{A}{2},c^{\m W}_{(M,L)}(S^1,g_0)\right\},
	\end{align*}
	where $g_0$ is the flat metric so that $S^1$ has length 1.
\end{cor}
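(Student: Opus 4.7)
My plan is to identify $\m W_r^{g_0}(L)$ with the flat annulus $S^1\times(-r,r)$ carrying $\om=dp\wedge dq$ and Liouville primitive $\theta=p\,dq$, so that $\theta|_L=0$, $\mathrm{area}(\m W_r(L))=2r$, and a curve $L'\subseteq \m W_r(L)$ is exact in $\m W_r(L)$ iff $\int_{L'}\theta=0$. I prove the two inequalities separately.

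For the lower bound $c^e\geq\min\{A/2,c^{\m W}\}$, I fix $r$ strictly below this minimum and let $L'=\phi(L)\in\LHam(L)\cap \m W_r(L)$. Since $\phi$ is area-preserving, the minimum embedded-disk area of $L'$ in $M$ equals $A$ (with $A=+\infty$ meaning no embedded disk). If $L'$ were nullhomotopic in the annulus, it would bound an embedded disk of area $\leq 2r$ in $M$, contradicting either $A<\infty$ combined with $2r<A$, or $A=+\infty$. So $L'$ is essential in $\m W_r(L)$ and cobounds a 2-chain $C\subseteq \m W_r(L)$ with $L$ satisfying $\int_C\om=\int_{L'}\theta$ by Stokes, with $|\int_C\om|\leq 2r$. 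I then build a closed 2-cycle $\Sigma$ in $M$ whose $\om$-area is $\int_{L'}\theta$: set $\Sigma:=D+C-\phi(D)$ when $A<\infty$, with $D$ the area-$A$ disk bounded by $L$, or $\Sigma:=C-T$ when $A=+\infty$, with $T$ the trace of a Hamiltonian isotopy from $L$ to $L'$ (which has vanishing $\om$-area). Hence $\int_{L'}\theta\in\om(H_2(M;\Z))$, which is $\{0\}$ for $M$ non-compact and $T_M\Z$ for $M$ closed of total area $T_M$. Minimality of $A$ forces $A\leq T_M/2$ in the closed case, so $|\int_{L'}\theta|\leq 2r<A\leq T_M/2<T_M$; when $A=+\infty$, $r<c^{\m W}\leq T_M/2$ yields $2r<T_M$ similarly. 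Either way, $\int_{L'}\theta=0$.

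For the upper bound, $c^e\leq c^{\m W}$ is automatic, and the case $A=+\infty$ leaves nothing to prove. Otherwise, for each $r\in(A/2,c^{\m W}]$ I choose $h\in(A,2r)$ and $w=A/h\in(0,1)$, and smooth the rectangle $(0,w)\times(-h/2,h/2)\subseteq S^1\times(-r,r)$ into an embedded smooth disk $D'\subseteq \m W_r(L)$ of area exactly $A$. Then $L':=\partial D'$ satisfies $\int_{L'}\theta=A\neq 0$ by Stokes, and $L'\in\LHam(L)$ by the classical transitivity of $\Ham(M)$ on simple closed curves of a surface bounding matching embedded-disk areas. This yields $c^e\leq r$ for every $r>A/2$, so $c^e\leq A/2$.

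The main conceptual step is the essential case of the lower bound: Hamiltonian invariance of areas does not directly give exactness of $L'$ in $\m W_r(L)$, and one must translate the ambient area comparison into an $H_2$-statement via the 2-cycle $\Sigma$. The one technicality to watch in the upper bound is the Moser/Arnold-type transitivity on surfaces: for non-compact surfaces and the sphere it is immediate, while on higher-genus closed surfaces one should additionally check that the isotopy between the two nullhomotopic curves $L$ and the constructed $L'$ can be realized with trivial flux.
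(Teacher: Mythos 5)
Your argument is correct and runs along essentially the same lines as the computation the authors have in mind (compare Appendix~A: first rule out contractibility of $L'$ in the annulus by an area comparison, then upgrade $H$-exactness to exactness by closing up the chain $C$ with the trace of the Hamiltonian isotopy, whose flux vanishes). Your lower bound is in fact tuned to the sharp constant $A/2$ rather than the constant $\tau/2$ of Appendix~A~---~which is what the stated equality requires, since $\tau$ can be strictly smaller than $A$ on a closed surface~---~and you supply the upper-bound construction that the paper leaves implicit. Two remarks. First, the claim that ``minimality of $A$ forces $A\leq T_M/2$'' is false on a closed surface of positive genus: a contractible curve can bound a disk of area greater than half the total area, and that disk is then the \emph{only} embedded disk it bounds. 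This is harmless, because the inequality you actually need, $2r<\mathrm{area}(M)$, already follows from $r<c^{\m W}_{(M,L)}(S^1,g_0)$ together with $2c^{\m W}_{(M,L)}(S^1,g_0)\leq\mathrm{area}(M)$, exactly as you argue in the $A=+\infty$ case; you should delete the false step and use that route uniformly. Second, the flux point you flag in the transitivity step does need a sentence: after producing a symplectic isotopy of $M$ carrying the area-$A$ disk $D$ bounded by $L$ onto the constructed disk $D'$ (Moser), one corrects the flux by composing with the isotopy generated by a closed $1$-form supported in $M\setminus D'$; since $H^1(M,D';\R)\to H^1(M;\R)$ is surjective (as $H^1(D';\R)=0$), every flux class is realized by such an isotopy, which fixes $L'$ pointwise, so the corrected isotopy is Hamiltonian and still sends $L$ to $L'$.
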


Note that $\frac{A}{2}$ is precisely half the radius of the largest Weinstein neighbourhood of the circle $T(A)$ enclosing area $A$ in $\C$, i.e.\ $c^e_{(\C,S^1(A))}(S^1,g_0)=\frac{1}{2}c^{\m W}_{(\C,S^1(A))}(S^1,g_0)$. \par

In general, however, it is hard to get an estimate on $c^e_{(M,L)}(Q,g)$, as it is hard to get one on the neighbourhood for which Theorem~\ref{thm:nbhd-of-H-exactness} holds. One exception to this is when $Q=K$ is the Klein bottle: in this case, the theorem holds on every Weinstein neighbourhood (see Theorem~\ref{thm:Klein-bottles_in_cotangent-bundles} below, which is equivalent to $c_K (T^*K)=0$). Therefore, the bound comes only from the proof of Theorem~\ref{thm:from-H-exact-to-exact}~---~more precisely, from Lemma~\ref{lem:values_Liouville-form} and Proposition~\ref{prop:estimate_taut-form}. In particular, we have the following bound.
\begin{cor}
	Let $L$ be a {\HR} Lagrangian Klein bottle with {\HR ity} constant $\tau$. We have that
	\begin{align*}
		c^e_{(M,L)}(K,g)\geq \min\left\{\frac{\tau}{\ell_g^{\min}(\beta)},c^{\m W}_{(M,L)}(K,g)\right\},
	\end{align*}
	where $\ell_g^{\min}(\beta)$ denotes the minimal length in $g$ of a curve representing the generator $\beta$ of the free factor of $H_1(K;\Z)=\Z\oplus\Z_2$.
\end{cor}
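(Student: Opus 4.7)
The plan is to show that for every $r < \min\{\tau/\ell_g^{\min}(\beta),\, c^{\m W}_{(M,L)}(K,g)\}$, any $L' \in \LHam(L)$ contained in $\m W_r^g(L)$ is exact there. Taking the supremum over such $r$ will then yield the stated lower bound on $c^e_{(M,L)}(K,g)$. The condition $r < c^{\m W}_{(M,L)}(K,g)$ ensures that the Weinstein neighbourhood $\m W_r^g(L) \cong D^{*}_{r}K$, equipped with its canonical Liouville form $\lambda$, actually exists. The first step is then to invoke Theorem~\ref{thm:Klein-bottles_in_cotangent-bundles}~---~the Klein bottle version of Theorem~\ref{thm:nbhd-of-H-exactness}, which holds on every Weinstein neighbourhood~---~to obtain that $L'$ is $H$-exact in $\m W_r^g(L)$.

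Since $L' \cong K$ satisfies $H_1(L';\Z) = \Z \oplus \Z_2$, its first real cohomology is one-dimensional and detected by integration over a $1$-cycle $\gamma \subset L'$ representing $\beta$; hence $\lambda|_{L'}$ is exact (so that $L'$ is exact in $\m W_r^g(L)$) if and only if the single period $\int_\gamma \lambda|_{L'}$ vanishes. The heart of the proof will be to bound this period by $r \cdot \ell_g^{\min}(\beta)$ on the one hand, and to show it must lie in the discrete subgroup $\tau \Z \subset \R$ on the other, so that the strict inequality $r \cdot \ell_g^{\min}(\beta) < \tau$ forces its vanishing. The upper bound is provided by Lemma~\ref{lem:values_Liouville-form} and Proposition~\ref{prop:estimate_taut-form}, which together amount to the pointwise estimate $|\lambda(v)| \leq r |v|$ on $D^{*}_{r}K$ combined with a cylinder $C \subset \m W_r^g(L)$ satisfying $\partial C = \gamma - \gamma_L$, where $\gamma_L \subset L$ is a minimal-length representative of $\beta$. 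Stokes' theorem then gives $\int_\gamma \lambda|_{L'} = \int_C \omega$ (using that $\lambda$ vanishes on the zero-section $L$), and the pointwise estimate yields $\left| \int_\gamma \lambda|_{L'} \right| \leq r \cdot \ell_g^{\min}(\beta)$.

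I expect the main obstacle to be the complementary integrality statement $\int_\gamma \lambda|_{L'} \in \tau \Z$, which is where the Weinstein-neighbourhood computation must be linked to the global $H$-rationality of $L$. For this, I would fix a Hamiltonian isotopy $\{\phi_t\}$ taking $L$ to $L'$ with $\gamma = \phi_1(\gamma_L)$, and consider the cylinder $\Sigma = \{\phi_t(\gamma_L)\}_{t \in [0,1]} \subset M$. The standard flux computation yields $\int_\Sigma \omega = 0$, since $\gamma_L$ is closed and the isotopy is Hamiltonian. As $\partial C = \partial \Sigma = \gamma - \gamma_L$, the difference $C - \Sigma$ is an absolute $2$-cycle in $M$, and the $H$-rationality of $L$ forces $\int_{C - \Sigma} \omega \in \tau \Z$; combined with $\int_\Sigma \omega = 0$, this gives $\int_\gamma \lambda|_{L'} = \int_C \omega \in \tau \Z$, completing the argument.
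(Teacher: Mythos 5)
Your overall route is the paper's: the authors' own justification of this corollary is precisely the sentence preceding it, namely that Theorem~\ref{thm:Klein-bottles_in_cotangent-bundles} gives {\HE}ness on \emph{every} Weinstein neighbourhood, and that the passage to exactness then only costs the constraint coming from Lemma~\ref{lem:values_Liouville-form} and Proposition~\ref{prop:estimate_taut-form}. Your assembly of these three ingredients --- $H$-exactness for free, reduction to the single period over a curve $\gamma$ projecting to $\beta$ since $b_1(K)=1$, the bound $r\,\ell_g^{\min}(\beta)$ from Proposition~\ref{prop:estimate_taut-form}, and the integrality $\int_\gamma\lambda|_{L'}\in\tau\Z$ from the flux/gluing argument of Lemma~\ref{lem:values_Liouville-form}(a) --- is exactly the intended proof, and your closed-cycle variant of the integrality step is equivalent to the paper's relative-class version.

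One caveat: your parenthetical account of \emph{why} the upper bound holds is not a valid derivation and you should not present it as ``what Proposition~\ref{prop:estimate_taut-form} amounts to.'' Stokes applied to a cylinder $C$ with $\del C=\gamma-\gamma_L$ gives $\int_\gamma\lambda|_{L'}=\int_C\omega$, but nothing bounds the area of $C$; and the pointwise estimate $|\lambda(v)|\leq r|v|$ applied along $\gamma\subseteq L'$ only yields $r\cdot\mathrm{length}(\gamma)$, where $\gamma$ lives on $L'$ and has uncontrolled length --- neither gives $r\,\ell_g^{\min}(\beta)$, which is measured on the base $L$. The actual content of Proposition~\ref{prop:estimate_taut-form} is the graph-selector argument (Paternain--Polterovich--Siburg together with Amorim--Oh--Dos Santos/Guillermou): one shows $[\lambda_0|_{L'}]=[\sigma]$ for a closed one-form $\sigma$ on $L$ with $\sup$-norm less than $r$, and only then does evaluating on a minimal geodesic representative of $\beta$ in $L$ give the bound. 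Since you invoke the proposition by name, your proof stands, but in a self-contained write-up the naive Stokes gloss would be a genuine gap.
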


\begin{rem} \label{rem:variations_capacity}
	There are of course many variations of $c^e_{(M,L)}(Q,g)$ that one could take. For example, one could be interested in $c^A_{(M,L)}(Q,g)$ or $c^B_{(M,L)}(Q,g)$, the largest neighbourhood on which Speculation~\ref{conj:strong} or Speculation~\ref{conj:weak}, respectively, holds. However, if one believes in the NLC, then we should always have $c^A=c^e$. Moreover, we have not found an example where $c^B\neq c^{\m W}$. Therefore, $c^e$ seems to be the more fruitful version of the relative capacity.
\end{rem}

% --------------------------------------------------------------------------------
%  APPENDIX
% --------------------------------------------------------------------------------

\appendix
\renewcommand{\thesection}{{\Alph{section}}}
\renewcommand{\thesubsection}{\Alph{section}.\arabic{subsection}}
\titleformat{\section}[block]{\large\bfseries}{Appendix \Alph{section}:}{0.5em}{}
\titleformat{\subsection}[block]{\bfseries}{\Alph{section}.\arabic{subsection}}{0.5em}{}
%The first 2 renewcommand change the numbering in references, while the latter titleformat hard incode the proper layout of the title. I've also changed the numbering within the appendices (in the preamble) to reflect this.
	
\section{Lagrangian Klein bottles in cotangent bundles} \label{sec:klein-bottles}
In this section we present a direct proof of a particular case of Conjecture~\ref{conj: r Vit} where $L$ is the Klein bottle $K$. Even though this case is already covered by Theorem~\ref{thm:conjecture}, the methods used might be of independent interest. The proof relies on the deep fact that there is no Lagrangian Klein bottle in $\C^2$~\cite{Shevchishin2009, Nemirovski2009}. \par

%We now focus our efforts on the case of Conjecture~\ref{conj: r Vit} where $L$ is the Klein bottle $K$. {This case is already covered by Theorem~\ref{thm:conjecture}, but we give a more direct, stronger proof of it, which is of independent interest. 

\begin{thm} \label{thm:Klein-bottles_in_cotangent-bundles}
	Every Lagrangian Klein bottle in $T^*K$ is {\HE}. In other words, $c_K (T^*K)=0$.
\end{thm}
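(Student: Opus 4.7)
My plan is to lift the problem to the orientation double cover of $K$ and invoke the theorem of Shevchishin and Nemirovski \cite{Shevchishin2009, Nemirovski2009} asserting that there is no Lagrangian Klein bottle in $\C^2$.

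First, let $p: \T^2 \to K$ be the orientation double cover, and let $\tilde p: T^*\T^2 \to T^*K$ be the induced symplectic double cover. Given a Lagrangian Klein bottle $L \subset T^*K$, set $\tilde L := \tilde p^{-1}(L)$. Being Lagrangian is a local symplectic condition, so $\tilde L$ is a closed Lagrangian submanifold of $T^*\T^2$, and each of its connected components is either a 1:1 cover of $L$, hence a Klein bottle, or a connected 2:1 cover of $L$, hence a torus (the orientation double cover being the unique non-trivial double cover of $K$). Since $\tilde L$ is compact, it sits inside some $D^*_a \T^2$ for $a$ large. Using the symplectomorphism $D^*_a S^1 \cong D^2(2a) \setminus \{0\}$ from Section~\ref{subsec:proof_thm-conj-torus}, I obtain an exact symplectic embedding $D^*_a \T^2 \hookrightarrow \C^2$, so $\tilde L$ may be viewed as a closed Lagrangian submanifold of $\C^2$. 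By Shevchishin--Nemirovski, no component of $\tilde L$ can be a Klein bottle. Therefore $\tilde L$ is connected and diffeomorphic to the torus $\T^2$.

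Next, I would argue that $L$ is {\HE} in $T^*K$ if and only if $\tilde L$ is {\HE} in $T^*\T^2$. The forward direction is immediate by naturality of $\omega$ under pullback since $\tilde p^* \omega_0^K = \omega_0^{\T^2}$. For the converse, I would use the transfer map $\mathrm{tr}: H_2(T^*K, L; \Z) \to H_2(T^*\T^2, \tilde L; \Z)$ associated to the 2:1 cover, which satisfies $\tilde p_* \circ \mathrm{tr} = 2 \cdot \mathrm{id}$; thus $\omega^K(2u) = \omega^{\T^2}(\mathrm{tr}(u))$ for every $u \in H_2(T^*K, L; \Z)$, and since $\omega^{\T^2}(\mathrm{tr}(u)) = 0$ by assumption, we deduce $\omega^K(u) = 0$.

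The remaining step, which I expect to be the main obstacle, is to show that the Lagrangian torus $\tilde L \subset T^*\T^2$ is itself {\HE}. By the central lemma (Lemma~\ref{lemma:central}), this amounts to showing that the projection $\pi|_{\tilde L}: \tilde L \to \T^2$ is a homotopy equivalence, or equivalently that $[\lambda_0|_{\tilde L}] \in H^1(\tilde L;\R)$ is pulled back from $H^1(\T^2;\R)$. Here I would exploit two pieces of extra structure: first, $\tilde L$ is invariant under the free symplectic involution $\tilde\tau$ (the deck transformation) with quotient the Klein bottle $L$, and since $\tilde\tau^*\lambda_0 = \lambda_0$ the class $[\lambda_0|_{\tilde L}]$ is $\tilde\tau$-invariant, pinning it down to a one-dimensional subspace of $H^1(\tilde L;\R) = \R^2$; second, $\pi$ is $(\tilde\tau,\tau)$-equivariant, so $(\pi|_{\tilde L})^*$ carries $\tau$-invariants to $\tilde\tau$-invariants. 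Combining this equivariant picture with the constraints forced by $\tilde L$ being a Lagrangian torus in $\C^2$ (in particular, Gromov's nonexistence of weakly exact Lagrangian tori), I would aim to identify the $\tilde\tau$-invariant line containing $[\lambda_0|_{\tilde L}]$ with $\mathrm{Im}((\pi|_{\tilde L})^*)$, thereby obtaining {\HE ness}.
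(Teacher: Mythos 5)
Your opening step coincides with the paper's: lift to the orientation double cover, observe that $\tilde L=\tilde p^{-1}(L)$ is a closed Lagrangian in $T^*\T^2$, embed into $\C^2$, and invoke Shevchishin--Nemirovski to exclude Klein bottle components, so that $\tilde L$ is a connected torus. Your reduction ``$L$ is {\HE} iff $\tilde L$ is {\HE}'' (pushforward one way, transfer the other) is also correct, though it is a detour the paper does not take. The genuine gap is in your final step. Deck-invariance does place $[\lambda_0|_{\tilde L}]$ in the one-dimensional $\tilde\tau$-invariant subspace of $H^1(\tilde L;\R)$, but that subspace is contained in $\mathrm{Im}\big((\pi|_{\tilde L})^*\big)$ only if the map induced by $i:L\hookrightarrow T^*K$ on the free parts of $H_1(\,\cdot\,;\Z)$ is nonzero, and nothing in your sketch establishes this. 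Gromov's theorem is not the right tool: it only says that $\tilde L$ bounds a disk of positive area in $\C^2$, i.e.\ $[\lambda_0^{\C^2}|_{\tilde L}]\neq 0$, and since the embedding $D^*_a\T^2\hookrightarrow\C^2$ is exact only up to the closed form $a(dq_1+dq_2)$, this nonvanishing can be carried entirely by the value of $\lambda_0$ on the invariant class rather than by the projection. So ``I would aim to identify the invariant line with $\mathrm{Im}((\pi|_{\tilde L})^*)$'' is precisely the statement that still requires proof.

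The missing ingredient --- which is what the paper's proof consists of after the Shevchishin--Nemirovski input --- is a covering-space computation, not a holomorphic-curve one. Write $\pi_1(T^*K)=\langle a,b\mid ab=b^{-1}a\rangle$, $\pi_1(L)=\langle a',b'\mid a'b'=(b')^{-1}a'\rangle$, $i_*(a')=a^kb^\ell$, $i_*(b')=a^mb^n$; the cover $T^*\T^2\to T^*K$ corresponds to the subgroup $\langle a^2,b\rangle$, and the lifting criterion translates ``$\tilde p^{-1}(L)$ is the connected torus double cover of $L$'' into ``$m$ is even and $k$ is odd''. Since $a$ and $a'$ generate the free parts of $H_1(T^*K;\Z)=\Z\oplus\Z_2$ and $H_1(L;\Z)=\Z\oplus\Z_2$, the fact that $k\neq 0$ gives injectivity of $H_1(L;\R)\to H_1(T^*K;\R)$, hence vanishing of the boundary map $H_2(T^*K,L;\R)\to H_1(L;\R)$ and {\HE}ness of $L$ directly, with no need to return upstairs. (If you insist on working upstairs, the same fact $k\neq 0$ is exactly what makes the invariant line land in $\mathrm{Im}((\pi|_{\tilde L})^*)$.) Incidentally, your parenthetical claim that the torus is the unique non-trivial double cover of $K$ is false --- there are two further connected double covers which are again Klein bottles --- but this is harmless, since Shevchishin--Nemirovski excludes those as well.
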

\begin{pr*}
	Let $L$ be a Lagrangian Klein bottle in $T^*K$. We equip $K$ and the 2-torus $\T^2$ with the flat metric, so that the covering $p:\T^2\to K$ is a local isometry. By rescaling if necessary, we can suppose that $L\subseteq D^*_r K$ for $r$ arbitrarily small. In particular, we may choose $r$ small enough so that there exists a Weinstein neighbourhood $\Psi:D^*_r \T^2\to \C^2$ of the standard Clifford torus $S^1\times S^1$. \par
			
	Using the flat metric on $\T^2$ and $K$, the 2:1 covering $p:\T^2\to K$ lifts to another 2:1 covering $\widetilde{p}:T^*\T^2\to T^*K$ which is also a local isometry and symplectomorphism. Therefore, $\widetilde{L}:=\widetilde{p}^{-1}(L)$ must be a (possibly disconnected) Lagrangian submanifold of $D^*_r\T^2$. Since $\widetilde{p}|_{\widetilde{L}}$ is also a 2:1 covering, $\widetilde{L}$ must either be two disconnected copies of a Klein bottle or a 2-torus. However, if the former was the case, then each connected component of $\Psi(\widetilde{L})$ would be a Lagrangian Klein bottle in $\C^2$, which does not exist~\cite{Shevchishin2009, Nemirovski2009}. Therefore, $\widetilde{L}$ must be a 2-torus. In other words, the composition
	\begin{center}
		\begin{tikzcd}
			\T^2 \arrow[r,twoheadrightarrow,"2:1"] & L \arrow[r,hookrightarrow,"i"] & T^*K
		\end{tikzcd}
	\end{center}
	admits a lift to $T^*\T^2$, but the composition
	\begin{center}
		\begin{tikzcd}
			K \arrow[r,"\sim"] & L \arrow[r,hookrightarrow,"i"] & T^*K
		\end{tikzcd}
	\end{center}
	does not. \par
			
	We now interpret these statements in algebraic terms. To do so, we first look at the fundamental groups $\pi_1(T^*K)=\langle a,b| ab=b^{-1}a\rangle$ and $\pi_1(L)=\langle a',b'| a'b'=(b')^{-1}a'\rangle$. With these presentations, the subgroups associated to the coverings $T^*\T^2\to T^*K$ and $\T^2\to L$ are those generated by $\{a^2,b\}$ and $\{(a')^2,b'\}$, respectively. Denote $i_*(a')=a^kb^\ell$ and $i_*(b')=a^mb^n$. Here, we have made use of the presentation above to conclude that any element of $\pi_1(T^*K)$ can be written in that way. Given the lifting criterion for coverings, the fact that the composition $\T^2 \to L \to T^*K$ admits a lift is equivalent to $m$ being even. Indeed, we have that
	\begin{align*}
		i_*\left((a')^2\right)=\left(i_*(a')\right)^2=a^{2k}b^{(1+(-1)^k)\ell},
	\end{align*}
	so that this element always admits a lift to $T^*\T^2$. In turn, this forces $k$ to be odd, since the composition $K \to L \to T^*K$ does not admit a lift. In particular, $k$ is nonzero. But $a$ generates the free factor and $b$ the torsion factor of $H_1(T^*K;\Z)=\Z\oplus\Z_2$ under the Hurewicz morphism (and analogously for $a'$ and $b'$ in $H_1(L;\Z)$). Therefore, $i$ induces a monomorphism $i_*:H_1(L;\Z)^{\mathrm{free}}\to H_1(T^*K;\Z)^{\mathrm{free}}$ between the free part of the homologies. But then $i_*: H_1(L;\R)\to H_1(T^*K;\R)$ is also injective. By the long exact sequence in homology, this implies that the boundary map $\del: H_2(T^*K,L;\R)\to H_1(L;\R)$ is zero. Since $\omega_0(H_2(T^*K,L))=\lambda_0(\del (H_2(T^*K,L)))$, $L$ must be {\HE}. 
\end{pr*}

\vfill
\pagebreak

\bibliographystyle{alpha}
\bibliography{Rational_exactness,lemmasrefs}
\end{document}